\def\@@and{\MakeLowercase{and}}
\theoremstyle{definition}
\newtheorem{defn}{Definition}[section]
\newtheorem{exam}[defn]{Example}
\newtheorem{question}[defn]{Question}
\newtheorem{rem}[defn]{Remark}
\theoremstyle{plain}
\newtheorem{thm}[defn]{Theorem}
\newtheorem{lem}[defn]{Lemma}
\newtheorem{prop}[defn]{Proposition}
\newtheorem{coro}[defn]{Corollary}
\newcommand{\calpfg}{\mathcal{P}_f(G)}
\newcommand{\calpg}{\mathcal{P}(G)}
\newcommand{\ft}{\mathcal{F}_{\textup{t}}}
\newcommand{\fs}{\mathcal{F}_{\textup{s}}}
\newcommand{\fip}{\mathcal{F}_{\textup{ip}}}
\newcommand{\fps}{\mathcal{F}_{\textup{ps}}}
\newcommand{\finf}{\mathcal{F}_{\textup{inf}}}
\newcommand{\fpud}{\mathcal{F}_{\textup{pud}}}
\newcommand{\fpubd}{\mathcal{F}_{\textup{pubd}}}
\newcommand{\fcen}{\mathcal{F}_{\textup{cen}}}
\newcommand{\bbn}{\mathbb{N}}
\newcommand{\bbnz}{{\mathbb{N}_0}}
\newcommand{\calf}{\mathcal{F}}
\newcommand{\calb}{\mathcal{B}}
\DeclareMathOperator{\cl}{cl}
\title[R\MakeLowercase{eturn time sets and product recurrence}] 
{R\MakeLowercase{eturn time sets and product recurrence}}
\author[J. L\MakeLowercase{i}]{J\MakeLowercase{ian} L\MakeLowercase{i}}
\address[J. Li]{Institute for  Mathematical Sciences and Artificial Intelligence \& Department of Mathematics,
	Shantou University, Shantou, 515821, Guangdong, China}
\email{lijian09@mail.ustc.edu.cn}
\urladdr{https://orcid.org/0000-0002-8724-3050}
\author[X. L\MakeLowercase{iang}]{X\MakeLowercase{ianjuan} L\MakeLowercase{iang}}
\address[X. Liang]{Department of Mathematics,
	Yunnan Normal University, Kunming, 650000, Yunnan, China}
\email{liangxianjuan@outlook.com}
\urladdr{https://orcid.org/0000-0003-1970-3809}
\author[Y. Y\MakeLowercase{ang}]{Y\MakeLowercase{ini} Y\MakeLowercase{ang}}
\address[Y. Yang]{Department of Mathematics,
	Shantou University, Shantou, 515821, Guangdong, China}
\email{ynyangchs@foxmail.com}
\urladdr{https://orcid.org/0000-0001-6564-2213}
\subjclass[2020]{Primary: 37B20; Secondary: 37B05}
\keywords{Return time set, product recurrence,  quasi-central set, piecewise syndetic set, the Stone-\v{C}ech compactification}
\date{\today}
\begin{document}

\begin{abstract}
Let $G$ be a countable infinite discrete group.
We show that a subset $F$ of $G$ contains a return time set of some 
piecewise syndetic recurrent point $x$ in a compact Hausdorff space $X$ with a $G$-action if and only if $F$ is a quasi-central set.
As an application, we show that if a nonempty closed subsemigroup $S$ of the Stone-\v{C}ech compactification $\beta G$ contains the smallest ideal $K(\beta G)$ of $\beta G$ then $S$-product recurrence is equivalent to distality, 
which partially answers a question of Auslander and Furstenberg (Trans. Amer. Math. Soc. 343, 1994, 221--232).
\end{abstract}

\maketitle

\section{Introduction}

By a topological dynamical system, we mean a pair $(X,T)$,
where $X$ is a compact metric space with a metric $d$ and
$T\colon X\to X$ is a continuous map.
The study of recurrence is one of the central topics in topological dynamics.
For a point $x\in X$ and a subset $U$ of $X$, 
the return time set of $x$ to $U$ (In this paper, “neighborhood” always signifies an open neighborhood) is 
\[
N(x,U)=\{n\in\bbnz\colon T^n x\in U\},
\]
where $\bbn_0$ denote the collection of
non-negative integers.
Recurrent time sets are closely associated with 
the combinatorial property of the sets of non-negative integers. 
In the seminal monograph \cite{F81},  
Furstenberg characterized the return time sets of a recurrent point via IP-subsets of $\bbnz$ which is defined combinatorially.
Recall that a point $x\in X$ is called recurrent if for every neighborhood $U$ of $x$, the recurrent time set $N(x,U)$ is infinite, and a subset $F$ of $\bbnz$ is called an IP-set if there exists a sequence 
$\{p_i\}_{i=1}^\infty$ in $\bbnz$ such that the
finite sum $FS(\{p_i\}_{i=1}^\infty)$ of $\{p_i\}_{i=1}^\infty$ is infinite and contained in $F$, where
\[
FS(\{p_i\}_{i=1}^\infty)= \Bigl\{\sum_{i\in \alpha} p_i \colon \alpha
\text{ is a  nonempty finite  subset of }  \bbn \Bigr\}.
\]

\begin{thm}[{\cite[Theorem 2.17]{F81}}] \label{thm:Fur-recurrent-IP}\ 
	\begin{enumerate} 
		\item 
		Given a topological dynamical system $(X,T)$, if a point $x\in X$ is recurrent, 
		then for any neighborhood $U$ of $x$,  $N(x,U)$ is an IP-set.
		\item If a subset $F$ of $\bbnz$ is an IP-set, 
		then there exists a topological dynamical system $(X,T)$, a recurrent point $x\in X$ and a neighborhood $U$ of $x$
		such that $N(x,U)\subset F\cup\{0\}$.
	\end{enumerate}
\end{thm}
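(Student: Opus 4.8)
The plan is to prove the two implications separately, since they call for quite different techniques.

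For part (1), I would build the finite-sum sequence inside $N(x,U)$ by an inductive neighborhood-shrinking argument. Suppose $p_1<\cdots<p_k$ have been chosen so that every nonempty finite sum from $\{p_1,\dots,p_k\}$ already lies in $N(x,U)$, and let $S_k$ be the finite set of these sums together with $0$; then $T^s x\in U$ for every $s\in S_k$ (using $x\in U$ for $s=0$). For each such $s$, continuity of $T^s$ yields a neighborhood $W_s$ of $x$ with $T^s(W_s)\subseteq U$, and intersecting them gives a neighborhood $W$ of $x$. Since $x$ is recurrent, $N(x,W)$ is infinite, so I may choose $p_{k+1}>p_k$ with $T^{p_{k+1}}x\in W$; then $T^{\,s+p_{k+1}}x=T^s(T^{p_{k+1}}x)\in T^s(W)\subseteq U$ for every $s\in S_k$, so all the new finite sums land in $U$ as well. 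Iterating produces $\{p_i\}$ with $FS(\{p_i\}_{i=1}^\infty)\subseteq N(x,U)$, and the finite sums are infinite in number because the $p_i$ strictly increase.

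For part (2), the key idea is to realize $F$ as the set of coordinates at which a suitable point of a full shift equals $1$. First I telescope: starting from an FS-sequence with $FS(\{n_i\})\subseteq F$, I group consecutive terms into disjoint blocks to obtain new generators $m_1<m_2<\cdots$ with $FS(\{m_k\})\subseteq FS(\{n_i\})\subseteq F$ and with the rapid-growth property $m_{k+1}>2\sum_{l\le k}m_l$ (possible since the partial sums of $\{n_i\}$ diverge). In particular $\sum_{l<k}m_l<m_k$, so every element of $P:=FS(\{m_k\})$ that is below $m_k$ uses only generators of index $<k$. I then take $Z=\{0,1\}^{\bbnz}$ with the shift $\sigma$, a compact metric system, let $z=\mathbf 1_{P\cup\{0\}}$, and set $V=\{y\in Z:y(0)=1\}$, a clopen neighborhood of $z$ since $0\in P\cup\{0\}$. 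By construction $N(z,V)=\{m\in\bbnz:z(m)=1\}=P\cup\{0\}\subseteq F\cup\{0\}$, which is exactly the desired containment.

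It remains to check that $z$ is recurrent, and this is the main obstacle. I would show that shifting by a single generator reproduces the pattern of $P\cup\{0\}$ on an initial window: fix a window length $L$ and any $k$ with $m_k\ge L$, and compare $z(j)$ with $z(m_k+j)$ for $j<L$. If $z(j)=1$ then $j\in FS(\{m_l\}_{l<k})\cup\{0\}$, whence $m_k+j\in FS(\{m_l\}_{l\le k})\subseteq P$ and $z(m_k+j)=1$. Conversely, if $m_k+j\in P$, then since $m_k+j<2m_k<m_{k+1}$ its representation must have top index exactly $k$, forcing $j\in FS(\{m_l\}_{l<k})\cup\{0\}\subseteq P\cup\{0\}$; contrapositively, $z(j)=0$ gives $z(m_k+j)=0$. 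Hence $\sigma^{m_k}z$ agrees with $z$ on $[0,L)$ for all such $k$, so the return time set to every basic neighborhood of $z$ is infinite and $z$ is recurrent. The rapid-growth telescoping is precisely what converts the self-similar structure of the IP-set into genuine recurrence while keeping the return set confined to $P\subseteq F$.
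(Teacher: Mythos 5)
Your proof is correct in both parts, but note that the paper itself never proves this statement: Theorem~\ref{thm:Fur-recurrent-IP} is quoted from \cite{F81}, and what the paper actually proves are generalizations of it to countable discrete groups and general Furstenberg families (Proposition~\ref{prop:rec-sigma}, Theorems~\ref{thm:main2} and~\ref{thm:main3}). Compared with those, your argument is the classical, elementary route. In part (1) you run the neighborhood-shrinking induction (take $W\subset\bigcap_{s\in S_k}(T^s)^{-1}U$ and pick $p_{k+1}\in N(x,W)$), whereas the paper's general version of this direction (Theorem~\ref{thm:main3}(1)) goes through idempotent ultrafilters: an $\calf$-recurrent point satisfies $p\text{-}\lim_{g\in G}gx=x$ for some idempotent $p\in h(\calf)$, hence $N(x,U)\in p$, and the IP conclusion then comes from the algebraic characterization \cite[Theorem 5.12]{HS12}. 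In part (2) both you and the paper realize the prescribed return-time set as the support of a $\{0,1\}$-valued point of a shift space, but your verification of recurrence telescopes the generators so that $m_{k+1}>2\sum_{l\le k}m_l$ and then uses the resulting uniqueness of the top index in a representation of any element of $FS(\{m_k\})$ below $m_{k+1}$; this exploits the ordered additive structure of $\bbnz$ and does not transfer to a general group, which is precisely why the paper replaces it by the more elaborate inductive construction of the points $z^{(i)}\in\{0,1\}^G$ in the proof of Theorem~\ref{thm:main2}, driven by the abstract properties (P1) and (P2). What your approach buys is brevity and self-containedness for $\bbnz$ (it is essentially Furstenberg's original argument); what the paper's machinery buys is the same statement for arbitrary countable infinite discrete groups and for families such as $\fps$, $\fpud$ and $\fpubd$, the present theorem being the special case $\calf=\finf$. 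The only step worth making explicit in your write-up is that infinitely many $n_i$ are nonzero --- this follows from the paper's requirement that $FS(\{n_i\})$ be infinite --- since that is what makes the partial sums diverge and the telescoping possible.
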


Furstenberg introduced the concept of central subsets of $\bbn_0$ and proved
the so-called "Central sets theorem" (see \cite[Proposition 8.21]{F81}), which has many combinatorial consequences. 
For a recent survey on central sets, we refer the reader to \cite{H20}. 
In \cite{HMS96} Hindman et al.\@ introduced the notion of quasi-central sets, and both concepts were further generalized to be applicable to arbitrary semigroups. 
Motivated by Theorem \ref{thm:Fur-recurrent-IP}, 
we characterize the return time sets of a piecewise syndetic recurrent point via quasi-central subsets of $\bbn_0$. 

\begin{thm}\label{thm:main1}
	\begin{enumerate}
		\item Given a topological dynamical system $(X,T)$, if a point $x\in X$ is piecewise syndetic recurrent, then for
		every neighborhood $U$ of $x$, $N(x,U)$ is a quasi-central set;
		\item For any quasi-central
		subset $F$ of $\bbn_0$, 
		there exists a topological system 
		$(X,T)$, a piecewise syndetic  recurrent point
		$x\in X$ and a neighborhood $U$ of $x$
		such that $N(x,U)\subset F\cup \{0\}$.
	\end{enumerate}
\end{thm}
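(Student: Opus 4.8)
The plan is to prove the two directions of Theorem~\ref{thm:main1} by bridging dynamics and combinatorics through the Stone-Čech compactification $\beta G$. The central dictionary is: piecewise syndetic sets correspond to points in the smallest ideal $K(\beta G)$, and quasi-central sets are exactly those belonging to some minimal idempotent's associated filter (equivalently, sets that are members of an idempotent in $\cl K(\beta G)$). So I want to translate "$x$ is piecewise syndetic recurrent" and "$N(x,U)$ is quasi-central" into statements about idempotents and ideals in $\beta G$, where the algebra does the work.

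Let me think about this more carefully.

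For part (1): given a piecewise syndetic recurrent point $x$ and a neighborhood $U$, I want to show $N(x,U)$ is quasi-central. The recurrence of $x$ along a piecewise syndetic return structure should let me produce an idempotent $p$ in $\beta G$ such that $p$ fixes $x$ in the sense that the ultrafilter limit $px = x$, and moreover $p$ lies in $\cl K(\beta G)$. Then for any neighborhood $U$ of $x$, since $px = x \in U$ and $U$ is open, the set $\{g : gx \in U\} = N(x,U)$ must be a member of $p$. Since $p$ is a minimal idempotent (or an idempotent in the closure of the smallest ideal), membership in $p$ is precisely the dynamical/combinatorial criterion for quasi-centrality via the Burns-Hindman characterization. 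The key construction is the idempotent: I would pass to the orbit closure, use that piecewise syndetic recurrence gives a point whose "return structure" meets $K(\beta G)$, and extract a minimal idempotent fixing $x$.

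For part (2): given a quasi-central set $F$, I need to build a concrete $G$-system realizing it. The natural candidate is the universal one: take $X = \beta G$ (or a suitable subsystem / factor), with the $G$-action by left translation $g \cdot p = gp$ extended to $\beta G$, and let $x = e$ be the point of $G \subset \beta G$ corresponding to the identity, or rather take $x$ to be a minimal idempotent witnessing quasi-centrality of $F$. Since $F$ is quasi-central, by the combinatorial characterization there is a minimal idempotent $p$ with $F \in p$. I would set $X = \cl\{gp : g \in G\}$, the orbit closure of $p$ under left translation, and take $x = p$. Because $p$ is a minimal idempotent, $x$ is a minimal point, hence in particular piecewise syndetic recurrent. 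The neighborhood $U$ should be chosen as the clopen set $\overline{F} \cap X$ (the closure of $F$ inside $\beta G$ intersected with $X$), so that $N(x,U) = \{g : gp \in \overline{F}\}$, and membership $F \in p$ translates exactly into $g^{-1}F \in p$ for the relevant $g$, yielding $N(x,U) \subset F \cup \{e\}$.

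**The main obstacle**, I expect, is part (1): producing the minimal idempotent that simultaneously fixes $x$ and lies in the closure of the smallest ideal, starting only from the hypothesis of piecewise syndetic recurrence. One must be careful that the maps $p \mapsto px$ on $X$ are not continuous, so the usual Ellis-semigroup argument (where the enveloping semigroup acts continuously on the right) needs to be adapted to the $\beta G$-action, and the precise form of "piecewise syndetic recurrent" has to be matched against the dynamical characterization of quasi-central sets of Burns and Hindman. Verifying that the return set lands in the idempotent, and that the idempotent can be taken \emph{minimal}, is the delicate point; the converse direction (2) is more routine once the universal realization is set up, since there the algebraic structure of $\beta G$ is used positively to hand back the required system.
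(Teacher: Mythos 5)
Part (1) of your proposal is essentially the paper's own argument: an $\fps$-recurrent point is fixed (in the $p$-limit sense) by some idempotent $p\in h(\fps)=\cl_{\beta G}K(\beta G)$ (this is Lemma~\ref{lem:f-rec-idempotent} combined with \cite[Corollary 4.41]{HS12}), and then $N(x,U)\in p$ for every neighborhood $U$ of $x$, which is exactly quasi-centrality. One correction, though: you repeatedly say \emph{minimal} idempotent. You cannot get a minimal idempotent in part (1), and quasi-centrality of $F$ in part (2) does not hand you one either: membership in a minimal idempotent characterizes \emph{central} sets (Theorem~\ref{thm:central-set-idempotent}), whereas quasi-central sets only give idempotents in $\cl_{\beta G}K(\beta G)$, a strictly larger class. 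If your argument really ran with minimal idempotents, it would prove that the return sets in Theorem~\ref{thm:main1} are central, which is false and is precisely why the theorem is stated with quasi-central sets.

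The genuine gap is the last step of part (2). In your universal system $X=\cl_{\beta G}(Gp)$ with $x=p$ and $U=\widehat{F}\cap X$, the return set is $N(p,U)=\{g\in G\colon g^{-1}F\in p\}$, because $gp\in\widehat{F}$ iff $F\in gp$ iff $g^{-1}F\in p$; nothing forces this set to lie inside $F\cup\{e\}$, and your claim that ``membership $F\in p$ translates exactly into $g^{-1}F\in p$ for the relevant $g$'' is exactly the point that fails. Concretely, take $G=\bbz$ and $F=\bigcup_{n\ge 1}[a_n,a_n+n]$ with $a_1\ge 1$, $a_{n+1}>a_n+n+1$ and gaps tending to infinity. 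Then $F$ is thick, hence quasi-central, and $-1\notin F\cup\{0\}$; yet for \emph{every} idempotent $p\in\cl_{\beta G}K(\beta G)$ with $F\in p$ one has $F+1\in p$ (indeed $F\setminus(F+1)=\{a_n\colon n\in\bbn\}$ is not piecewise syndetic, hence not a member of $p$ since every member of $p$ is piecewise syndetic, so $F\cap(F+1)\in p$), i.e.\ $-1\in N(p,U)$. So the required inclusion fails for every admissible choice of $p$. Shrinking the neighborhood to $\widehat{A}\cap X$ with $A\in p$ replaces the return set by $\{g\colon g^{-1}A\in p\}$, and producing an $A\in p$ for which this set lands inside $F\cup\{e\}$ is essentially equivalent to the theorem itself; the algebra of $\beta G$ does not give it to you. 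This is exactly why the paper does not argue this way: it first converts quasi-centrality into the combinatorial condition of Proposition~\ref{prop:ess-rec} (a decreasing sequence $\{F_n\}$ of piecewise syndetic subsets of $F$ with $fF_m\subset F_n$), and then Theorem~\ref{thm:main2} carries out an inductive construction, using properties (P1) and (P2) of $\fps$, of a point of the Bernoulli shift $\{0,1\}^G$ that is $\fps$-recurrent and whose return set to the cylinder $[1]$ is contained in $F\cup\{e\}$. That construction is the actual content of part (2), and your proposal has no substitute for it.
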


The proof of Theorem \ref{thm:main1} is presented in  \ref{section:thm1.2}.
In fact, we will show that a more general version of Theorem~\ref{thm:main1} also holds for $G$-system and some special kinds for recurrence, see Theorem~\ref{thm:main3} for details. 
Recall that a $G$-system is a pair $(X,G)$, where $X$ is a compact Hausdorff space and $G$ is a countable discrete group continuously acting on $X$. 
A key aspect of the proof of Theorem~\ref{thm:main3} is a "purely" combinatorial characterization of the recurrent time sets corresponding to certain specific types of recurrent points, see Theorem \ref{thm:main2}. 

Let $(X,T)$ be a topological dynamical system.
Recall that two points $x,y\in X$ are called proximal if $\liminf_{k\to\infty}d(T^k x,T^k y)=0$,
and a point $x\in X$ is called distal if it is not proximal to any point in its orbit closure other than itself. 
By the well-known Auslander-Ellis theorem 
(see e.g.~\cite[Theorem 8.7]{F81}),
any distal point is uniformly recurrent.
In \cite{F81}, Furstenberg also characterized distal points 
in terms of  recurrent time sets and synchronized recurrence 
with certain types of recurrent points (see \cite{EEN01} and \cite{DLX21} for $G$-systems).
Recall that a subset $F$ of $\bbnz$ is called an IP$^*$-set if for any IP-subset $F'$ of $\bbnz$, $F\cap F'\neq\emptyset$.

\begin{thm}[{\cite[Theorem 9.11]{F81}}] \label{thm:Fur-distal-point}
	Let $(X,T)$ be a topological dynamical system and $x\in X$. 
	Then the following assertions are equivalent:
	\begin{enumerate}
		\item $x$ is distal;  
		\item $x$ is IP$^*$-recurrent, that is, for  any neighborhood $U$ of $x$,  $N(x,U)$ is an IP$^*$-set;
		\item $x$ is product recurrent, that is, for any topological dynamical system $(Y,S)$ and any recurrent point $y\in Y$, $(x,y)$ is recurrent in the product system $(X\times Y,T\times S)$;
		\item for any topological dynamical system $(Y,S)$ and any uniformly recurrent point $y\in Y$, $(x,y)$ is uniformly recurrent in the product system $(X\times Y,T\times S)$.
	\end{enumerate}
\end{thm}

In \cite{AF94}, Auslander and Furstenberg treated directly the action $E\times X\ni(p,x)\mapsto px\in X$ of a compact right topological semigroup $E$ on a compact Hausdorff space $X$. 
It should be noticed that the maps $x\mapsto px$ are often discontinuous for such semigroup actions.
Such an action is referred to as an Ellis action in \cite{AAG08}.
Within this framework the authors of \cite{AAG08} investigated the relationships between dynamics of an action and an algebraic structure of $E$. 
For instance, they obtained several characterizations of distal, semidistal and almost-distal flows for an Ellis action.
The Stone-\v{C}ech compactification $\beta G$ of a discrete group $G$ forms a compact right topological semigroup, and its action constitutes an important example of Ellis action (referred to as a $\beta G$-action). 

Partially motivated by Theorem~\ref{thm:Fur-distal-point}, Auslander and Furstenberg \cite{AF94} introduced the concept of $S$-product recurrence for a closed subsemigroup $S$ of $E$, and showed that under certain conditions, a point is $S$-product recurrent if and only if it is a distal point.
In the end of the paper \cite{AF94}, Auslander and Furstenberg asked the following two questions:

\begin{question}\label{question:AF94-1}
	How to characterize the closed subsemigroups $S$ of a compact right topological semigroup for which an $S$-product recurrent point is  distal?
\end{question}

\begin{question}\label{question:AF94-2}
	If $(x, y)$ is recurrent for any almost periodic point $y$, 
	is $x$ necessarily a distal point?
\end{question}

Question~\ref{question:AF94-2} was answered negatively by Haddad and Ott in \cite{HO08} for topological dynamical systems.
In fact, this question is related to dynamical systems which are disjoint from all minimal systems. 
In \cite{DSY12}, Dong, Shao and Ye studied general product recurrence properties systematically and in \cite{OZ13} Oprocha and Zhang showed that 
if $(x,y)$ is recurrent for any piecewise syndetic recurrent point $y$, then $x$ is a distal point.

Recall that the Stone-\v{C}ech compactification  $\beta G$ of $G$ has a smallest ideal $K(\beta G)$ which is the union of all minimal left ideals of $\beta G$. 
We consider $\beta G$-actions on compact Hausdorff spaces and obtain the following sufficient condition for the closed subsemigroups $S$ of $\beta G$ for which an $S$-product recurrent point is a distal point, partly answering Auslander and Furstenberg's Question~\ref{question:AF94-1}. 

\begin{thm}\label{thm:main-result-S-product-recurrent}
	Let $(X,\beta G)$ be a $\beta G$-action
	and $S$ be a nonempty closed subsemigroup of $\beta G\setminus G$. 
	If $K(\beta G)\subset S$, then
	a point $x\in X$ is distal if and only if $x$ is $S$-product recurrent.
\end{thm}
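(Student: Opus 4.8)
The plan is to prove the two implications separately, using only that $S$ is a closed subsemigroup for the ``distal $\Rightarrow$ $S$-product recurrent'' direction and reserving the hypotheses $\cl_{\beta G}{K(\beta G)}\subset S$ and $S\subset\beta G\setminus G$ for the converse. For the first direction the key observation is that a distal point is fixed by \emph{every} idempotent of $\beta G$: if $p=p^2$, then $p\cdot x=p\cdot(px)$ (both equal $px$), so $x$ and $px$ are proximal, and distality forces $px=x$. Now let $y$ be any $S$-recurrent point in a $\beta G$-system $(Y,\beta G)$, say $qy=y$ with $q\in S$. Since $S$ is a closed subsemigroup it contains the closed subsemigroup generated by $q$, which carries an idempotent $p$ by Ellis' theorem; continuity of $r\mapsto ry$ together with $q^ny=y$ gives $py=y$, and the observation above gives $px=x$. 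Hence $p(x,y)=(x,y)$ with $p\in S$, so $(x,y)$ is $S$-recurrent. As $(Y,y)$ was arbitrary, $x$ is $S$-product recurrent.

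For the converse I would argue by contraposition, so assume $x$ is not distal and construct a $\beta G$-system, an $S$-recurrent point $y$, and neighborhoods witnessing that $(x,y)$ is not $S$-recurrent. Since $x$ is not distal, the standard characterization of distal points gives a minimal idempotent $u\in K(\beta G)$ with $ux=x'\neq x$. Choose disjoint open sets $U\ni x$ and $U'\ni x'$. Because $ux=x'$, the return set $F:=N(x,U')=\{g\in G\colon gx\in U'\}$ lies in the minimal idempotent $u$, so $F$ is a central, hence quasi-central, subset of $G$; moreover $F\cap N(x,U)=\emptyset$ since $U\cap U'=\emptyset$. This is exactly the input required by Theorem~\ref{thm:main1}(2).

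Applying Theorem~\ref{thm:main1}(2) to the quasi-central set $F$ produces a $\beta G$-system $(Y,\beta G)$, a piecewise syndetic recurrent point $y$, and a neighborhood $V\ni y$ with $N(y,V)\subset F\cup\{e\}$. A piecewise syndetic recurrent point is fixed by an idempotent of $\cl_{\beta G}{K(\beta G)}$, which lies in $S$ by hypothesis, so $y$ is $S$-recurrent. Suppose $(x,y)$ were $S$-recurrent; then the nonempty closed subsemigroup $\{p\in S\colon p(x,y)=(x,y)\}$ contains an idempotent $p$, and $N(x,U)\cap N(y,V)=N\bigl((x,y),U\times V\bigr)$ is a member of $p$. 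But $p\in S\subset\beta G\setminus G$ is a free ultrafilter, so each of its members is infinite, whereas $N(x,U)\cap N(y,V)\subset N(x,U)\cap(F\cup\{e\})\subset\{e\}$. This contradiction shows $(x,y)$ is not $S$-recurrent, so $x$ is not $S$-product recurrent, completing the contrapositive.

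The main obstacle is the converse direction, and specifically the manufacture of the point $y$: the algebra of $\beta G$ alone only yields some $p\in S$ fixing both $x$ and $x'$, which is not enough to force $x=x'$, so the argument must exploit the finer combinatorics of piecewise syndetic sets through Theorem~\ref{thm:main1}(2). The delicate points are verifying that the return set $F$ is genuinely quasi-central (so that Theorem~\ref{thm:main1}(2) applies) and that the piecewise syndetic recurrent point produced is fixed by an idempotent of $\cl_{\beta G}{K(\beta G)}$ rather than merely of $K(\beta G)$; the hypothesis $\cl_{\beta G}{K(\beta G)}\subset S$ is tailored precisely to this, while $S\subset\beta G\setminus G$ is what makes $S$-recurrence restrictive enough to detect non-distality via the freeness of the ultrafilter $p$.
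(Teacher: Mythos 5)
Your ``distal $\Rightarrow$ $S$-product recurrent'' direction has a genuine gap at the step that produces an idempotent of $S$ fixing $y$. You invoke ``the closed subsemigroup generated by $q$'' and claim its idempotent $p$ satisfies $py=y$ because $q^ny=y$ and $r\mapsto ry$ is continuous. In a compact \emph{right} topological semigroup left translations are not continuous, so the closure of $\{q^n\colon n\geq 1\}$ need not be a semigroup at all, and Ellis--Namakura cannot be applied to it. If instead you mean the intersection of all closed subsemigroups of $\beta G$ containing $q$, that is indeed a closed semigroup contained in $S$ and it does contain an idempotent $p$; but $p$ need not be a limit of powers of $q$, so continuity of the orbit map together with $q^ny=y$ does not yield $py=y$. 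The repair is exactly the paper's argument: the set $L=\{r\in S\colon ry=y\}$ is nonempty (it contains $q$), closed (because $r\mapsto ry$ is continuous by the definition of a $\beta G$-action), and a subsemigroup of $S$; Ellis--Namakura applied to $L$ gives an idempotent $u\in S$ with $uy=y$, and then your (correct) observation that a distal point is fixed by every idempotent gives $ux=x$, hence $u(x,y)=(x,y)$. Alternatively, note that $\{r\in\beta G\colon ry=y\}$ is itself a closed subsemigroup containing $q$, hence contains your generated subsemigroup and in particular $p$.

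The converse direction is correct and is essentially the paper's proof of Theorem~\ref{thm:action-pro-distal}: from a minimal idempotent moving $x$ one gets a central (hence quasi-central) return-time set disjoint from $N(x,U)$, one manufactures an $\fps$-recurrent point $y$ whose return times to a basic neighborhood lie in that set together with $e$, one observes that $y$ is fixed by an idempotent of $\cl_{\beta G}K(\beta G)\subset S$, and one derives a contradiction from the fact that every member of an ultrafilter in $\beta G\setminus G$ is infinite. The only differences are cosmetic: the paper routes the construction of $y$ through Lemma~\ref{lem:central-lem} and Proposition~\ref{prop:rec-sigma} (landing in the Bernoulli shift) rather than quoting Theorem~\ref{thm:main1}(2), and it proves the stronger statement that a non-distal point fails even \emph{weak} $S$-product recurrence, whereas your contrapositive only refutes $S$-product recurrence --- which suffices for the theorem as stated. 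Also, your use of Ellis--Namakura in the contradiction step is superfluous: any $p\in S$ with $p(x,y)=(x,y)$ already does the job, since its members must be infinite while $N((x,y),U\times V)\subset\{e\}$.
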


As an application, we obtain a characterization of distal points in terms of product recurrence for $G$-systems on compact Hausdorff spaces. It should be noted that some special cases for a topological dynamical system $(X,T)$ were established by Oprocha and Zhang  in \cite{OZ13}. 

\begin{thm}\label{thm:main-result-distal-product-rec}
	Let $G$ be a countable infinite discrete group and 
	$\calf\subset \calpg$ be a Furstenberg family. 
	If $\calf$ has the Ramsey property and the hull of $\calf$, 
	\[h(\calf):=\{p\in\beta G: p\subset\calf\}\]
	is a subsemigroup of $\beta G$ and $\calf\supset \fps$, then for any $G$-system $(X,G)$ and $x\in X$, the following assertions are equivalent:
	\begin{enumerate}
		\item $x$ is distal;
		\item $x$ is $\calf$-product recurrent, that is, for any $G$-system $(Y,G)$ and any $\calf$-recurrent point $y\in Y$, $(x,y)$ is recurrent in the product system $(X\times Y,G)$; 
		\item  for any $G$-system $(Y,G)$ and any $\calf$-recurrent point $y\in Y$, $(x,y)$ is $\calf$-recurrent in the product system $(X\times Y,G)$.
	\end{enumerate}
\end{thm}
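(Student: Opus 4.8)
The plan is to deduce the theorem from Theorem~\ref{thm:main-result-S-product-recurrent} applied to the closed subsemigroup $S:=h(\calf)$, once a dictionary between $\calf$-recurrence and the fixed points of ultrafilters in $h(\calf)$ is in place. Throughout I extend the action on any $G$-system $(Y,G)$ to the $\beta G$-action $(Y,\beta G)$ in the usual way, setting $py=\lim_{g\to p}gy$ for $p\in\beta G$; recall that for each fixed $y$ the map $p\mapsto py$ is the continuous extension of $g\mapsto gy$, hence continuous, and that $py=y$ holds if and only if $N(y,U)\in p$ for every neighborhood $U$ of $y$.

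First I would verify that $S=h(\calf)$ meets the hypotheses of Theorem~\ref{thm:main-result-S-product-recurrent}. Writing $h(\calf)=\{p:p\subset\calf\}=\bigcap_{A\notin\calf}\cl_{\beta G}(G\setminus A)$ exhibits it as an intersection of clopen sets, so it is closed, and it is a subsemigroup by hypothesis. Since $\calf\subset\finf$, every member of $\calf$ is infinite, whereas a principal ultrafilter contains a singleton; hence no principal ultrafilter lies in $h(\calf)$ and $h(\calf)\subset\beta G\setminus G$. The inclusion $\cl_{\beta G}K(\beta G)\subset h(\calf)$ follows from the monotonicity of $h$, the identity $h(\fps)=\cl_{\beta G}K(\beta G)$ (a point of $\beta G$ lies in the closure of the smallest ideal precisely when each of its members is piecewise syndetic), and the hypothesis $\calf\supset\fps$; in particular $h(\calf)\neq\emptyset$.

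The key step is the correspondence: a point $y$ of a $G$-system is $\calf$-recurrent if and only if there is $p\in h(\calf)$ with $py=y$. The backward direction is immediate, since $py=y$ with $p\subset\calf$ forces $N(y,U)\in p\subset\calf$ for every neighborhood $U$. For the forward direction, the sets $N(y,U)$, as $U$ ranges over the neighborhoods of $y$, form a downward directed filter base lying in $\calf$; by compactness it suffices to show $\cl_{\beta G}N(y,U)\cap h(\calf)\neq\emptyset$ for each $U$ (a finite intersection reduces to a single such set). This is exactly where the Ramsey property enters: partition regularity of $\calf$ yields, for every $A\in\calf$, an ultrafilter $p\in h(\calf)$ with $A\in p$, i.e. $\cl_{\beta G}A\cap h(\calf)\neq\emptyset$. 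Any $p$ in the resulting nonempty intersection satisfies $N(y,U)\in p$ for all $U$, whence $py=y$ with $p\in h(\calf)$.

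With this dictionary I would close the cycle $(1)\Rightarrow(3)\Rightarrow(2)\Rightarrow(1)$. For $(1)\Rightarrow(3)$, let $y$ be $\calf$-recurrent, so $py=y$ for some $p\in S$; then $T=S\cap\{q\in\beta G:qy=y\}$ is a nonempty closed subsemigroup, hence contains an idempotent $u$ by Ellis' theorem, with $u\in S$ and $uy=y$. Because $u^2=u$ gives $ux=u(ux)$, the points $x$ and $ux$ are proximal, so distality of $x$ forces $ux=x$; thus $u(x,y)=(x,y)$ with $u\in S$, and the correspondence applied in $X\times Y$ shows $(x,y)$ is $\calf$-recurrent. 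The implication $(3)\Rightarrow(2)$ is immediate, as every member of $\calf$ is infinite so $\calf$-recurrence implies recurrence. Finally, the correspondence identifies the $\calf$-recurrent points with the $S$-recurrent points, so statement $(2)$ says exactly that $x$ is $S$-product recurrent, and Theorem~\ref{thm:main-result-S-product-recurrent} yields $(2)\Rightarrow(1)$. I expect the main obstacle to be the forward direction of the correspondence—producing an ultrafilter in $h(\calf)$ that fixes $y$—together with the supporting identity $h(\fps)=\cl_{\beta G}K(\beta G)$; both rest on translating the Ramsey property of $\calf$ into the statement $\cl_{\beta G}A\cap h(\calf)\neq\emptyset$ for every $A\in\calf$, and on checking that the extended $\beta G$-action is well enough behaved that the idempotent supplied by Ellis' theorem genuinely lies in $S\subset\beta G\setminus G$.
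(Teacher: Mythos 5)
Your overall strategy is the same as the paper's: take $S=h(\calf)$, verify it is a closed subsemigroup of $\beta G\setminus G$ containing $\cl_{\beta G}K(\beta G)$, translate $\calf$-recurrence into $S$-recurrence for the induced $\beta G$-action (the paper quotes this as Lemma~\ref{lem:f-rec-idempotent}, in the stronger idempotent form; your compactness-plus-partition-regularity proof of the weaker form is fine), and run the Ellis--Namakura idempotent argument for $(1)\Rightarrow(3)$, which is exactly the paper's proof of the corresponding implication in Theorem~\ref{thm:action-pro-distal}. The genuine gap is in your last step, $(2)\Rightarrow(1)$, which is also the only substantive direction. Statement $(2)$ does \emph{not} ``say exactly that $x$ is $S$-product recurrent,'' for two reasons. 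First, $S$-product recurrence requires the pair $(x,y)$ to be $S$-recurrent, while $(2)$ only concludes that $(x,y)$ is recurrent; so as a hypothesis $(2)$ is strictly weaker, and the implication ``$S$-product recurrent $\Rightarrow$ distal'' from Theorem~\ref{thm:main-result-S-product-recurrent} says nothing about a point satisfying only $(2)$. Second, $S$-product recurrence (and its weak variant in Theorem~\ref{thm:action-pro-distal}) quantifies over \emph{all} $\beta G$-actions $(Y,\beta G)$, and these do not in general arise from $G$-systems: the definition of a $\beta G$-action only demands continuity of $p\mapsto py$, not of $y\mapsto gy$, so your dictionary (which covers only induced actions) cannot upgrade $(2)$ to either form of product recurrence over $\beta G$-actions. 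Hence Theorem~\ref{thm:main-result-S-product-recurrent} cannot be invoked as a black box, and your cycle breaks at its final link.

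Closing this gap is where the paper's central/quasi-central machinery is actually needed. The argument (see $(2)\Rightarrow(1)$ in the proof of Theorem~\ref{thm:action-pro-distal}, or the metric version in Theorem~\ref{thm:distal2}) runs: if $x$ is not distal, Theorem~\ref{thm:distal-idem} gives a minimal idempotent $p$ and a neighborhood $U$ of $x$ with $\{g\in G\colon gx\in X\setminus U\}\in p$, so this set is central by Theorem~\ref{thm:central-set-idempotent}; Lemma~\ref{lem:central-lem} together with Proposition~\ref{prop:rec-sigma} then produces an $\fps$-recurrent point $z\in[1]$ of the Bernoulli shift $(\Sigma_2,G)$ --- a genuine $G$-system --- with $N(z,[1])\subset\{g\colon gx\in X\setminus U\}\cup\{e\}$; since $\fps\subset\calf$, $z$ is $\calf$-recurrent, so $(2)$ forces $N((x,z),U\times[1])$ to be infinite, contradicting $N((x,z),U\times[1])\subset\{e\}$. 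It is precisely because the witness lives in a $G$-system that hypothesis $(2)$ suffices. A smaller caveat: your assertion $\calf\subset\finf$ (used both for $h(\calf)\subset\beta G\setminus G$ and for $(3)\Rightarrow(2)$) is not among the stated hypotheses; it holds for $\fps$, $\finf$, $\fpubd$ and is implicitly assumed by the paper as well, but in your write-up it should be flagged as an assumption or justified.
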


The paper is organized as follows. 
To illustrate the core idea, 
in Section 2 we focus on topological dynamical systems and prove Theorem \ref{thm:main1}. 
The proof takes advantage of the order of natural numbers and is thus relatively straightforward. 
In the rest part of this paper, we consider general group actions and Ellis actions. 
In Section 3, we investigate some properties of several collections of subsets in a countably infinite discrete group $G$. 
In Section 4, for compact metric $G$-systems we provide combinatorial characterizations of the return time sets of $\calf$-recurrent points under the conditions (P1) and (P2) introduced in Section 3. 
We also present an application of product recurrence for $G$-systems. 
In Section 5, we recall some results about Stone-\v{C}ech compactification $\beta G$ of $G$ and prove the main result (Theorem~\ref{thm:main3}) of this paper, which can be regarded as a generalization of Theorem~\ref{thm:main1}.
In Section 6, we study $\beta G$-actions on compact Hausdorff spaces and prove Theorems~\ref{thm:main-result-S-product-recurrent} and \ref{thm:main-result-distal-product-rec}.

\section{Proof of Theorem \ref{thm:main1}}\label{section:thm1.2}
In this section, we focus on continuous maps acting on compact metric space and  devote to prove Theorem \ref{thm:main1}. 
It should be noted that the natural order of $\bbn_0$ plays a significant role in the proof of Theorem \ref{thm:main1}, whereas in the general case ($G$-system), the situation becomes more  complicated. 
To illustrate the core idea of the construction,  we decide to prove Theorem \ref{thm:main1} in a separate section, which may be of independent interest. 
We will try our best to make this section self-contained to ensure that readers can understand it independently. 
Readers are referred to Theorems \ref{thm:main2} and \ref{thm:main3} for the general case. 

In Subsection \ref{subsection:defn} we will discuss some equivalent definitions of quasi-central sets. 
For the proof of Theorem \ref{thm:main1}, readers may refer directly to Subsection \ref{subsection:proof}.

\subsection{Some equivalent definitions of quasi-central sets}\label{subsection:defn} 
First we introduce the structure of $\beta\bbn_0$.
Denote by $\mathcal{P}=\mathcal{P}(\bbn_0)$ the collection of all subsets of $\bbn_0$. A subset $\calf$ of $\mathcal{P}$ is called \emph{Furstenberg family 
(or just family)} if it is hereditary upward, i.e., $F_1\subset F_2$ and 
$F_1\in\calf$ imply $F_2\in\calf$. A family $\calf$ is called \emph{proper}
if it is neither empty nor all of $\mathcal{P}$. A family is called \emph{filter} when it is a proper family closed under intersection,i.e.,
if $F_1,F_2\in\calf$ then $F_1\cap F_2\in\calf$. A family is called \emph{ultrafilter} if it is a filter that are maximal with respect to inclusion.

Before going on, let us recall some notions.  
By a \emph{compact right topological semigroup}, we mean a triple $(E,\cdot,\mathcal{T})$, where $(E,\cdot)$ is a semigroup, and $(E,\mathcal{T})$ is a compact Hausdorff space, and for every $p\in E$, the right translation $\rho_p\colon S\to S$, $q\mapsto q\cdot p$ is continuous. 
If there is no ambiguous, we will say that  $E$, instead of the triple $(E,\cdot,\mathcal{T})$, is a compact right topological semigroup.
A nonempty subset $I$ of $E$ is called a \emph{left ideal} of $E$ if  $E \cdot I\subset I$, a \emph{right ideal} of $E$ if  $I \cdot E\subset I$ and an \emph{ideal} of $E$ if it is both a left ideal and a right ideal of $E$.
A \emph{minimal left ideal} is the left ideal that does not contain any proper left ideal. A \emph{minimal right ideal} is the right ideal that does not contain any proper right ideal.
An element $p\in E$ is called \emph{idempotent} if $p\cdot p=p$.
An idempotent $p\in E$ is called a \emph{minimal idempotent} if there exists a minimal left ideal $L$ of $E$ such that $p\in L$.  
Ellis-Namakura theorem reveals every compact right topological semigroup must contains an idempotent, see e.g.~\cite[Theorem 2.5]{HS12}.

Endowing $\bbn_0$ with the discrete topology, we take the points of the Stone-\v{C}ech compactification
$\beta\bbn_0$ of $\bbn_0$ to be the ultrafilter on $\bbn_0$.
For $A\subset\bbn_0$, let $\overline{A}=\{p\in\beta\bbn_0: A\in p\}$.
Then the sets $\{\overline{A}: A\subset\bbn_0 \}$ forms a
basis for the open sets (and a basis for the closed sets) of $\beta\bbn_0$.
Since $(\bbn_0,+)$ is a semigroup, we can extend the operation $+$ to $\beta\bbn_0$ as
\[p+q=\{F\subset \bbn_0: \{n\in\bbn_0: -n+F\in q\}\in p\}.\]
Then $(\beta\bbn_0, +)$ is a compact Hausdorff right topological semigroup
with $\bbn_0$ contained in the topological center of $\beta\bbn_0$.
That is, for each $p\in \beta\bbn_0$ the map $\rho_p: \beta\bbn_0\to\beta\bbn_0$, $q\mapsto q+p$ is continuous,
and for each $n\in\bbn_0$ the map $\lambda_n: \beta\bbn_0\to\beta\bbn_0$, $q\mapsto n+q$ is continuous.
It is well known that $\beta\bbn_0$ has a smallest ideal
$K(\beta\bbn_0)=\bigcup\{L: L $ is a minimal left ideal of $\beta\bbn_0\}
= \bigcup\{R: R $ is a minimal right ideal of $\beta\bbn_0\}$ (\cite[Theorem 2.8]{HS12}).
Let $p\in\beta \bbn_0$, $\{x_n\}_{n\in \bbn_0}$ be an indexed family in a compact Hausdorff space $X$ and $y\in X$. 
If for every neighborhood $U$ of $y$, $\{n\in \bbn_0\colon x_n\in U\}\in p$,
then we say that the \emph{$p$-limit} of $\{x_n\}_{n\in \bbn_0}$ is $y$, denoted by $p\text{-}\lim_{n\in \bbn_0}x_n=y$. 
As $X$ is a compact Hausdorff space, $p\text{-}\lim_{n\in \bbn_0}x_n$ exists and is unique. 

According to \cite[Definition 1.2]{HMS96}, we introduce the following original definition of quasi-central sets.
\begin{defn}
Let $F\subset\bbn_0$. Then $F$ is quasi-central if and only if there exists some idempotent 
$p\in \cl(K(\beta\bbn_0))$ with $F\in p$.
\end{defn}

We recall some classes of subsets of $\bbnz$. 
\begin{defn}
Let $A$ be a subset of $\bbnz$. 
\begin{enumerate}
	\item If for every $L\in \bbn$, 
	there exists $n\in \bbnz$ such that $\{n,n+1,\dotsc,n+L\}\subset A$, then we say that $A$ is \emph{thick}.
	\item If there exists $L\in \bbn$ such that for any $n\in \bbnz$, $\{n,n+1,\dotsc,n+L\}\cap A\neq\emptyset$,
	then we say that $A$ is \emph{syndetic}.
	\item If there exists a thick set $B\subset \bbnz$ and a syndetic $C\subset \bbnz$ such that $A=B\cap C$, then we say that $A$ is \emph{piecewise syndetic}.
\end{enumerate}
\end{defn}

Let $(X,\langle T_s \rangle_{s\in S})$ be a dynamical system defined in \cite{BH07} where $S$ is a semigroup. Note that when $S=\bbn_0$, the action is generated by a continuous evolution map 
$T$ and we simply write the dynamical system as $(X,T)$ in this section (the underlying space $X$ is a compact metric space). 
By the proof of \cite[Theorem 3.4]{BH07},  we have the following theorem, which is a dynamical characterization of quasi-central set.
\begin{thm}\label{thm:quasi=F}
Let $F\subset\bbn_0$. Then $F$ is quasi-central if and only if there exists a dynamical system $(X,T)$, points $x$ and $y$ of $X$, and a neighborhood $U$ of $y$ such that
\begin{enumerate}
\item for any neighborhood $V$ of $y$, 
$N((x,y),V\times V)$ is piecewise syndetic and
\item $N(x,U)=F$.
\end{enumerate}
\end{thm}

We will need the following equivalent characterizations of quasi-central sets.
\begin{prop}\label{prop:eq-quasi-central}
Let $F\subset\bbn_0$. Then the following assertions are equivalent:
\begin{enumerate}
    \item $F$ is quasi-central;
    \item there exists a dynamical system $(X,T)$, points $x$ and $y$ of $X$, and a neighborhood $U$ of $y$ such that
    \begin{enumerate}
	    \item for any neighborhood $V$ of $y$, $N((x,y),V\times V)$ is piecewise syndetic and
	    \item $N(x,U)\subset F\cup\{0\}$.
    \end{enumerate}
\item there exists a dynamical system $(X,T)$, points $x$ and $y$ of $X$, and a neighborhood $U$ of $y$ such that
    \begin{enumerate}
	\item for any neighborhood $V$ of $y$, $N((x,y),V\times V)$ is piecewise syndetic and
	\item $N((x,y),U\times U)\subset F\cup\{0\}$.
    \end{enumerate}
\end{enumerate} 
\end{prop}

\begin{proof}
(1)$\Rightarrow$(2).
It follows from Theorem \ref{thm:quasi=F}.

(2)$\Rightarrow$(3).
It follows from the fact $N((x,y),U\times U)\subset N(x,U)$.

(3)$\Rightarrow$(1).
By  \cite[Lemma 3.3]{BH07} one can pick an idempotent $p\in \cl(K(\beta\bbn_0))$ such that $p\text{-}\lim_{n\in\bbn_0}T^nx=p\text{-}\lim_{n\in\bbn_0}T^ny=y$.
For the neighborhood $U$ of $y$, $N((x,y),U\times U)=N(x,U)\cap
N(y,U)\in p$. 
Then $F\cup\{0\}\in p$. Since $K(\beta\bbn_0)\subset \beta\bbn_0\setminus\bbn_0$, $F\in p$.
By the definition $F$ is quasi-central.
\end{proof}

\subsection{Proof of Theorem \ref{thm:main1}}\label{subsection:proof}
In this subsection we will prove Theorem \ref{thm:main1}.
\begin{defn}
Let $(X,T)$ be a topological  dynamical system and $x\in X$. We say that $x$ is \emph{a piecewise syndetic recurrent point} if for any neighborhood $U$ of $x$, $N(x,U):=\{n\in\bbn_0:
T^nx\in U\}$ is a piecewise syndetic set.
\end{defn}

\begin{lem}\label{lem:a+1}
Let $(X,T)$ be a dynamical system, let $x,y\in X$, and assume that for every neighborhood $V$ of $y$, $N((x,y),V\times V)$ is piecewise syndetic in $\bbn_0$.  
Let $U$ be a neighborhood of $y$ and let $a\in\bbn$. There are a set $H$ which is thick in $\bbn_0$ and a set $S$ which is 
syndetic in $\bbn_0$ such that 
$H\cap S\subset N((x,y),U\times U)$ and $S\subset (a+1)\bbn$.
\end{lem}
\begin{proof}
By Proposition \ref{prop:eq-quasi-central},  $N((x,y), U\times U)$ is a quasi-central set. Then by \cite[Lemma 5.19.2]{HS12} or \cite[Proposition 6.7]{L12}, 
	$\frac{1}{a_1+1}N((x,y), \allowbreak U\times U)\cap \bbn$ is  piecewise syndetic  in $\bbn$, so is in $\bbn_0$.
	There exists a thick set $H'$ of $\bbn$ and a syndetic set 
	$S'$ of $\bbn$ such that \[
	    H'\cap S'=\frac{1}{a_1+1}N((x,y),U\times U)\cap \bbn.
	\]
   Let $H = \bigcup_{j=0}^{a_1} ((a_1+1)H' +j)$ and $S=(a_1+1) S'$. 
   Then $H$ is thick, $S$ is syndetic and
   \[
   H\cap S\subset (a_1+1)(H'\cap S')\subset N((x,y),U\times U).
   \] 
   This ends the proof of the lemma.
\end{proof}

Now we introduce the symbolic dynamical system $(\Sigma_2,\sigma)$.
Let 
$$\Sigma_2=\{0,1\}^{\mathbb{N}_0}=\{x_0x_1x_2\dotsc: x_i\in\{0,1\},i\in\bbn_0\},$$ 
endowed with the product topology on $\{0,1\}^{\mathbb{N}_0}$, while $\{0,1\}$ is endowed with the discrete topology.
A compatible metric $d$ on $\Sigma_2$ is defined by 
\[
d(x,y)= \begin{cases}
0,  &  \ x=y; \\
\frac{1}{2^k},  & k=\min\{i\in \mathbb{N}_0\colon x_i\neq y_i\}, \\
\end{cases}
\]
for any $x,y\in\Sigma_2$.
Then $(\Sigma_2,d)$ is a compact metric space. 
Define the \emph{shift map} as follows
\[\sigma\colon \Sigma_2 \to \Sigma_2,\ x_0x_1x_2\dotsc\mapsto  x_1x_2x_3\dotsc\]  
Then $(\Sigma_2,\sigma)$ is a topological dynamical system. 
Besides infinite symbolic sequences we consider also finite 
symbolic sequences or \emph{word} $u=u_0u_1\dotsc u_{n-1}$ where $u_i\in \{0,1\}$ for $i=0,\dotsc,n-1$.  
If $u=u_0u_1\dotsc u_{n-1}$ is a word of $\{0,1\}$, 
we define the 
\emph{cylinder} of $u$ as 
\[
[u]= \big\{v\in \Sigma_2 \colon v_i=u_i, \text{ for any } 0\leq i\leq n-1\big\}.
\]
Obviously $[u]$ is a clopen subset of $\Sigma_2$. 
Denote $\{0,1\}^n=\{x_0x_1\dotsc x_{n-1}:x_i\in \{0,1\}, 0\leq i\leq n-1\}$ and $\{0,1\}^*=\bigcup_{n=1}^\infty\{0,1\}^n$.
Then the collection of all cylinders
$\{[u]: u\in \{0,1\}^*\}$
forms a topological basis of the topology of $\Sigma_2$. In particular, for any $x=x_0x_1x_2\dotsc\in\Sigma_2$, 
we denote by $x|_{[i,j]}=x_i\dotsc x_j$ the word which occurs in $x$ between coordinates $i$ and $j$. Then we can consider the cylinder $[x|_{[i,j]}]$, i.e., $[x|_{[i,j]}]=\big\{v\in \Sigma_2 \colon v_s=x_s, \text{ for any } i\leq s\leq j\big\}$.
For any $x,y\in \Sigma_2$, 
$x|_{[i,j]}=y|_{[i,j]}$ means that the two words are identical, i.e.,
for any $s\in \{i,\dotsc, j\}$, $x_s=y_s$.

\begin{proof}[Proof of Theorem~\ref{thm:main1}]
(1) Since $x\in X$ is piecewise syndetic recurrent,
for
every neighborhood $V$ of $x$, $N(x,V)$ is a piecewise syndetic set. Then
for the system $(X,T)$,
$x\in X$ and a neighborhood $U$ of $x$, it satisfies that
\begin{enumerate}
	\item[(i)] for every neighborhood $V$ of $x$, $N((x,x),V\times V)= N(x,V)$ is piecewise syndetic;
	\item[(ii)] $N(x,U)= N(x,U)\cup \{0\}$.
\end{enumerate}  
Thus $N(x,U)$ is quasi-central.

(2) 
Let $F$ be a quasi-central subset of $\bbn_0$. 
By Proposition \ref{prop:eq-quasi-central}, there exists a topological dynamical system $(X,T)$, $x,y\in X$ and a neighborhood $U$ of $y$ such that 
	\begin{enumerate}
		\item[(i)] for every neighborhood $V$ of $y$, $N((x,y),V\times V)$ is piecewise syndetic in $\bbn_0$ and
		\item[(ii)] $N(x,U)\subset F\cup\{0\}$.
	\end{enumerate} 

    We shall show that for the symbolic dynamical system $(\Sigma_2,\sigma)$,
    there exists a point $z\in \Sigma_2$ which is a piecewise syndetic recurrent point such that $[1]$
    is a neighborhood of $z$ and $N(z,[1])\subset F\cup\{0\}$.
 
	Let $U_1=U$.  
	Since $N((x,y),U_1\times U_1)$ is piecewise syndetic in $\bbn_0$,
	pick a set $H_1$ which is thick in $\bbn_0$ and a set $S_1$ which is syndetic in $\bbn_0$
	such that $H_1\cap S_1= N((x,y),U_1\times U_1)$. 
	Pick a finite integer interval $I_1^{(1)}\subset H_1$ such that $I_1^{(1)}\cap S_1\neq\emptyset$, $\min I_1^{(1)}>1$
	and $|I_1^{(1)}|>1$, where $|\cdot|$ denote the cardinality of the set. Define $z^{(1)}\in \Sigma_2$ as follows:
	\begin{equation*}
	z^{(1)}(n)=\begin{cases}
	1, & n=0;\\
	1, & n\in I^{(1)}_1\cap S_1;\\
	0, & n\in \bbn_0\setminus \{\{0\}\cup (I^{(1)}_1\cap S_1)\}. 
	\end{cases}
	\end{equation*}
	Then $z^{(1)}(0)=1$, $z^{(1)}(1)=0$
	and $N(z^{(1)},[1])=\{0\}\cup  (I^{(1)}_1\cap S_1)$.
	Let $A_1= N(z^{(1)},[1])$ and let $a_1=\max A_1$. Then $A_1$ is a finite subset of $\bbn_0$ and $A_1\subset N((x,y),U_1\times U_1)\cup \{0\}$. 

Let $k\in\bbn$ and assume that we have chosen $\big\langle z^{(i)}\big\rangle_{i=1}^k$ in $\Sigma_2$, $\big\langle U_{i}\big\rangle_{i=1}^k$ neighborhood of 
$y$ in $X$,  $\big\langle A_{i}\big\rangle_{i=1}^k$, $\big\langle a_{i}\big\rangle_{i=1}^k$, $\big\langle H_{i}\big\rangle_{i=1}^k$, $\big\langle S_{i}\big\rangle_{i=1}^k$ and $\big\langle\big\langle I_{i}^{(j)}\big\rangle_{j=1}^i\big\rangle_{i=1}^k$ satisfying the following hypotheses for $i\in\{1,2,\dotsc,k\}$.

\begin{enumerate}
\item $A_i=N(z^{(i)},[1])\subset N((x,y),U_1\times U_1)\cup \{0\}$ and $a_i=\max A_i$;
\item if $i>1$, then $A_{i-1}\subset A_i$ and $a_{i-1}<a_i$;
\item if $i>1$, then $U_i=\bigcap_{j\in A_{i-1}} T^{-j} U_{1}$;
\item  $H_i$ is thick in $\bbn_0$, $S_i$ is syndetic in $\bbn_0$, and $H_i\cap S_i \subset N((x,y), U_i\times U_i)$; 
\item if $i>1$, then $S_i\subset(a_{i-1}+1)\bbn$;
\item if $1\leq j\leq i$, then $I_{i}^{(j)}$ is a finite interval,  $|I^{(j)}_i|>i$, $I^{(j)}_i\subset H_j$ and $ I_i^{(1)} \cap S_1\neq\emptyset$; 
\item if $i>1$, then $\min I_i^{(1)}>a_{i-1}$, 
and $\min I_i^{(2)}>\max  I_i^{(1)}$; 
\item  if $i>2$, then  $\min I_i^{(1)}>\max  I_{i-1}^{(i-1)}+a_{i-2}$,  $\min I_i^{(2)}>\max I_i^{(1)}$ and if $3\leq j\leq i$, then 
$\min I_i^{(j)}>\max  I_{i-1}^{(j-1)}+a_{j-2}$;
\item if $i>1$, then $z^{(i)}|_{[0,a_{i-1}]}=z^{(i-1)}|_{[0,a_{i-1}]}$; 
\item if $n\in I_i^{(1)}\cap S_1$, then $z^{(i)}(n)=1$;
\item if $2\leq j\leq i$ and $n\in I_i^{(j)}\cap S_{j}$, $z^{(i)}|_{[n,n+a_{j-1}]}=z^{(j-1)}|_{[0,a_{j-1}]}$; 		
\item if $i>1$ and $t\in \bbn_{0}\setminus ([0,a_{i-1}]\cup (I_i^{(1)}\cap S_1)\cup \bigcup_{j=2}^i \bigcup_{n\in I_i^{(j)}\cap S_{j}} [n,n+a_{j-1}]   )$, then $z^{(i)}(t)=0$.
\end{enumerate}
All hypotheses satisfied for $i=1$, all but $(1)$, $(4)$, $(6)$ and $(10)$
vacuously.

We now show that all hypotheses satisfied for $i=k+1$. 
Let $U_{k+1}=\bigcap_{j\in A_k}T^{-j}U_1$.
By hypothesis $(1)$, if $j\in A_k$, then 
$j\in  N((x,y),U_1\times U_1)\cup \{0\}$ so $T^jy\in U_1$. Therefore 
$U_{k+1}$ is an open neighborhood of $y$. 
By Lemma \ref{lem:a+1}, pick a thick subset $H_{k+1}$ of $\bbn_0$ and a syndetic subset $S_{k+1}$ of $\bbn_0$
such that $S_{k+1}\subset (a_k+1)\bbn$ and 
$H_{k+1}\cap S_{k+1}\subset N((x,y),U_{k+1}\times U_{k+1})$.

Take a finite interval $I_{k+1}^{(1)}$ in $H_{1}$ with $\min I_{k+1}^{(1)}>a_k$ such that $I_{k+1}^{(1)}\cap S_1\neq\emptyset$ and 
$\min I_{k+1}^{(1)}>\max I_{k}^{(k)}+u$ where 
\begin{equation}
\nonumber
u=
\begin{cases}
0,\ \ \ \text{if}\  k=1;\\
a_{k-1},\ \ \ \text{if}\ k>1.
\end{cases}   
\end{equation}
For $j\in\{2,3,\dotsc, k+1\}$ pick a finite interval $I_{k+1}^{(j)}$ in $H_j$ such that 
$|I_{k+1}^{(j)}|>k+1$, $\min I_{k+1}^{(j)}>\max I_{k+1}^{(j-1)}$ and if $j\geq 3$, then 
$\min I_{k+1}^{(j)}>\max I_{k+1}^{(j-1)}+a_{j-2}$. 

We claim that we can define $z^{(k+1)}\in\Sigma_2$ as required by 
hypotheses $(9)-(12)$ for $i=k+1$. That is,
\begin{enumerate}
\item [(9)] $z^{(k+1)}|_{[0,a_{k}]}=z^{(k)}|_{[0,a_{k}]}$; 
\item [(10)] if $n\in I_{k+1}^{(1)}\cap S_1$, then $z^{(k+1)}(n)=1$;
\item [(11)] if $2\leq j\leq k+1$ and $n\in I_{k+1}^{(j)}\cap S_{j}$, $z^{(k+1)}|_{[n,n+a_{j-1}]}=z^{(j-1)}|_{[0,a_{j-1}]}$; 	
\item [(12)] if $t\in \bbn_{0}\setminus ([0,a_{k}]\cup (I_{k+1}^{(1)}\cap S_1)\cup \bigcup_{j=2}^{k+1} \bigcup_{n\in I_{k+1}^{(j)}\cap S_{j}} [n,n+a_{j-1}]   )$, then $z^{(k+1)}(t)=0$.
\end{enumerate}

By the construction of $I_{k+1}^{(j)}$, $j=1,\dots,k+1$, 
we have 
$\min I_{k+1}^{(j)}>\min I_{k+1}^{(1)}>a_{k}$ for $j\in\{1,2,\dotsc,k+1\}$, which implies that 
$(9)$ cannot conflict with $(10)$ or $(11)$. 

To see that $(10)$ cannot conflict with any part of $(11)$, let $j\in \{1,2,\dotsc,k+1\}$, let $m\in I_{k+1}^{(j)}\cap S_j$ and let $t\in [0,a_{j-1}]$. Then 
$m+t\geq \min I_{k+1}^{(j)}>\max I_{k+1}^{(1)}\geq n$.

Finally, we show that any part of $(11)$ cannot conflict with each other.
Suppose we have 
$2\leq j\leq l\leq k+1$, 
$n\in I_{k+1}^{(j)}\cap S_j$, $m\in I_{k+1}^{(l)}\cap S_l$, $t\in [0,a_{j-1}]$ and $s\in [0,a_{l-1}]$
such that $n+t=m+s$. Assume first that $j=l$. If $n=m$,
then $t=s$ and there is no conflict. So suppose without loss of generality that $n<m$. Then 
$n,m\in (a_{j-1}+1)\bbn$ so pick $b<c$ in $\bbn$ such that $n=(a_{j-1}+1)b$ and $m=(a_{j-1}+1)c$.
Then $n+t=(a_{j-1}+1)b+t<(a_{j-1}+1)c\leq m+s$, a contradiction. Thus we must have $j<l$ so $l\geq 3$.
Then $m+s\geq \min I_{k+1}^{(l)}>\max I_{k+1}^{(l-1)}+a_{l-2}\geq \max I_{k+1}^{(j)}+a_{j-1}\geq n+t$, a contradiction.

Let $A_{k+1}=N(z^{k+1},[1])$ and let $a_{k+1}=\max A_{k+1}$.
All hypotheses are satisfied directly except $(1)$ and $(2)$.
To see that $A_k\subset A_{k+1}$, let $n\in A_k$.
Then $n\leq a_k$ so by hypothesis $(9)$, $z^{(k+1)}(n)=z^{(k)}(n)=1$. Also by $(6)$, $I_{k+1}^{(1)}\cap S_1\neq\emptyset$ so by
$(10)$ $a_{k+1}\geq \min (I_{k+1}^{(1)}\cap S_1)$ and 
$\min (I_{k+1}^{(1)}\cap S_1)\geq \min I_{k+1}^{(1)}>a_k$ by $(7)$. Thus hypothesis $(2)$ holds.

To verify hypothesis $(1)$ we need to show that 
$N(z^{(k+1)},[1])\subset N((x,y),U_1\times U_1)\cup \{0\}$. 
So let $m\in N(z^{(k+1)},[1])$. If $m\in [0,a_k]$, then 
$m\in A_k\subset N((x,y),U_1\times U_1)\cup \{0\}$.
If $m\in I_{k+1}^{(1)}\cap S_1$, then $m\in H_1\cap S_1\subset N((x,y),U_1\times U_1)$. So assume that we have $2\leq j\leq k+1$, $n\in I_{k+1}^{(j)}\cap S_j$, and $t\in A_{j-1}$ such that $m=n+t$.
By hypothesis $(6)$ and $(4)$, $n\in N((x,y),U_j\times U_j)$ so $T^n x\in U_j$ and $T^n y\in U_j$. By hypothesis $(3)$, 
$T^t(T^n x)\in U_1$ and $T^t(T^n y)\in U_1$
so $m=n+t\in N((x,y),U_1\times U_1)$. The inductive
construction is complete.

We now establish some facts.
\begin{itemize}
\item[(a)] if $1\leq r<j\leq i$, then for each $n\in I_{j}^{(r+1)}\cap S_{r+1}$, \[z^{(i)}|_{[n,n+a_r]}=z^{(j)}|_{[n,n+a_r]}=z^{(r)}|_{[0,a_r]}.\]
\end{itemize}
 
To establish $(a)$, let $1\leq r<j\leq i$, let $n\in I_{j}^{(r+1)}\cap S_{r+1}$ and let $t\in [0,a_r]$. By hypothesis $(11)$, $z^{(j)}(n+t)=z^{(r)}(t)$. Now 
$z^{(j)}(n+a_r)=z^{(r)}(a_r)=1$ so $n+a_r\in A_j$ and thus
$n+a_r\leq a_j$. Then by hypotheses $(2)$ and $(9)$, 
$z^{(i)}(n+t)=z^{(j)}(n+t)=z^{(r)}(t)$.

\begin{itemize}
\item [(b)] if $1\leq r<j\leq i$, then $I_j^{(r+1)}\cap S_{r+1}\subset N(z^{(i)}, [z^{(r)}|_{[0,a_r]}])$.
\end{itemize}

To establish $(b)$, let $1\leq r<j\leq i$ and let $n\in I_{j}^{(r+1)}\cap S_{r+1}$. Then by $(a)$, for each $t\in [0,a_r]$, $\sigma^n(z^{(i)})(t)=z^{(i)}(n+t)=z^{(r)}(t)$
so $n\in N(z^{(i)}, [z^{(r)}|_{[0,a_r]}])$ as required.
\smallskip

Since $\left\langle z^{(i)}\right\rangle_{i=1}^\infty$ is a sequence in compact space $\Sigma_2$, we may pick a cluster point 
$z\in \Sigma_2$ of the sequence $\left\langle z^{(i)}\right\rangle_{i=1}^\infty$.

\begin{itemize}
\item [(c)] For each $j\in\bbn$, $z|_{[0,a_j]}=z^{(j)}|_{[0,a_j]}$.
\end{itemize}

To establish $(c)$, let $j\in\bbn$ and let $t\in [0,a_j]$. Since $z$ is a cluster point of the  sequence $\left\langle z^{(i)}\right\rangle_{i=1}^\infty$ and $[z|_{[0,a_j]}]$ is a neighborhood of $z$, we can pick 
$i>j$ such that $z^{(i)}\in [z|_{[0,a_j]}]$. Then 
$z^{(i)}|_{[0,a_j]}=z|_{[0,a_j]}$ and by hypotheses $(2)$ and $(9)$, $z^{(j)}|_{[0,a_j]}=z^{(i)}|_{[0,a_j]}=z|_{[0,a_j]}$.
\smallskip

As a consequence of $(c)$, for each $r\in\bbn$, $[z^{(r)}|_{[0,a_r]}]$ is a neighborhood of $z$. So $\{[z^{(r)}|_{[0,a_r]}]:r\in\bbn\}$ is a neighborhood basis for $z$.
\begin{enumerate}
\item [(d)] If $1\leq r< i$, then $N(z^{(i)}, [z^{(r)}|_{[0,a_r]}])\subset N(z, [z^{(r)}|_{[0,a_r]}])$.
\end{enumerate}

To establish $(d)$, let $1\leq r< i$ and $n\in N(z^{(i)}, [z^{(r)}|_{[0,a_r]}])$, then for any $t\in [0,a_r]$,
\[\sigma^n(z^{(i)})(t)=z^{(i)}(n+t)=z^{(r)}(t).\]
In particular $z^{(i)}(n+a_r)=z^{(r)}(a_r)=1$ so 
$n+a_r\in A_i$ and thus $n+a_r\leq a_i$. By $(c)$, 
$z|_{[0,a_i]}=z^{(i)}|_{[0,a_i]}$ so $\sigma^n(z)(t)=z(n+t)=
z^{(i)}(n+t)=z^{(r)}(t)$. Thus $n\in N(z, [z^{(r)}|_{[0,a_r]}])$ as claimed.
\smallskip

Now we claim that $z$ is a piecewise syndetic recurrent point of $\Sigma_2$. To see this, let $R$
be a neighborhood of $z$ and pick $r\in\bbn$ such that $[z^{(r)}|_{[0,a_r]}]\subset R$. As $S_{r+1}$ is syndetic and $\bigcup_{i=r+1}^\infty I_i^{(r+1)}$ is thick, $S_{r+1}\cap (\bigcup_{i=r+1}^\infty I_i^{(r+1)})$ is piecewise syndetic and 
\begin{align*}
S_{r+1}\cap \biggl(\bigcup_{i=r+1}^\infty I_i^{(r+1)}\biggr)&= \bigcup_{i=r+1}^\infty (S_{r+1}\cap I_i^{(r+1)}) \\
& \subset \bigcup_{i=r+1}^\infty N(z^{(i)},[z^{(r)}|_{[0,a_r]}])\subset N(z, [z^{(r)}|_{[0,a_r]}]),
\end{align*} 
where the first inclusion holds by $(b)$ and the second inclusion holds by $(d)$. So $z$ is a piecewise syndetic recurrent point of $\Sigma_2$.

By $(c)$ $[1]$ is a neighborhood of $z$. We conclude the proof by showing that $N(z,[1])\subset F\cup\{0\}$.
If $n\in N(z,[1])$ and $a_i>n$ then by $(c)$ $z(n)=z^{(i)}(n)$ so $N(z,[1])\subset \bigcup_{i=1}^\infty N(z^{(i)},[1])$. By hypothesis $(1)$, for each $i\in\bbn$, $N(z^{(i)},[1])\subset N((x,y),U_1\times U_1)\cup \{0\}\subset N(x,U)\cup\{0\}$ so $N(z,[1])\subset F\cup\{0\}$.
\end{proof}

\section{Subsets in a countable infinite group} 

In this section we investigate some classes of subsets in a countable infinite discrete group. 
We propose two abstract properties (P1) and (P2) for a Furstenberg family which we will use in Section 3 to characterize recurrent time sets. 
We will verify that the collection of all piecewise syndetic sets and the collection of all infinite sets satisfy the two abstract properties. 
If the group is amenable, the collection of all sets with positive upper density (with positive upper Banach density, respectively) also satisfies the two abstract properties.

Let $G$ be a countable infinite discrete group with identity $e$.
Denote by $\calpg$ and $\calpfg$ the collections of all subsets of $G$ and all nonempty finite subsets of $G$ respectively.
Let $\calf\subset \calpg\setminus\{\emptyset\}$.
If for any $F\in \calf$, $F\subset H\subset G$ implies $H\in\calf$,
then we say that $\calf$ is a \emph{Furstenberg family (or just family)}. 
A Furstenberg family $\calf$ is said to be \emph{proper} if 
it is a proper subset of $\calpg\setminus\{\emptyset\}$.
For a Furstenberg family $\calf$, 
the \emph{dual family} of $\calf$, denote by $\calf^*$, 
is
\[\{F\in\mathcal{P}(G):F\cap F'\not=\emptyset,\ \text{for any} \ F'\in\calf\}.\]
Note that $\calf^*=\{F\in\mathcal{P}(G):
G\setminus F\not\in \calf\}$.
A Furstenberg  family $\calf$ is called a \emph{filter} if  $A,B\in \calf$ imply $A\cap B\in\calf$.
A \emph{ultrafilter} is a filter which is not properly contained in any other filter.
A Furstenberg family $\calf$ has \emph{Ramsey property} if whenever $A\in\calf$ and  $A=A_1\cup A_2$ there exists some  $i\in\{1,2\}$ such that $A_i\in\calf$. 
It is easy to see that a Furstenberg family $\calf$ has the Ramsey property if and only if the dual family $\calf^*$ is a filter.

Let $A$ be a subset of $G$. 
\begin{enumerate}
\item If for every $K\in \calpfg$, 
there exists $g\in G$ such that $Kg\subset A$, then we say that $A$ is \emph{thick}.
\item If there exists $K\in \calpfg$ such that for any $g\in G$, $Kg\cap A\neq\emptyset$ (i.e. $G=K^{-1} A$),
then we say that $A$ is \emph{syndetic}.
\item If there exists a thick set $B\subset G$ and a syndetic $C\subset G$ such that $A=B\cap C$, then we say that $A$ is \emph{piecewise syndetic}.
\end{enumerate}
Denote by $\ft$, $\fs$, $\fps$, $\finf$ the collection of all thick, syndetic, piecewise syndetic and infinite subsets of $G$. 

We say that a Furstenberg family $\calf$ satisfies (P1) if 
for any $A\in\calf$ there exists a sequence 
$\{A_n\}_{n=1}^\infty$ in $\calpfg$ such that 
 \begin{enumerate}
     \item for every $n\in\bbn$, $A_n\subset A$;
     \item for every $n,m\in\bbn$ with $n\neq m$, 
     $A_n\cap A_m=\emptyset$;
     \item for every strictly increasing sequence $\{n_k\}_{k=1}^\infty$
     in $\bbn$, $\bigcup_{k=1}^\infty A_{n_k}\in\calf$,
 \end{enumerate}
and (P2) if 
for any $F\in\calf$ and any $K\in\calpfg$,
there exists a subset $F'$ of $F$ such that $F'\in\calf$ and for any 
distinct $f_1,f_2\in F'\cup\{e\}$, $Kf_1\cap Kf_2=\emptyset$.

First we need the following lemma.
\begin{lem}\label{lem:property}
For every $F,H\in\calpfg$ if $|H|>|F|^2$
there exists $h\in H$ such that $F\cap Fh=\emptyset$.
\end{lem}
\begin{proof}
For any $f_1,f_2\in F$, let
$B(f_1,f_2)=\{h\in H\colon f_1=f_2 h\}$.
As $G$ is a group, each $B(f_1,f_2)$ is the empty set or a singleton.
If for every $h\in H$, $F\cap Fh\neq\emptyset$,
then $\bigcup_{f_1,f_2\in F}B(f_1,f_2)=H$.
As $|H|>|F|^2$, there exist $f_1,f_2\in F$,
such that $B(f_1,f_2)$ contains at least two points.
This is a contraction.
\end{proof}

The following result must be folklore. We provide a proof for the sake of completeness.
\begin{lem}\label{lem:t-P1-s-P2}
$\ft$ satisfies (P1).
\end{lem}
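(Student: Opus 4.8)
The plan is to exploit the countability of $G$ to exhaust it by an increasing sequence of finite sets, and then to build the required sequence $\{A_n\}$ by placing pairwise disjoint right translates of these exhausting sets inside $A$. Fix an enumeration $G=\{g_1,g_2,\dots\}$ and set $K_n=\{g_1,\dots,g_n\}$, so that $K_1\subset K_2\subset\cdots$ and $\bigcup_{n}K_n=G$. The guiding idea is that if each $A_n$ contains a translate $K_n h_n$ of $K_n$, then any union over a strictly increasing index sequence $\{n_k\}$ will contain a translate of $K_m$ for every $m$ (choose $k$ with $n_k\ge m$), and hence will itself be thick; this is exactly what condition~(3) of (P1) demands.

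The one auxiliary fact I would isolate first is that, for a thick set $A$ and a finite $K\in\calpfg$, the set of ``landing positions''
\[
T=\bigcap_{k\in K}k^{-1}A=\{g\in G:Kg\subset A\}
\]
is not merely nonempty (which is the definition of thickness) but is in fact infinite. To see this, given any $m\in\bbn$ I would choose $m$ distinct elements $\gamma_1,\dots,\gamma_m\in G$, which is possible since $G$ is infinite, apply thickness to the finite set $\bigcup_{i=1}^m K\gamma_i$ to obtain some $h$ with $K\gamma_i h\subset A$ for all $i$, and then observe that $\gamma_1 h,\dots,\gamma_m h$ are $m$ distinct elements of $T$.

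With this in hand I would construct $\{A_n\}$ by induction, maintaining the finite ``used'' set $F_{n-1}=A_1\cup\cdots\cup A_{n-1}$ (with $F_0=\emptyset$). At stage $n$ the admissible positions form the infinite set $T_n=\{h:K_n h\subset A\}$, while the positions that would create an overlap with $F_{n-1}$ form the finite set $K_n^{-1}F_{n-1}=\{k^{-1}f:k\in K_n,\ f\in F_{n-1}\}$. Since $T_n$ is infinite and the forbidden set is finite, I can pick $h_n\in T_n\setminus K_n^{-1}F_{n-1}$ and set $A_n=K_n h_n$. By construction $A_n\subset A$ is a nonempty finite set, the relation $A_n\cap F_{n-1}=\emptyset$ delivers pairwise disjointness, and for any strictly increasing $\{n_k\}$ we have $n_k\ge m$ for all large $k$, whence $K_m\subset K_{n_k}$ and $K_m h_{n_k}\subset A_{n_k}\subset\bigcup_k A_{n_k}$; thus every such union contains a translate of every finite set and is therefore thick.

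The main obstacle, and the only place where some care is needed, is reconciling the disjointness requirement (2) with the union requirement (3): each $A_n$ must still contain a full translate of the large set $K_n$ even after the finitely many positions blocked by $A_1,\dots,A_{n-1}$ have been removed. The infinitude of $T_n$ established above is precisely what guarantees that these finitely many forbidden positions can always be avoided, so the inductive construction never gets stuck.
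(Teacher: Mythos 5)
Your proof is correct, and it shares the paper's overall skeleton: both arguments realize each $A_n$ as a right translate of the $n$-th member of an increasing exhausting sequence of finite sets, placed inside $A$ and disjoint from $A_1,\dots,A_{n-1}$, so that condition (3) holds because any tail union contains a translate of every finite subset of $G$. The difference lies in how the inductive step is carried out. The paper first proves that the landing set $\{g\in G\colon Kg\subset F\}$ of a thick set $F$ is itself thick (Claim 2 of its proof), and then applies a counting lemma (Claim 1: if $|H|>|F|^2$, then some $h\in H$ satisfies $F\cap Fh=\emptyset$) to find a translate of $B_n=\bigcup_{i<n}A_i\cup G_n$ that lies inside $A$ and is disjoint from $B_n$. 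You instead show only that the landing set is infinite --- a weaker fact with an easier proof --- and obtain disjointness from the elementary observation that the positions creating an overlap with the finite set $F_{n-1}=A_1\cup\dots\cup A_{n-1}$ form the finite set $K_n^{-1}F_{n-1}$, which an infinite set can avoid. Your route is leaner: it dispenses with both auxiliary claims and does not require translating the previously built sets into $A$ (the paper's condition $B_ng_n\subset A$ is stronger than what is actually used, since only $A_n=G_ng_n\subset A$ matters). What the paper's formulation buys is reusability: its counting Claim 1 is invoked again later, in the proof of Proposition~\ref{prop:P2}, whereas your avoidance argument is tailored to this one lemma.
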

\begin{proof}
We need the following claim.

\medskip 

\noindent\textbf{Claim}
Let $F$ be a thick set.
Fix $K\in\calpfg$, then $\{g\in G\colon Kg\subset F\}$ is a thick set.
\begin{proof}
For any $H\in\calpfg$, $KH\in \calpfg$.
As $F$ is thick, there exists $h\in G$ such that $KHh\subset F$.
Then $Hh\subset \{g\in G\colon Kg\subset F\}$. So $\{g\in G\colon Kg\subset F\}$ is a thick set. 
\end{proof}

Now fix a thick set $A$.
As $G$ is countable, there exists a sequence $\{G_n\}_{n=1}^\infty$ in $\calpfg$ such that $G_n\subset G_{n+1}$ and $\bigcup_{n=1}^\infty G_n =G$.
As $A$ is thick, there exists $g_1\in G$ such that $G_1 g_1\subset A$.
Let $A_1= G_1g_1$.
Let $B_2=A_1\cup G_2$.
By the claim, $\{g\in G\colon B_2g\subset A\}$ is thick.
By Lemma \ref{lem:property}, 
there exists $g_2\in \{g\in G\colon B_2g\subset A\}$ such that $B_2\cap B_2g_2=\emptyset$.
Let $A_2=G_2g_2$. 

By induction, we construct two sequences $\{A_n\}$, $\{B_n\}$ in $\calpfg$ and a sequence $\{g_n\}$ in $G$ such that for any $n\geq 2$,
\begin{enumerate}
    \item $B_n=\bigcup_{i=1}^{n-1}A_i \cup G_n$;
    \item $B_n g_n\subset A$;
    \item $B_n \cap B_ng_n=\emptyset$;
    \item $A_n=G_n g_n$.
\end{enumerate}
    Then for any $n\in \bbn$, $A_n\subset B_n g_n\subset A$; for any $n,m\in \bbn$ with $n\neq m$, without loss of generality assume that $n>m$, $A_n\cap A_m\subset A_n\cap B_n\subset B_n g_n\cap B_n=\emptyset$; Since $G_n\subset G_{n+1}$ and $\bigcup_{n=1}^\infty G_n =G$, for every strictly increasing sequence $\{n_k\}_{k=1}^\infty$
     in $\bbn$, $\bigcup_{k=1}^\infty A_{n_k}\in\calf_t$. Thus $\ft$ satisfies (P1). 
\end{proof}

In \cite{XY22} Xu and Ye showed that $\fs$ satisfies (P2).
Here we have the following sufficient condition for a Furstenberg family to satisfy (P2).

\begin{prop}\label{prop:P2}
Let $\calf$ be a proper Furstenberg family in $\mathcal{P}(G)\setminus\{\emptyset\}$. 
If $\calf$ has the Ramsey property and for every $A\in\calf$ and $g\in G$, $gA\in \calf$, then $\calf$ satisfies (P2).
\end{prop}
\begin{proof}
We first show the following Claim.

\medskip 
\noindent\textbf{Claim:} For every $A\in\calf$ and $K\in\calpfg$, $A\setminus K\in \calf$.
\begin{proof}[Proof of the Claim]
Let $A\in\calf$ and $K\in\calpfg$.
As $\calf$ has the Ramsey property and $A=(A\cap K)\cup (A\setminus K)$, either $A\cap K\in\calf$ or $A\setminus K\in\calf$. Now we assume that $A\cap K\in\calf$ and
write the finite $A\cap K$ as $\{k_1,k_2,\dotsc,k_n\}$.
By the Ramsey property of $\calf$ again, there exists some $1\leq i\leq n$ such that $\{k_i\}\in\calf$.
For every $g\in G$, $g\{k_i\}=\{gk_i\}\in\calf$. 
As $\calf$ is a Furstenberg family, $\calf=\calpg\setminus\{\emptyset\}$, which contradicts that $\calf$ is proper.
Therefore, $A\setminus K\in\calf$.
\end{proof}

Now Fix $A\in\calf$ and $K\in\calpfg$.
Let 
	\[
	\calb=\{B\subset A\colon \text{for any 
		distinct $b_1,b_2\in B\cup\{e\}$, $Kb_1\cap Kb_2=\emptyset$}
	\}.
	\]
	By the Claim, $A$ is infinite.
	By Lemma \ref{lem:property}, there exists
	$h\in A\setminus \{e\}$ such that $K\cap Kh=\emptyset$, then $\{h\}\in\calb$, which implies that 
	$\calb$ is not empty.
	By the Zorn's Lemma, pick $B\in\calb$ which is maximal with respect to the inclusion relation. If $D\in \calb$ then also $D\cup \{e\}\in\calb$ and since $B\in\calb$ is maximal with respect to the inclusion relation, $e\in B$.

Now we will show that $B\in\calf$. 
	For any $a\in A$, there exists $b\in B$ such that 
	$Ka\cap Kb\neq\emptyset$. (For otherwise there is $a\in A$ such that for any $b\in B$, we have $Ka\cap Kb=\emptyset$. so $a\notin B$, $B\subsetneq B\cup \{a\}\in\mathcal{B}$, contradicting the maximality of set $B$). 
	Then $a\in K^{-1}Kb$. 
	This shows that $A\subset K^{-1}K B$.
Then $K^{-1}K B\in\calf$ as $A\in\calf$.
As $\calf$ has the Ramsey property and $K^{-1}K$ is finite, there exists some $g\in K^{-1}K$ such that 
$gB\in\calf$. Then $B=g^{-1}(gB)\in\calf$. 
\end{proof}

It is easy to see that $\finf$ satisfies the properties (P1) and (P2).
Now we show that $\fps$ also satisfies the properties (P1) and (P2).
\begin{lem} \label{lem:fps-P1-P2}
$\fps$ satisfies  (P1) and (P2).
\end{lem}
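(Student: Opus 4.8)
The plan is to handle (P2) and (P1) separately, obtaining (P2) quickly from Proposition~\ref{prop:P2} and reserving the main effort for (P1). For (P2) I would verify the three hypotheses of Proposition~\ref{prop:P2} for $\fps$. First, $\fps$ is a proper Furstenberg family: it is upward closed, contains $G$, and excludes $\emptyset$ because a thick set and a syndetic set always meet (if $G=K^{-1}C$ and $Kg\subset B$, then some point of $Kg$ lies in $B\cap C$). Closure under left translation is routine: if $A=B\cap C$ with $B$ thick and $C$ syndetic, then $gA=gB\cap gC$ with $gB$ thick and $gC$ syndetic. For condition $(1)$, writing $A=B\cap C$ gives $A\setminus K=(B\setminus K)\cap C$, and $B\setminus K$ is still thick since for any finite $H$ the set $\{g: Hg\subset B\}$ is thick by Claim~2 of Lemma~\ref{lem:t-P1-s-P2}, while only finitely many $g$ satisfy $Hg\cap K\neq\emptyset$. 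Finally, condition $(3)$ is the classical partition regularity (Ramsey property) of piecewise syndetic sets. Proposition~\ref{prop:P2} then yields (P2).

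For (P1) the key tool is the standard characterization that $A\in\fps$ if and only if there is a finite set $K$ with $K^{-1}A:=\bigcup_{k\in K}k^{-1}A$ thick. The direction I need most is easy to produce: given $A=B\cap C$ with $G=K^{-1}C$, for each finite $H$ choose $g$ with $KHg\subset B$ (possible as $B$ is thick); then for every $h\in H$ syndeticity gives $l\in K$ with $lhg\in C$, so $lhg\in B\cap C=A$ and hence $hg\in K^{-1}A$, showing $K^{-1}A$ is thick. Fixing such a $K$, I set $T=K^{-1}A$, which is thick. Since $\ft$ satisfies (P1) by Lemma~\ref{lem:t-P1-s-P2}, there is a sequence $\{T_n\}$ in $\calpfg$ of pairwise disjoint subsets of $T$ every infinite subunion of which is thick.

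Next I would pull these pieces back into $A$ after a separation step. Because the $T_n$ are pairwise disjoint, every finite subset of $G$ meets only finitely many of them, so a greedy selection produces indices $n_1<n_2<\cdots$ with $KT_{n_i}\cap KT_{n_j}=\emptyset$ for $i\neq j$; relabelling this subsequence as $\{T_n\}$ preserves thickness of infinite subunions. Now set $A_n=KT_n\cap A$. Each $A_n$ is a finite nonempty subset of $A$ (nonempty since every $t\in T=K^{-1}A$ satisfies $Kt\cap A\neq\emptyset$), and the separation forces the $A_n$ to be pairwise disjoint. Moreover, for $t\in T_n$ any $a\in Kt\cap A\subset A_n$ gives $t=k^{-1}a\in K^{-1}A_n$, so $T_n\subset K^{-1}A_n$. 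Hence for any strictly increasing $\{n_k\}$ we get $\bigcup_k T_{n_k}\subset K^{-1}\bigl(\bigcup_k A_{n_k}\bigr)$; the left-hand side is thick, so $K^{-1}\bigl(\bigcup_k A_{n_k}\bigr)$ is thick, and the characterization gives $\bigcup_k A_{n_k}\in\fps$. This verifies (P1).

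The main obstacle is arranging the pull-back so that the finite pieces are genuinely pairwise disjoint: the assignment $t\mapsto a_t$ need not be injective and distinct blocks $KT_n$ can overlap. Passing to a $K^{-1}K$-separated subsequence before intersecting with $A$ is exactly what removes this difficulty, and it is available precisely because disjoint finite sets are spread out enough that any finite set meets only finitely many of them. A secondary point needing care is the converse direction of the piecewise syndetic characterization, which I would either cite from the literature or recover by exhibiting the thick and syndetic factors of $\bigcup_k A_{n_k}$ explicitly.
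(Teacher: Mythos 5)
Your proof is correct, but it follows a genuinely different route from the paper's on both halves. For (P2), the paper never invokes Proposition~\ref{prop:P2} for $\fps$: it writes $F=A\cap B$ with $A$ thick and $B$ syndetic, quotes the result of Xu and Ye (\cite[Lemma 2.7]{XY22}) that $\fs$ satisfies (P2) to extract a $K$-separated syndetic $B'\subset B$, and takes $F'=A\cap B'$. Your alternative --- verifying the hypotheses of Proposition~\ref{prop:P2} directly (properness, stability under deleting finite sets, left-shift invariance, Ramsey property) --- is valid and stays within the paper's own toolkit, at the price of citing the classical partition regularity of piecewise syndetic sets rather than the external syndetic (P2) lemma. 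For (P1), the paper again exploits the decomposition $F=A\cap B$: it takes the $\ft$-(P1) pieces $A_n\subset A$ from Lemma~\ref{lem:t-P1-s-P2} and simply puts $F_n=A_n\cap B$, so that every infinite subunion is the intersection of a thick set with the syndetic set $B$; your argument instead goes through the characterization that $A\in\fps$ iff $K^{-1}A$ is thick for some $K\in\calpfg$, and pulls the thick pieces of $K^{-1}A$ back into $A$ after passing to a $K^{-1}K$-separated subsequence. The only caveat in your write-up is the converse direction of that characterization (if $K^{-1}D$ is thick then $D$ is an intersection of a thick set and a syndetic set), which you correctly flag; it is standard and easy to supply: setting $E:=K^{-1}D\cup D$, one has $D=E\cap\bigl(D\cup(G\setminus E)\bigr)$, where $E$ is thick and $D\cup(G\setminus E)$ is syndetic with constant $K\cup\{e\}$, so this is not a real gap. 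What your longer route buys in exchange: your pieces $A_n=KT_n\cap A$ are automatically nonempty, whereas the paper's pieces $F_n=A_n\cap B$ can be empty for some $n$ (a finite subset of a thick set need not meet a syndetic set), so the paper's proof tacitly requires passing to the subsequence of nonempty terms --- a small imprecision your construction avoids.
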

\begin{proof}
(1) $\fps$ satisfies (P1).

Let $F\in\fps$.
By the definition of $\fps$, 
there exists a thick set $A\subset G$ and a syndetic set $B\subset G$ such that $F=A\cap B$.
By Lemma \ref{lem:t-P1-s-P2} $\calf_t$ satisfy (P1), then there exists a sequence 
$\{A_n\}_{n=1}^\infty$ in $\calpfg$ such that 
\begin{itemize}
	\item for every $n\in\bbn$, $A_n\subset A$;
	\item for every $n,m\in\bbn$ with $n\neq m$, 
	$A_n\cap A_m=\emptyset$;
	\item for every strictly increasing sequence $\{n_k\}_{k=1}^\infty$
	in $\bbn$, $\bigcup_{k=1}^\infty A_{n_k}\in\calf_t$.
\end{itemize}
Let $F_n=A_n\cap B$ for $n\in\bbn$. 
Then $\{F_n\}_{n=1}^\infty$ is the sequence that satisfies (P1) for $F$. By the
arbitrariness of $F$, $\fps$ satisfies (P1).
 
(2) $\fps$ satisfies (P2).

Let $F\in\fps$.
By the definition of $\fps$, 
there exists a thick set $A\subset G$ and a syndetic set $B\subset G$ such that $F=A\cap B$.
For any $K\in\calpfg$,
by \cite[Lemma 2.7]{XY22} 
$\calf_{s}$ satisfy (P2), then 
there exists a subset $B'$ of $B$ such that $B'\in\calf_s$ and for any 
distinct $b_1,b_2\in B'\cup\{e\}$, $Kb_1\cap Kb_2=\emptyset$.
Let $F'=A\cap B'$, then $F'\subset F$ and $F'\in\fps$.
For any 
distinct $f_1,f_2\in F'\cup\{e\}$, $f_1,f_2\in B'\cup\{e\}$, thus 
$Kf_1\cap Kf_2=\emptyset$.
By the arbitrariness of $F$, $\fps$ satisfies (P2).
\end{proof}

A F{\o}lner sequence of a group $G$ can be used to define the density of a set $A\subset G$ in a way analogous to the definition given for a subset of non-negative integers of natural density. 

For any nonempty subsets $A,B$ in $G$. Denote $A\Delta B=(A\setminus B)\cup (B\setminus A)$. It is easy to verify that for any nonempty subsets $A,B,C,D$ in $G$, $(A\setminus B)\Delta (C\setminus D)\subset (A\Delta C)\cup (B\Delta D)$. 

\begin{defn}
Let $G$ be a countable infinite discrete group and $\{F_n\}$ be a sequence of nonempty finite subsets of $G$.
We say that $\{F_n\}$ is a \emph{F{\o}lner sequence} if for any $g\in G$, we have 
\[\lim_{n\to\infty}\frac{|(gF_n)\Delta F_n|}{|F_n|}=0,
\]
It is obviously that if $\{F_n\}$ is a F{\o}lner sequence, then $\lim_{n\to\infty}|F_n|=+\infty$. 

A countable infinite discrete group $G$ is called an \emph{amenable group} if 
there exists some F{\o}lner sequence $\{F_n\}$ in $G$. 
\end{defn}

\begin{defn}
Let $G$ be a countable infinite discrete amenable group and $\{F_n\}$ be a  F{\o}lner sequence in $G$.
For a subset $A$ of $G$, the \emph{upper density} of $A$ with respect to the F{\o}lner sequence $\{F_n\}$ 
is defined by
\[
    \bar{d}_{\{F_n\}}(A)=\limsup_{n\to\infty} \frac{1}{|F_n|}|F_n\cap A|.
\]
It is obvious that $0\leq \bar{d}_{\{F_n\}}(A) \leq 1$. For a given F{\o}lner sequence $\{F_n\}$, 
denote 
\[\fpud^{\{F_n\}}=\{A\subset G\colon \bar{d}_{\{F_n\}}(A)>0\}.\]

The \emph{upper Banach density} of $A$ is defined by
\[
    d^*(A)=\sup\{\bar{d}_{\{F_n\}}(A)\colon  
    \{F_n\} \text{ is a F{\o}lner sequence in }G\}.
\]
It is obvious that $0\leq d^*(A) \leq 1$. 
Denote $\fpubd=\{A\subset G\colon d^*(A)>0\}$.
\end{defn}

In the following we show that if $G$ is an amenable group and $\{F_n\}$ is an F{\o}lner sequence in $G$, then
$\fpud^{\{F_n\}}$ and $\fpubd$ satisfy the properties (P1) and (P2).

\begin{lem}\label{lem:fpud-fpubd-P1-P2}
Let $G$ be an amenable group and  $\{F_n\}$ be a  F{\o}lner sequence in $G$.
Then $\fpud^{\{F_n\}}$ and $\fpubd$ satisfy (P1) and (P2).
\end{lem}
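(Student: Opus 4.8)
The plan is to verify the two properties (P1) and (P2) for $\fpud$ and $\fpubd$ by exploiting the structure already established in Proposition~\ref{prop:P2} together with elementary properties of (upper Banach) density. The key observation is that both density families are proper Furstenberg families, and that (P2) will follow from Proposition~\ref{prop:P2} once I check the three hypotheses: invariance under removing a finite set, invariance under left translation, and the Ramsey property. Removing a finite set $K$ from $A$ changes $|F_n\cap A|$ by at most $|K|$, and since $|F_n|\to\infty$ (a F\o lner sequence in an infinite group has unbounded cardinality), this does not affect the $\limsup$, so condition~(1) holds for $\fpud$; the same bound handles $\fpubd$ by taking the supremum. For left translation, the F\o lner property gives $|gF_n\cap gA|=|F_n\cap A|$ and $\bigl||gF_n\cap gA|-|F_n\cap gA|\bigr|\le|gF_n\Delta F_n|=o(|F_n|)$, so $\bar d_{\{F_n\}}(gA)=\bar d_{\{F_n\}}(A)$, giving condition~(2); for $\fpubd$ one can alternatively note that $d^*$ is genuinely translation invariant (the family of F\o lner sequences is stable under translation). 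The Ramsey property is just subadditivity: $\bar d_{\{F_n\}}(A_1\cup A_2)\le \bar d_{\{F_n\}}(A_1)+\bar d_{\{F_n\}}(A_2)$, so if the union has positive density at least one piece does too.

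For (P1) I would use the fact that a set of positive upper density, or positive upper Banach density, can be decomposed into infinitely many disjoint finite blocks whose arbitrary infinite subunions retain positive density. Concretely, fix $A$ with $\bar d_{\{F_n\}}(A)=\delta>0$. Choosing a subsequence along which the $\limsup$ is realized, I would extract finite sets $A_n\subset A\cap F_{k_n}$ (for a rapidly increasing sequence $k_n$) with $|A_n|\ge (\delta/2)|F_{k_n}|$, arranged to be pairwise disjoint by discarding the bounded overlap with earlier blocks. Then for any strictly increasing $\{n_j\}$, the union $\bigcup_j A_{n_j}$ still meets $F_{k_{n_j}}$ in a set of size at least $(\delta/2)|F_{k_{n_j}}|$ minus a negligible correction, so its upper density along $\{F_{k_{n_j}}\}$ is at least $\delta/2>0$, placing the union in $\fpud$. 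For $\fpubd$ the same construction works because positive upper Banach density means positive $\bar d_{\{F_n\}}$ for \emph{some} F\o lner sequence, and one runs the block extraction along that sequence; the resulting union then has positive upper density for that sequence, hence positive upper Banach density.

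The main obstacle will be the bookkeeping in the (P1) construction: I must ensure simultaneously that the blocks $A_n$ are pairwise disjoint, that each carries a definite proportion of the corresponding F\o lner set, and that the disjointification does not erode the density below a fixed positive constant. The cleanest route is to make the indices $k_n$ grow fast enough that $|F_{k_{n-1}}|/|F_{k_n}|\to 0$, so that the cumulative mass of all earlier blocks is $o(|F_{k_n}|)$ and can be removed from $A_n$ while keeping $|A_n|\ge(\delta/2)|F_{k_n}|$; then disjointness is automatic and the density bound along infinite subunions survives. A subtle point worth flagging is that for $\fpubd$ one cannot fix a single F\o lner sequence in advance for different sets, but since (P1) and (P2) are each applied to one set $A$ at a time, selecting the witnessing F\o lner sequence per set is harmless, and the translation invariance of $d^*$ makes condition~(2) of Proposition~\ref{prop:P2} immediate rather than requiring the F\o lner estimate.
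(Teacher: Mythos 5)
Your proposal is correct and takes essentially the same approach as the paper: (P2) is deduced from Proposition~\ref{prop:P2} (whose hypotheses the paper declares ``easy to verify'' and you check in detail), and (P1) is proved by passing to a subsequence realizing the $\limsup$, imposing super-increasing growth of the F{\o}lner sets, and disjointifying, with the $\fpubd$ case reduced to running the $\fpud$ argument along a witnessing F{\o}lner sequence. The only bookkeeping difference is that the paper disjointifies the F{\o}lner sets themselves, setting $E_n=F_n'\setminus(F_1'\cup\dotsb\cup F_{n-1}')$ and verifying that $\{E_n\}$ is still a F{\o}lner sequence, whereas you disjointify the blocks $A\cap F_{k_n}$ directly, which makes that verification unnecessary.
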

\begin{proof}
(1) $\fpud^{\{F_n\}}$ satisfies (P1).
	
	Let $A\in\fpud^{\{F_n\}}$, then 
	\[
	\bar{d}_{\{F_n\}}(A)=\limsup_{n\to\infty} \frac{1}{|F_n|}|F_n\cap A|>0.
	\]
	Then there exists
	a F{\o}lner subsequence
	$\{F_n'\}\subset \{F_n\}$ such that
	\[
	\lim_{n\to\infty} \frac{1}{|F_n'|}|F_n'\cap A|>0.
	\]
	Without loss of generality we assume that
	$|F_n'|>(n+1)(|F_1'|+\dotsb+|F_{n-1}'|)$ for any $n\geq 2$.
	Define $E_1:=F_1'$ and 
	$E_n:=F_n'\setminus (F_1'\cup\dotsb\cup F_{n-1}')$ for any $n\geq 2$.
	It is clear that $E_i\cap E_j=\emptyset$ for any distinct $i,j\in \bbn$.

 \medskip 
 
 \noindent\textbf{Claim:} $\{E_n\}$ is a F{\o}lner sequence and  $\bar{d}_{\{E_n\}}(A)= \bar{d}_{\{F_n'\}}(A)$.

\begin{proof}[Proof of the Claim]
 Since
\begin{align*}
		(gE_n)\Delta E_n
		&=\big((gF_n')\setminus g(F_1'\cup F_2'\cup \dotsb \cup F_{n-1}')\big)\Delta \big(F_n'\setminus (F_1'\cup F_2'\cup \dotsb \cup F_{n-1}')\big)\\
		&\subset\big((gF_n')\Delta F_n'\big)\cup \big(g(F_1'\cup F_2'\cup \dotsb \cup F_{n-1}')\Delta (F_1'\cup F_2'\dotsc \cup F_{n-1}')\big), 
\end{align*}
we have 
\begin{align*}
\lim_{n\to\infty}\frac{|(gE_n)\Delta E_n|}{|E_n|}
		&\leq\lim_{n\to\infty}\frac{|(gF_n')\Delta F_n'|}{|E_n|}+
		\lim_{n\to\infty}\frac{|g(F_1'\cup \dotsc \cup F_{n-1}')\Delta (F_1'\cup \dotsc \cup F_{n-1}')|}{|E_n|}\\
		&\leq \lim_{n\to\infty}\frac{|(gF_n')\Delta F_n'|}{|F_n'|}+
		\lim_{n\to\infty}\frac{2|F_1'\cup \dotsc \cup F_{n-1}'|}{n(|F_1'|+\dotsc+|F_{n-1}'|)}=0.
\end{align*}
So by the definition $\{E_n\}$ is a F{\o}lner sequence. 

It is easy to verify that
\[\bar{d}_{\{E_n\}}(A)= 
\limsup_{n\to\infty} \frac{|(F_n'\setminus (F_1'\cup\dotsb\cup F_{n-1}'))\cap A|}{|F_n'\setminus (F_1'\cup\dotsb\cup F_{n-1}')|}
= \limsup_{n\to\infty} \frac{|F_n'\cap A|}{|F_n'|}=\bar{d}_{\{F_n'\}}(A). \qedhere \] 

Similarly, we can verify that for every strictly increasing sequence $\{n_k\}_{k=1}^\infty$ in $\bbn$, $\{E_{n_k}\}$ is a F{\o}lner sequence and  $\bar{d}_{\{E_{n_k}\}}(A)= \bar{d}_{\{F_{n_k}'\}}(A)$. 
\end{proof}

Let  $A_n:=E_n\cap A$.
 Then $A_n\subset A$ and
$A_n\cap A_{m}=\emptyset$ for every $n,m\in\bbn$ with $n\neq m$.
For any strictly increasing sequence $\{n_k\}$ in $\bbn$, 
\begin{align*}
	\bar{d}_{\{F_n\}}\biggl(\bigcup_{k=1}^\infty A_{n_k}\biggr)
	\geq\bar{d}_{\{F_n'\}}\biggl(\bigcup_{k=1}^\infty A_{n_k}\biggr)
	&=\limsup_{n\to\infty} \frac{1}{|F_n'|}\biggl|F_n'\cap \bigcup_{k=1}^\infty A_{n_k}\biggr|\\
		&\geq \limsup_{k\to\infty} \frac{1}{|E_{n_k}|}|E_{n_k}\cap A|\\
	&= \bar{d}_{\{E_{n_k}\}}(A). 
 \end{align*}	
 By the claim, $\bar{d}_{\{E_{n_k}\}}(A)= \bar{d}_{\{F_{n_k}'\}}(A)=\bar{d}_{\{F_n'\}}(A)>0$. So $\bigcup_{k=1}^\infty A_{n_k}\in \calf_{pud}^{\{F_n\}}$. 
Thus $\{A_n\}$ is the sequence satisfies (P1) for $A$. 
By the arbitrariness of $A$, $\fpud^{\{F_n\}}$ satisfies (P1). 

(2) $\fpubd$ satisfies (P1).
Let $A\in\fpubd$.
There exists a F{\o}lner sequence $\{F_n\}$ such that $\bar{d}_{\{F_n\}}(A)>0$. Then it follows from the proof of $\fpud^{\{F_n\}}$ satisfies (P1).

(3) It is easy to verify that $\fpud^{\{F_n\}}$ and $\fpubd$ satisfy all the conditions in Proposition \ref{prop:P2}. 
Then $\fpud^{\{F_n\}}$ and $\fpubd$ satisfy (P2).
\end{proof}

\section{Return time sets and product recurrence for \texorpdfstring{$G$}{G}-systems on compact metric spaces}
In this section we study recurrent time sets of points with some special recurrent property in a $G$-system $(X,G)$. 
Note that in this section, we always assume that $X$ is a compact metric space. 
Using the abstract properties (P1) and (P2) of Furstenberg families in Section 3 we give combinatorial characterizations of return time sets of $\calf$-recurrent points. We also apply those results to the study of product recurrence.

First we introduce $G$-system and recall some definitions.
By a compact (metric) \emph{$G$-system}, we mean a triple $(X,G,\Pi)$, where $X$ is a compact (metric) space with a metric $d$, $G$ is a countable infinite discrete group with an identity $e$ and 
$\Pi:G\times X\to X$ is a continuous map satisfying
$\Pi(e,x)=x$, for all $x\in X$
and $\Pi(h,\Pi(g,x))=\Pi(hg,x)$, for all $x\in X$, $h,g\in G$.
For convenience, we will use the pair $(X,G)$ instead of $(X,G,\Pi)$ to denote the $G$-system, and $gx:=\Pi(g,x)$ if the map $\Pi$ is unambiguous.
For two systems $(X,G)$ and $(Y,G)$, 
there is a natural product system $(X\times Y,G)$
as $g(x,y)=(gx,gy)$ for every $g\in G$ and $(x,y)\in X\times Y$.
A nonempty closed $G$-invariant subset $Y\subseteq X$ defines naturally a subsystem $(Y, G)$ of $(X, G)$.
A $G$-system $(X,G)$ is called \emph{minimal} if it contains no proper subsystem.
Each point belonging to some minimal subsystem of $(X,G)$ is called a \emph{minimal point}.
By the Zorn's Lemma, every $G$-system has a minimal subsystem.

Let $(X,G)$ be a $G$-system. For a point $x\in X$ and open subsets $U, V \subset X$, define
\[
    N(x,U)=\{g\in G\colon gx\in U\},
\]
and 
\[ N(U,V)=\{g\in G\colon gU\cap V\not=\emptyset\}.\]
The \emph{orbit} of a point $x\in X$ is the set
$Gx=\{gx:\ g\in G\}$, and the \emph{orbit closure} is $\overline{Gx}$.
Any point with dense orbit is called \emph{transitive}.
It is easy to see that  $(X,G)$ is minimal if and only if every point in $X$ is transitive. 
A $G$-system $(X,G)$ is called \emph{transitive} if for any nonempty open sets $U$ and $V$ of $X$, $N(U,V)\not=\emptyset$.
A point $x\in X$ is called \emph{recurrent} if for any neighborhood $U$ of $x$, $N(x,U)$ is infinite, 
and \emph{almost periodic} (it is also known as uniformly recurrent) if for any neighborhood $U$ of $x$, $N(x,U)$ is a syndetic set. 
It is well known that a point $x$ is almost periodic if and only if the system $(\overline{Gx},G)$ is minimal.

\begin{defn}
Let $G$ be a countable infinite discrete group.
For a sequence $\{p_i\}_{i=1}^\infty$ in $G$, we define the \emph{finite product} of $\{p_i\}_{i=1}^\infty$  by 
\[
FP(\{p_i\}_{i=1}^\infty) =\biggl\{\prod_{i\in \alpha} p_i\colon  \alpha
\text{ is a nonempty finite subset of }  \bbn \biggr\},
\]
where $\prod_{i\in \alpha} p_i$ is the product in increasing order of indices.
A subset $F$ of $G$
is called an \emph{IP-set} if
there exists a sequence $\{p_i\}_{i=1}^\infty$ in $G$ such that $FP(\{p_i\}_{i=1}^{\infty})$ is infinite and
$FP(\{p_i\}_{i=1}^{\infty})\subset F$.
Denote by $\fip$ the collection of all IP-subsets of $G$.
\end{defn}

Let $(X,G)$ be a $G$-system, $x\in X$ and $\calf\subset \calpg$ be a Furstenberg family.
We say that $x$ is \emph{$\calf$-recurrent} 
if for every neighborhood $U$ of $x$, $N(x,U)\in\calf$. We also called $\calf_{ps}$-recurrent point is \emph{piecewise syndetic recurrent} point. 
We will further study recurrent time sets of $\calf$-recurrent points.
First we introduce the Bernoulli shift $(\Sigma_2,  G)$ and symmetrically $\calf$-sets which are closely related to the corresponding recurrent time sets.

For a countable infinite discrete group $G$ with identity $e$, let $\Sigma_2=\{0,1\}^G$, endowed with the product topology on $\{0,1\}^{G}$, while $\{0,1\}$ is endowed with the discrete topology.
An element of $\Sigma_2$ is a function $z: G\to \{0,1\}$. 
Enumerate $G$ as $\{g_i\}_{i=0}^\infty$ with $g_0=e$.
A compatible metric $d$ on $\Sigma_2$ is defined by 
\begin{equation*}
d(z_1,z_2)=\begin{cases}
0, & z_1= z_2;\\
\frac{1}{2^k}, & k=\min \{i\in \bbn_0\colon z_1(g_i)\neq z_2(g_i)\},
\end{cases}
\end{equation*}
for any $z_1,z_2\in\Sigma_2$.
Then $(\Sigma_2, d)$ is a compact metric space. 

For any $K\in \calpfg$ and $u\in \{0,1\}^K$, define a \emph{cylinder} as follows: 
\[
 [u]=\{z\in\Sigma_2:z(g)=u(g)\ \text{for}\ g\in K\}.
\]
Then the collection of all cylinders
$\{[u]: u\in \{0,1\}^K\ \text{for some}\ K\in\calpfg\}$
forms a topological basis of the topology of $\Sigma_2$. For every $z\in \Sigma_2$ and $K\in \calpfg$, denote $z|_K\in \{0,1\}^K$ with $z|_K(g)=z(g)$ for every $g\in K$, then we can consider the 
cylinder $[z|_K]$. 
For convenience, we denote
$[1]=\{z\in \Sigma_2\colon z(e)=1\}$. 

For $g\in G$, define $T_g: \Sigma_2\to \Sigma_2$ by:
\[T_gz(t)=z(tg),\ \text{for any}\ t\in G.\]
Then $(\Sigma_2, (T_g)_{g\in G})$ is a $G$-system, which is called the \emph{symbolic dynamical system over $G$}. We briefly denote $(\Sigma_2, (T_g)_{g\in G})$ as $(\Sigma_2, G)$. 

For a subset $F\subset G$, let $\mathbf{1}_F\in\Sigma_2$ be the characteristic function of $F$,  that is, 
\begin{equation*}
\mathbf{1}_{F}(g)=\begin{cases}
1, & g\in F;\\
0, & \text{otherwise}. 
\end{cases}
\end{equation*}

In \cite{KRS22} Kennedy et al.\@ introduced the concept of symmetrically syndetic set and showed that 
the dual family of symmetrically syndetic sets is the family of dense orbit sets, which answered Question 9.6 in \cite{GTWZ21}. 
Recall that a subset $A\subset G$ is \emph{symmetrically syndetic} if for every pair of nonempty finite subsets $F_1\subset A$ and $F_2\subset G\setminus A$, 
the set 
\[
    \bigcap_{f_1\in F_1}f_1^{-1}A  \cap \bigcap_{f_2\in F_2}f_2^{-1}(G\setminus A)
\] 
is syndetic.
In \cite{XY22} Xu and Ye showed a subset of $G$ is symmetrically syndetic if and only if it is a return time set of an almost periodic point in the Bernoulli shift $(\Sigma_2,  G)$.

Similar to the symmetrically syndetic set, a general symmetrically set can be defined.
Given a Furstenberg family $\calf$ over $G$, 
a subset $A \subset G$ is  a \emph{symmetrically $\calf$-set}, if for any nonempty finite subsets $F_1\subset A$ and $F_2 \subset  G\setminus A$, 
\[
\bigcap_{f_1\in F_1}f_1^{-1}A  \cap \bigcap_{f_2\in F_2}f_2^{-1}(G\setminus A)\in\calf. 
\]

We show that the family of sets containing a symmetrically $\calf$-set coincides the collection of the return time sets of $\calf$-recurrent points.

\begin{prop}\label{prop:rec-sigma}
Let $G$ be a countable infinite discrete group with identity $e$ and $\calf\subset \calpg$ be a Furstenberg family. For a given subset $F$ of $G$ with $e\in F$, the following assertions are equivalent:
\begin{enumerate}
    \item $F$ contains a symmetrically $\calf$-set $F'$ with $e\in F'$. 
    \item there exists an $\calf$-recurrent point $x\in \{0,1\}^G$ with $x\in [1]$
    such that $N(x,[1])\subset F$;
    \item there exists a $G$-system $(X,G)$, an $\calf$-recurrent point $x\in X$ and a neighborhood $U$ of $x$ such that $N(x,U)\subset F$;
\end{enumerate}
\end{prop}
\begin{proof}
(1)$\Rightarrow$(2).
As $G$ is countable, there exists a sequence $\{G_n\}_{n=1}^\infty$ in $\calpfg$ such that
$e\in G_1$,
$G_n\subset G_{n+1}$ and $\bigcup_{n=1}^\infty G_n =G$.
Consider the Bernoulli shift $(\Sigma_2,G)$.
Define
\begin{equation*}
\mathbf{1}_{F'}(g)=\begin{cases}
1, & g\in F';\\
0, & \text{otherwise}. 
\end{cases}
\end{equation*} 
For any $n\in \mathbb{N}$, let 
\[
I_n=F'\cap G_n, \ 
J_n=G_n\setminus F'. 
\]
Then for any $n\in \mathbb{N}$, $I_n\sqcup J_n=G_n$, $[\mathbf{1}_{F'}|_{G_n}]=[\mathbf{1}_{F'}|_{I_n}]\cap [\mathbf{1}_{F'}|_{J_n}]$, 
\[
N(\mathbf{1}_{F'}, [\mathbf{1}_{F'}|_{G_n}])= \bigcap_{f_1\in I_n}f_1^{-1}F'\cap \bigcap_{f_2\in J_n}f_2^{-1}(G\setminus F')\in\calf. 
\]
Obviously that $\{[\mathbf{1}_{F'}|_{G_n}]\colon n\in \bbn_0\}$ is a neighborhood basis of $\mathbf{1}_{F'}$. By the arbitrariness of $n$, 
this shows that $\mathbf{1}_{F'}$ is an $\calf$-recurrent point in $(\Sigma_2,G)$.
It is clear that $N(\mathbf{1}_{F'},[1])=F'\subset F$.

(2)$\Rightarrow$(3). It is clear. 

(3)$\Rightarrow$(1). 
As $G$ is countable, there exists a sequence $\{G_n\}_{n=1}^\infty$ in $\calpfg$ such that
$e\in G_1$,
$G_n\subset G_{n+1}$ and $\bigcup_{n=1}^\infty G_n =G$.
According to $(3)$, 
there exists a $G$-system $(X,G)$, an $\calf$-recurrent point $x$ and a neighborhood $U$ of $x$ such that $F\supset N(x,U)$.
Since $G$ is countable,
$Gx$ is 
countable, we can  choose a neighborhood $V$ of $x$ such that 
$\overline{V}\subset U$ and for any $g\in G$,
either $gx\in V$ or 
$gx\in X\setminus \overline{V}$.

Let $F':=N(x,V)$. Then $e\in F'\subset N(x,U)$. 
Now it is sufficient to show that 
$F'$ is a symmetrically $\calf$-set.
For any $g\in G$, we can
choose a neighborhood 
$W_g$ of $x$ with 
$W_g\subset V$
such that
if $gx\in V$ then $gW_g\subset V$ and
if $gx\in X\setminus \overline{V}$ then $gW_g\subset X\setminus \overline{V}$. 
For any finite set $G_n$, 
$\bigcap_{g\in G_n}W_g$
is a neighborhood of $x$.
Denote $W:=\bigcap_{g\in G_n}W_g$. Then $N(x,W)\subset F$ and $N(x,W)\in\calf$.
Let $I_n=G_n\cap F'$, $J_n=G_n\setminus F'$.
We have
\[N(x,W)\subset \bigcap_{f_1\in I_n}f_1^{-1}F'  \cap \bigcap_{f_2\in J_n}f_2^{-1}(G\setminus F')\in \calf. \]
Thus 
$F'$ is a symmetrically $\calf$-set. 
\end{proof}

By the proof of Proposition~\ref{prop:rec-sigma}, we have the following consequence.

\begin{coro}
Let $G$ be a countable infinite discrete group with identity $e$ and $\calf\subset \calpg$ be a Furstenberg family. 
For a given subset $F$ of $G$ with $e\in F$,  the following assertions are equivalent:
\begin{enumerate}
    \item $F$ is a symmetrically $\calf$-set; 
    \item there exists an $\calf$-recurrent point $x\in \{0,1\}^G$ such that $N(x,[1])= F$; 
\end{enumerate}
\end{coro}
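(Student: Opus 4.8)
The plan is to derive both implications directly from the cylinder return-time computation already carried out in the proof of Proposition~\ref{prop:rec-sigma}, specializing the auxiliary set $F'$ there to $F$ itself. The whole point is that in the Bernoulli shift $(\Sigma_2,G)$ the return time sets of $\mathbf{1}_F$ to the basic cylinder neighborhoods are \emph{exactly} the sets appearing in the definition of a symmetrically $\calf$-set, so the two notions are forced to agree.

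For the ``only if'' direction, I would assume $F$ is a symmetrically $\calf$-set with $e\in F$ and run the construction of the implication $(3)\Rightarrow(2)$ of Proposition~\ref{prop:rec-sigma} with $F'$ taken to be $F$. Fixing an exhausting sequence $\{G_n\}$ of finite sets with $e\in G_1$, each basic neighborhood $[\mathbf{1}_F|_{G_n}]$ of $\mathbf{1}_F$ satisfies
\[
N(\mathbf{1}_F,[\mathbf{1}_F|_{G_n}])=\bigcap_{f_1\in F\cap G_n}f_1^{-1}F\cap\bigcap_{f_2\in G_n\setminus F}f_2^{-1}(G\setminus F),
\]
which lies in $\calf$ because $F\cap G_n$ and $G_n\setminus F$ are finite subsets of $F$ and $G\setminus F$ respectively. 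Hence $\mathbf{1}_F$ is $\calf$-recurrent, and since $N(\mathbf{1}_F,[1])=F$ holds exactly (not merely as an inclusion), the point $x:=\mathbf{1}_F$ is the required one.

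For the ``if'' direction, I would begin from an $\calf$-recurrent point $x\in\{0,1\}^G$ with $N(x,[1])=F$. The first observation is that $x$ is forced to equal $\mathbf{1}_F$: under the shift one has $(gx)(e)=x(g)$, so $g\in N(x,[1])$ if and only if $x(g)=1$, whence $N(x,[1])=F$ means $x=\mathbf{1}_F$. To verify that $F$ is a symmetrically $\calf$-set, I would fix arbitrary finite sets $F_1\subset F$ and $F_2\subset G\setminus F$, put $K=F_1\cup F_2$, and consider the cylinder neighborhood $[\mathbf{1}_F|_K]$ of $x$; $\calf$-recurrence then gives $N(x,[\mathbf{1}_F|_K])\in\calf$. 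The same computation as above, together with the identities $K\cap F=F_1$ and $K\setminus F=F_2$, identifies $N(\mathbf{1}_F,[\mathbf{1}_F|_K])$ with $\bigcap_{f_1\in F_1}f_1^{-1}F\cap\bigcap_{f_2\in F_2}f_2^{-1}(G\setminus F)$, so this set lies in $\calf$, which is precisely the defining condition of a symmetrically $\calf$-set.

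Neither direction presents a genuine obstacle, since both collapse to the cylinder computation already established in Proposition~\ref{prop:rec-sigma}; the only thing demanding care is consistent bookkeeping with the shift convention $T_g z(t)=z(tg)$, which is what yields $N(x,[1])=\{g\in G:x(g)=1\}$ and what matches $K\cap F$ and $K\setminus F$ with the index sets $F_1$ and $F_2$ in the definition. The hypothesis $e\in F$ plays no structural role beyond ensuring compatibility with $e\in N(x,[1])$, which always holds.
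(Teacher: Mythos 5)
Your proof is correct and is essentially the paper's own argument: the paper derives this corollary precisely by re-running the proof of Proposition~\ref{prop:rec-sigma} in the Bernoulli shift, where the clopen cylinder $[1]$ makes the return-time identity $N(\mathbf{1}_F,[1])=F$ exact, and your two directions are exactly that specialization (with the ``if'' direction stated a bit more explicitly via $x=\mathbf{1}_F$ and the cylinders $[\mathbf{1}_F|_{F_1\cup F_2}]$). The only quibble is your closing remark that $e\in N(x,[1])$ ``always holds''---it holds precisely when $x\in[1]$, which here is guaranteed by the hypothesis $e\in F$---but this does not affect the argument.
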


Though Proposition~\ref{prop:rec-sigma} connects the recurrent time sets of $\calf$-recurrent points with symmetrically $\calf$-sets, usually it is not easy to verify whether a set is a symmetrically $\calf$-set.
Under the conditions (P1) and (P2) introduced in Section 3, 
we have the following combinatorial characterization of recurrent time sets of $\calf$-recurrent points, which is the main result in this section.

\begin{thm}\label{thm:main2}
Let $G$ be a countable infinite discrete group with identity $e$ and $\calf\subset \calpg$ be a Furstenberg family satisfying (P1) and (P2). For a given $F\in\calf$ with $e\in F$, 
the following assertions are equivalent:
\begin{enumerate}
    \item there exists a $G$-system $(X,G)$, an $\calf$-recurrent point $x\in X$ and a neighborhood $U$ of $x$ such that $N(x,U)\subset F$;
    \item there exists a decreasing sequence $\{F_n\}$ of subsets of $F$ in $\calf$ such that for any $n\in\bbn$ and $f\in F_n$ there exists 
    $m\in\bbn$ such that $f F_m \subset F_n$.
\end{enumerate}
\end{thm}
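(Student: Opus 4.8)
The plan is to prove the two implications separately. The direction $(1)\Rightarrow(2)$ is purely dynamical and uses neither (P1) nor (P2); the direction $(2)\Rightarrow(1)$ is the substantial one, and it is where both abstract properties enter. For $(1)\Rightarrow(2)$ I would argue as follows. Assume (1), with $x$ an $\calf$-recurrent point and $U$ a neighborhood of $x$ satisfying $N(x,U)\subset F$. Since $X$ is a compact metric space, fix a decreasing neighborhood base $\{U_n\}$ at $x$ with $U_1\subset U$, and set $F_n=N(x,U_n)$. Then $\{F_n\}$ is decreasing, each $F_n\in\calf$ because $x$ is $\calf$-recurrent, and $F_n\subset F_1\subset N(x,U)\subset F$. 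To check the tree condition, take $n$ and $f\in F_n$, so that $fx\in U_n$; since $y\mapsto fy$ is a homeomorphism, the preimage of $U_n$ is an open set containing $x$, so there is a neighborhood $V$ of $x$ with $fV\subset U_n$. Choosing $m$ with $U_m\subset V$, every $g\in F_m$ gives $gx\in U_m\subset V$, hence $(fg)x=f(gx)\in fV\subset U_n$, i.e.\ $fg\in F_n$; thus $fF_m\subset F_n$, which is exactly (2).

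For $(2)\Rightarrow(1)$, by Proposition~\ref{prop:rec-sigma} it suffices to produce a symmetrically $\calf$-set $A\subset F$ with $e\in A$, since then $\mathbf{1}_A\in\{0,1\}^G$ is an $\calf$-recurrent point with $N(\mathbf{1}_A,[1])=A\subset F$, giving (1). Fix an exhaustion $e\in G_1\subset G_2\subset\cdots$ of $G$ by finite sets. Exactly as in the proof of Proposition~\ref{prop:rec-sigma}, and using that $\calf$ is a Furstenberg family, one sees that $A$ is symmetrically $\calf$ precisely when for every $n$ the window return set $W_n:=\{g\in G:\ \forall t\in G_n,\ t\in A\Leftrightarrow tg\in A\}$ lies in $\calf$: indeed, an arbitrary pair of finite sets $F_1\subset A,\ F_2\subset G\setminus A$ is contained in some $G_n$, and the corresponding intersection contains $W_n$. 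So the whole task reduces to building a single $A\subset F$ with $e\in A$ for which every $W_n\in\calf$.

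I would build $A$ by an inductive fusion controlled by the sequence $\{F_n\}$ from (2), arranging that each $W_n$ contains an explicitly $\calf$-large return set. At each stage one has decided $A$ on a finite window and holds a \emph{return set} $R\in\calf$ with $R\subset F$; the tree condition $fF_m\subset F_n$ is used to pass to a sub-return-set along which the pattern already fixed can be copied forward while every newly created element of $A$ is a product lying in $F$, which both keeps $A\subset F$ and witnesses the \emph{positive} half of the window conditions (the factors $f_1^{-1}A$). Property (P2) is then applied to thin the return set so that the translates of $G_n$ by distinct return elements are pairwise disjoint; this forces the copied pattern to be reproduced \emph{exactly}, including its $0$-entries, which is what secures the \emph{negative} half of the symmetrically $\calf$-condition (the factors $f_2^{-1}(G\setminus A)$). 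Finally property (P1) is used to choose, inside each return set, disjoint finite blocks whose infinite sub-unions remain in $\calf$, so that intersecting the choices across the infinitely many stages still yields, for each $n$, a genuine member of $\calf$ contained in $W_n$.

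The hard part is precisely this simultaneous bookkeeping. A single set $A$ must satisfy $W_n\in\calf$ for \emph{all} $n$ at once, while staying inside $F$ and containing $e$, and the three requirements pull against one another: containment $A\subset F$ forces new points to be placed only at products governed by $fF_m\subset F_n$; exactness of the reproduced pattern (the complement conditions) forces disjointness of the translated windows; and $\calf$-largeness must survive infinitely many thinnings. Property (2) supplies the forward-copying mechanism that keeps $A\subset F$, (P2) supplies the disjointness needed for exact reproduction, and (P1) is exactly the device that preserves membership in $\calf$ in the limit. I expect the main obstacle to be verifying that these three mechanisms can be invoked compatibly at every stage—in particular, that each thinning demanded by (P2) can be performed without destroying the product structure coming from (2) or the block largeness coming from (P1)—and keeping the induction hypotheses strong enough to push this through uniformly in $n$.
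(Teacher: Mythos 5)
Your proof of $(1)\Rightarrow(2)$ is correct and is essentially the paper's own argument (the paper uses the balls $B(x,\delta/n)$ where you use an abstract neighborhood base; nothing changes). Your reduction of $(2)\Rightarrow(1)$ via Proposition~\ref{prop:rec-sigma} to producing a single set $A\subset F$ with $e\in A$ whose window sets $W_n$ all lie in $\calf$ is also valid, and is equivalent to what the paper does directly in the Bernoulli shift. But the core of the theorem is the inductive construction itself, and your proposal stops exactly where that begins: you describe the intended roles of the tree condition, of (P2), and of (P1) --- correctly, as it happens, since these are precisely the roles they play in the paper's proof --- but you never specify the induction data or the induction hypotheses, and you explicitly flag as unresolved whether the three mechanisms can be invoked compatibly at every stage and uniformly in $n$. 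That is not a presentational gap; it is the whole difficulty. In particular your worry about ``$\calf$-largeness surviving infinitely many thinnings'' is a real one: if the blocks destined for the level-$n$ window were produced by separate applications of (P1) at different stages (to different members of $\calf$), nothing would let you conclude that their union across stages lies in $\calf$, because condition (3) of (P1) applies only to sub-unions of one fixed (P1)-sequence.

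The paper closes this with a triangular bookkeeping scheme that your sketch lacks. At stage $j$ it fixes the finite window $B_j=N(z^{(j-1)},[1])\cup G_j$, uses the tree condition (and finiteness of $N(z^{(j-1)},[1])\subset F_1$) to choose $m_j$ with $N(z^{(j-1)},[1])F_{m_j}\subset F_1$, applies (P2) with $K=B_j$ to obtain $F'_{m_j}\subset F_{m_j}$ in $\calf$ whose elements have pairwise disjoint $B_j$-translates, and then applies (P1) \emph{once} to $F'_{m_j}$, producing an infinite sequence of disjoint finite blocks $\{A_n^{(j)}\}_{n=1}^{\infty}$ reserved for level $j$ for the remainder of the construction. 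Each later stage $i>j$ draws the single block $A_i^{(j)}$ from this already-prepared sequence (adjusted to avoid finitely many overlaps) and copies the pattern $z^{(j-1)}|_{B_j}$ onto $B_jA_i^{(j)}$; the (P2)-disjointness makes the copy well defined and exact, including its $0$-entries, and the choice of $m_j$ keeps every new $1$-entry inside $F_1$. In the limit point $z$, the level-$j$ return set $N(z,[z|_{B_j}])$ contains $\bigcup_{i\geq j}A_i^{(j)}$, an infinite sub-union of a single (P1)-sequence, hence a member of $\calf$. This ``prepare one (P1)-sequence per level, draw one block per stage'' device is exactly what answers the compatibility question you leave open; without it, or some substitute for it, the induction you outline cannot be closed.
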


\begin{proof}
(1)$\Rightarrow$(2).
According to (1), there exists a $G$-system $(X,G)$, an $\calf$-recurrent point $x\in X$ and a neighborhood $U$ of $x$ such that $N(x,U)\subset F$. 
Then there exists $\delta>0$, such that $B(x,\delta)\subset U$.

For $n\in\bbn$, define $F_n:=N(x,B(x,\frac{\delta}{n}))$. It is clear that $F_{n+1}\subset F_n \subset F$ and $F_n\in\calf$ for $n\in\bbn$. 
Now fix $F_n$ and $f\in F_n$, then
$fx\in B(x,\frac{\delta}{n})$ and
$x\in f^{-1}B(x,\frac{\delta}{n})$.
It is clear that $f^{-1}B(x,\frac{\delta}{n})$ is a neighborhood of $x$, thus
there exists
$m\in\bbn$ such that 
$B(x,\frac{\delta}{m})\subset f^{-1}B(x,\frac{\delta}{n})$.
Then we have $fN(x,B(x,\frac{\delta}{m}))\subset N(x,B(x,\frac{\delta}{n}))$, i.e.
$fF_m\subset F_n$.

(2)$\Rightarrow$(1).
As $G$ is countable, fix a sequence $\{G_n\}_{n=1}^\infty$ in $\calpfg$
such that $G_1=\{e\}$, 
$G_n\subset G_{n+1}$ and $\bigcup_{n=1}^\infty G_n =G$.
Without loss of generality assume that $e\in F_n$ for any $n\in\bbn$. 
Let $m_1=1$, $F'_{1}=F_1$ and $B_1=\{e\}$.
Since $\calf$ satisfies the condition (P1), for $F_1\in \calf$, 
	there exists a sequence 
	$\{C_n^{(1)}\}_{n=1}^\infty$ in $\calpfg$ such that 
		\begin{itemize}
		\item for every $n\in\bbn$, 
  $C_n^{(1)}\subset F_1$;
		\item for every $n,n'\in\bbn$ with $n\neq n'$, 
		$C_n^{(1)}\cap C_{n'}^{(1)}=\emptyset$;
		\item for every strictly increasing sequence $\{n_i\}_{i=1}^\infty$
		in $\bbn$, $\bigcup_{i=1}^\infty C_{n_i}^{(1)}\in\calf$.
	\end{itemize}
 Let $A^{(1)}_1=C^{(1)}_1$. Consider the symbolic dynamical system $(\Sigma_2,G)$.
First, we define $z^{(1)}\in\Sigma_2$ as follows:
\begin{equation*}
z^{(1)}(g)=\begin{cases}
1, & g=e;\\
1, & g\in A_1^{(1)};\\
0, & \text{otherwise}. 
\end{cases}
\end{equation*}
Let $k\in\bbn$ and assume that we have chosen $\{z^{(i)}\}_{i=1}^k$ in $\Sigma_2$, 
$\{F_{m_i}\}_{i=1}^k$ and 
$\{F_{m_i}'\}_{i=1}^k$ in $\calf$, $\{B_n\}_{n=1}^k$, $\{C_n^{(i)}\}_{n=1}^\infty$, $i=1,\dotsc,k$ and $\{A_n^{(i)}\}_{n=1}^k$, $i=1,\dotsc,k$ in $\mathcal{P}_f(G)$, $\{\{t(j,i)\}_{j=1}^i\}_{i=2}^k$ in $\bbn$ 
 satisfying the following hypotheses for $i\in\{1,2,\dotsc,k\}$.
\begin{enumerate}
	\item if $i>1$, then $B_i= N(z^{(i-1)},[1])\cup G_i$;
	\item $N(z^{(i)},[1])\in\mathcal{P}_f(F_1)$;
	\item if $i>1$, then $N(z^{(i)},[1])= N(z^{(i-1)},[1])\cup A_{i}^{(1)}\cup  \bigcup_{j=1}^{i-1}(N(z^{(j)},[1])A_{i}^{(j+1)})$;
	\item $F_{m_i}'\subset F_{m_i}$;
	\item if $i>1$, then $N(z^{(i-1)},[1]) F_{m_i}\subset F_1$;
	\item for any distinct $f_1,f_2\in F_{m_i}'$, $B_if_1\cap B_if_2=\emptyset$.
    \item for every $n\in\bbn$, $C_n^{(i)}\subset F_{m_i}'$;
	\item for every $n,n'\in\bbn$ with $n\neq n'$, $C_n^{(i)}\cap C_{n'}^{(i)}=\emptyset$; 
	\item for every strictly increasing sequence $\{n_t\}_{t=1}^\infty$ in $\bbn$, $\bigcup_{t=1}^\infty C_{n_t}^{(i)}\in\calf$;
		\item 
		$t(1,1)=1$; 
		\item if $i\geq 2$ and $1\leq j\leq i-1$, then 
		$t(j,i)>t(j,i-1)$; 
		\item if $i\geq 2$, then $t(i,i)>i-1$;
		
		\item if $i\geq 2$ and $1\leq j\leq i-1$, then 
		$A^{(i)}_j=C^{(i)}_j$; 
		 
		\item if $1\leq j\leq i$, then 
		$A^{(j)}_i=C^{(j)}_{t(j,i)}$;
		
		\item if $i\geq 2$, $C_{t(1,i)}^{(1)}\cap B_i=\emptyset$,
		
		$C_{t(2,i)}^{(2)}\cap (B_2^{-1}B_i\cup B_2^{-1}A_i^{(1)})=\emptyset$,
		
		$\dotsc$, 
		
		$C_{t(i,i)}^{(i)}\cap (B_i^{-1}B_i\cup B_i^{-1}A_i^{(1)}\cup B_i^{-1}B_2A_i^{(2)} \dotsb\cup  
		B_i^{-1}B_{i-1}A_i^{(i-1)})=\emptyset$;
		
		\item if $i\geq 3$, $C_{t(1,i)}^{(1)}\cap (\bigcup_{t=2}^{i-1}(B_t A^{(t)}_t\cup\dotsb\cup B_t A^{(t)}_{i-1}))=\emptyset$,
		
		$C_{t(2,i)}^{(2)}\cap (\bigcup_{t=2}^{i-1}(B_2^{-1}B_t A^{(t)}_t\cup\dotsb\cup B_2^{-1}B_t A^{(t)}_{i-1}))=\emptyset$,
		
		$\dotsc$,
		
		$C_{t(i,i)}^{(i)}\cap (\bigcup_{t=2}^{i-1}(B_i^{-1}B_t A^{(t)}_t\cup\dotsb\cup B_i^{-1}B_t A^{(t)}_{i-1}))=\emptyset$;  

	\item if $i>1$, then $z^{(i)}|_{B_i}=z^{(i-1)}|_{B_i}$;
	
	\item if $g\in A_i^{(1)}$, then $z^{(i)}(g)=1$;

	\item if $2\leq j\leq i$, $h\in B_j$ and $g\in hA_i^{(j)}$, then
	$z^{(i)}(g)=z^{(j-1)}(h)$;
	
	\item if $i>1$ and $g\in G\setminus (B_i\cup A_i^{(1)}\cup \bigcup_{j=2}^i B_j A_i^{(j)})$, then  $z^{(i)}(g)=0$.
\end{enumerate}
All hypotheses are satisfied for $i=1$, all but (2), (4), (6), (7),
(8), (9), (14) and (18) vacuously.

We now show that all hypotheses satisfied for $i=k+1$. 
By hypotheses (2), 
$N(z^{(k)},[1])\in \mathcal{P}_f(F_1)$.
 For any $f\in N(z^{(k)},[1])$, by (2) there exists $m=m(f)\in\bbn$ such that $f F_m\subset F_1$. Let $m_{k+1}=\max \{m(f)\colon f\in N(z^{(k)},[1])\}$. 
Since $\{F_n\}$ is a decreasing sequence, $fF_{m_{k+1}}\subset F_1$ for every $f\in N(z^{(k)},[1])$.

Let $B_{k+1}=N(z^{(k)},[1])\cup G_{k+1}$.
By the condition (P2), for $F_{m_{k+1}}\in \calf$ and $B_{k+1}\in \mathcal{P}_f(G)$, there exists $F_{m_{k+1}}'\subset F_{m_{k+1}}$ with $F_{m_{k+1}}'\in\calf$ such that for any distinct $f_1,f_2\in F_{m_{k+1}}'$, $B_{k+1}f_1\cap B_{k+1}f_2=\emptyset$.
Since $F_{m_{k+1}}'\in\calf$,  again by the condition (P1), 
there exists a sequence $\{C_n^{(k+1)}\}_{n=1}^\infty$ in $\calpfg$ such that
\begin{itemize}
	\item for every $n\in\bbn$, $C_n^{(k+1)}\subset F_{m_{k+1}}'$;
	\item for every $n,n'\in\bbn$ with $n\neq n'$, 
	$C_n^{(k+1)}\cap C_{n'}^{(k+1)}=\emptyset$;
	\item for every strictly increasing sequence $\{n_t\}_{t=1}^\infty$
	in $\bbn$, $\bigcup_{t=1}^\infty C_{n_t}^{(k+1)}\in\calf$.
\end{itemize}
Let $A^{(k+1)}_j=C^{(k+1)}_j$ for $1\leq j\leq k$.
Since $B_{k+1}\in \mathcal{P}_f(G)$ and
$B_j A^{(j)}_j\cup\dotsb\cup B_j A^{(j)}_{k}\in \mathcal{P}_f(G)$ for
$k\geq 2$, $j=2,\dotsc,k$ and the elements in $\{C_n^{(1)}\}_{n=1}^\infty$ are pairwise disjoint, there exists $t(1,k+1)>t(1,k)$ such that $C_{t(1,k+1)}^{(1)}\cap B_{k+1}=\emptyset$ and $C_{t(1,k+1)}^{(1)}\cap (\bigcup_{j=2}^{k}(B_j A^{(j)}_j\cup\dotsb\cup B_j A^{(j)}_{k}))=\emptyset$ for $k\geq 2$. Let $A_{k+1}^{(1)}=C^{(1)}_{t{(1,k+1)}}$.
Similarly there exists
$t(j,k+1)>t(j,k)$ for $2\leq j\leq k$ such that
\[C^{(j)}_{t{(j,k+1)}}\cap (B_{j}^{-1}B_{k+1}\cup B_{j}^{-1}B_1A_{k+1}^{(1)}\cup B_{j}^{-1}B_2A_{k+1}^{(2)} \dotsb\cup  
B_{j}^{-1}B_{j-1}A_{k+1}^{(j-1)})=\emptyset\]
and 
\[C^{(j)}_{t{(j,k+1)}}\cap (\bigcup_{t=2}^{k}(B_{j}^{-1}B_t A^{(t)}_t\cup\dotsb\cup B_{j}^{-1}B_t A^{(t)}_{k}))=\emptyset.\]  
Let $A^{(j)}_{k+1}=C^{(j)}_{t(j,k+1)}$ for $2\leq j\leq k$.
And there exists $t{(k+1,k+1)}>k$ such that
\[C^{(k+1)}_{t{(k+1,k+1)}}\cap (B_{k+1}^{-1}B_{k+1}\cup B_{k+1}^{-1}B_1A_{k+1}^{(1)}\cup B_{k+1}^{-1}B_2A_{k+1}^{(2)} \dotsb\cup  
B_{k+1}^{-1}B_{k}A_{k+1}^{(k)})=\emptyset\]
and 
\[C^{(k+1)}_{t{(k+1,k+1)}}\cap (\bigcup_{t=2}^{k}(B_{k+1}^{-1}B_t A^{(t)}_t\cup\dotsb\cup B_{k+1}^{-1}B_t A^{(t)}_{k}))=\emptyset\ \text{for} \ k\geq 2.\]
Let $A_{k+1}^{(k+1)}=C^{(k+1)}_{t{(k+1,k+1)}}$.

We claim that we can define $z^{(k+1)}\in\Sigma_2$ as required by 
hypotheses $(17)-(20)$ for $i=k+1$. That is, 
\begin{enumerate}
\item[(17)] $z^{(k+1)}|_{B_{k+1}}=z^{(k)}|_{B_{k+1}}$;

\item[(18)] if $g\in A_{k+1}^{(1)}$, then $z^{(k+1)}(g)=1$;

\item[(19)] if $2\leq j\leq k+1$, $h\in B_{j}$ and $g\in hA_{k+1}^{(j)}$, then
$z^{(k+1)}(g)=z^{(j-1)}(h)$;

\item[(20)] if $g\in G\setminus \{B_{k+1}\cup A_{k+1}^{(1)}\cup \bigcup_{j=2}^{k+1} B_j A_{k+1}^{(j)}\}$, then  $z^{(k+1)}(g)=0$.
\end{enumerate}

By the construction of $B_{k+1}$, $C^{(1)}_{t{(1,k+1)}}$ and $A_{k+1}^{(1)}$, 
we have 
$C^{(1)}_{t{(1,k+1)}}\cap B_{k+1}=\emptyset$ and $A_{k+1}^{(1)}=C^{(1)}_{t{(1,k+1)}}$, 
thus $A_{k+1}^{(1)}\cap B_{k+1}=\emptyset$
which implies that 
(17) cannot conflict with (18). 

For $1\leq j\leq k+1$, by the construction of $B_{k+1}$, $C^{(j)}_{t{(j,k+1)}}$ and $A_{k+1}^{(j)}$, $C^{(j)}_{t{(j,k+1)}}\cap B_{j}^{-1}B_{k+1}=\emptyset$ and 
$A_{k+1}^{(j)}=C^{(j)}_{t{(j,k+1)}}$, thus $B_{k+1}\cap B_jA_{k+1}^{(j)}=\emptyset$ for $2\leq j\leq k+1$, which implies that (17) cannot conflict with (19).

For $1\leq j\leq k+1$, by the construction of $B_{k+1}$, $C^{(j)}_{t{(j,k+1)}}$ and $A_{k+1}^{(j)}$, 
$C_{t(j,k+1)}^{(j)}\cap B_j^{-1}A_{k+1}^{(1)}=\emptyset$ for $2\leq j\leq k+1$ and $A_{k+1}^{(j)}=C_{t(j,k+1)}^{(j)}$, thus 
$A_{k+1}^{(1)}\cap B_jA_{k+1}^{(j)}=\emptyset$ for $2\leq j\leq k+1$, which implies that $(18)$ cannot conflict with any part of (19).

Finally, we show that any part of (19) cannot conflict with each other.
By the construction of $B_{k+1}$, $C^{(j)}_{t{(j,k+1)}}$ and $A_{k+1}^{(j)}$, 
$C^{(j)}_{t{(j,k+1)}}\cap  (B_{j}^{-1}B_1A_{k+1}^{(1)}\cup B_{j}^{-1}B_2A_{k+1}^{(2)} \dotsb\cup  
B_{j}^{-1}B_{j-1}A_{k+1}^{(j-1)})=\emptyset$
for $2\leq j\leq k+1$. Therefore 
for any $2\leq j\neq j'\leq k+1$, 
$B_jA_{k+1}^{(j)}\cap B_{j'}A_{k+1}^{(j')}=\emptyset$.

Now all hypotheses are satisfied directly for $i=k+1$ except (2) and (3).
By the construction of $z^{(k+1)}$, 
\[N(z^{(k+1)},[1])=N(z^{(k)},[1])\cup A_{k+1}^{(1)}\cup\bigcup_{j=1}^{k}N(z^{(j)},[1])A_{k+1}^{(j+1)},\]
which implies that the hypothesis $(3)$ holds for $i=k+1$.

By the hypothesis (2) for $i=k$, $N(z^{(k)},[1])\in\mathcal{P}_f(F_1)$.
By the hypotheses $(4)$, $(7)$ and $(14)$,  $A_{k+1}^{(1)}=C_{t(1,k+1)}^{(1)}\subset F'_{m_1}\subset F_{m_1}=F_1$. Since $\{C_n^{(1)}\}_{n=1}^\infty$ is in $\mathcal{P}_f(G)$,  $A_{k+1}^{(1)}\in \mathcal{P}_f(F_1)$.
By the hypothesis (5), 
$N(z^{(j)},[1]) F_{m_{j+1}}\subset F_1$ for $j=1,\dotsc,k$. 
By the hypotheses (4), $(7)$ and $(14)$,  $A_{k+1}^{(j+1)}=C_{t(j+1,k+1)}^{(j+1)}\subset F'_{m_{j+1}}\subset F_{m_{j+1}}$ for $j=1,\dotsc,k$. 
Thus for $j=1,\dotsc,k$, $N(z^{(j)},[1])A_{k+1}^{(j+1)}\subset F_1$.
By the hypothesis (2) for $i=1,\dotsc,k$ and since
$A_{k+1}^{(j+1)},j=1,\dotsc,k$ is in $\mathcal{P}_f(G)$,
we have $N(z^{(j)},[1])A_{k+1}^{(j+1)}\in \mathcal{P}_f(F_1)$ for $j=1,\dotsc,k$.
In conclusion, 
\[N(z^{(k+1)},[1])=N(z^{(k)},[1])\cup A_{k+1}^{(1)}\cup\bigcup_{j=1}^{k}N(z^{(j)},[1])A_{k+1}^{(j+1)}\in \mathcal{P}_f(F_1),\]
which implies that the hypothesis (2) holds for $i=k+1$.

We now establish some facts.
\begin{enumerate}
\item[(\romannumeral1)] if $1\leq r<j$, then for each $h\in B_{r+1}$ and each $g\in A_{r+1}^{(r+1)}\cup A_{r+2}^{(r+1)}\cup \dotsc\cup A_{j}^{(r+1)}$, $z^{(j)}(hg)=z^{(r)}(h)$.
\end{enumerate}

By the hypothesis (19), for each $h\in B_{r+1}$ and each $g\in A_{j}^{(r+1)}$,
$z^{(j)}(hg)=z^{(r)}(h)$.
If $j=r+1$, then the proof is finished. Otherwise 
$j>r+1\geq 2$ and thus $j\geq 3$, to see that 
for each $h\in B_{r+1}$ and each $g\in A_{j-1}^{(r+1)}$,
$z^{(j)}(hg)=z^{(r)}(h)$.
We will first show that for each $h\in B_{r+1}$ and each $g\in A_{j-1}^{(r+1)}$, 
$z^{(j)}(hg)=z^{(j-1)}(hg)$.
By the hypothesis (17), $z^{(j)}|_{B_j}=z^{(j-1)}|_{B_j}$. 
By the hypothesis (1),
$B_j= N(z^{(j-1)},[1])\cup G_j$.
So 
$z^{(j-1)}{(hg)}=1$ implies $z^{(j)}(hg)=1$ for $g\in A_{j-1}^{(r+1)}$ and $h\in B_{r+1}$.
It is sufficient to show that
$z^{(j-1)}{(hg)}=0$ implies $z^{(j)}(hg)=0$ for $g\in A_{j-1}^{(r+1)}$ and $h\in B_{r+1}$. 
To prove this we note that
by the hypotheses (14) and (16), 
$A_j^{(1)}\cap B_{r+1}A_{j-1}^{(r+1)}=\emptyset$,
$B_tA_j^{(t)}\cap B_{r+1}A_{j-1}^{(r+1)}=\emptyset$ for $2\leq t\leq j$. Now by the hypothesis (19), for each $h\in B_{r+1}$ and each $g\in A_{j-1}^{(r+1)}$,
$z^{(j)}(hg)=z^{(j-1)}(hg)=z^{(r)}(h)$.
If $j-1=r+1$ then the proof is finished.
Otherwise $j-1>r+1\geq 2$ and thus $j\geq 4$, again we can show that 
for each $h\in B_{r+1}$ and each $g\in A_{j-2}^{(r+1)}$,
$z^{(j)}(hg)=z^{(r)}(h)$.
By induction the proof is finished.

\smallskip

Since $\{z^{(i)}\}_{i=1}^\infty$ is a sequence in compact space $\Sigma_2$, we may pick a cluster point 
$z\in \Sigma_2$ of the sequence $\{ z^{(i)}\}_{i=1}^\infty$.

\begin{enumerate}
	\item[(\romannumeral2)] For each $j\in\bbn$, $z|_{B_{j+1}}=z^{(j)}|_{B_{j+1}}$.
\end{enumerate}

To establish (\romannumeral2), let $j\in\bbn$ and let $g\in B_{j+1}$. Since $z$ is a cluster point of the  sequence $\{z^{(i)}\}_{i=1}^\infty$ and $[z|_{B_{j+1}}]$ is a neighborhood of $z$, we can pick 
$i>j$ such that $z^{(i)}\in [z|_{B_{j+1}}]$. Then 
$z^{(i)}|_{B_{j+1}}=z|_{B_{j+1}}$. By the construction $B_n\subset B_{n+1}$ for any $n\in\bbn$ and $\bigcup_{n=1}^\infty B_n\supset \bigcup_{n=1}^\infty G_n=G$.
So by hypotheses $(17)$, $z^{(j)}|_{B_{j+1}}=z^{(i)}|_{B_{j+1}}=z|_{B_{j+1}}$.
\smallskip

As a consequence of (\romannumeral2), for each $r\in\bbn$, $[z^{(r)}|_{B_{r+1}}]$ is a neighborhood of $z$ so $\{[z^{(r)}|_{B_{r+1}}]:r\in\bbn\}$ is a neighborhood basis for $z$.

\begin{enumerate}
	\item[(\romannumeral3)] If $1\leq r< i$, then $A_{r+1}^{(r+1)}\cup A_{r+2}^{(r+1)}\cup \dotsc\cup A_{i}^{(r+1)}\subset N(z, [z^{(r)}|_{B_{r+1}}])$.
\end{enumerate}

To establish (\romannumeral3), for any $g\in A_{r+1}^{(r+1)}\cup A_{r+2}^{(r+1)}\cup \dotsc\cup A_{i}^{(r+1)}$ and for any $h\in B_{r+1}$,
if $z^{(i)}(hg)=1$, then $hg\in N(z^{(i)},[1])\subset B_{i+1}$.
By (\romannumeral1), $A_{r+1}^{(r+1)}\cup A_{r+2}^{(r+1)}\cup \dotsc\cup A_{i}^{(r+1)}\subset N(z^{(i)}, [z^{(r)}|_{B_{r+1}}])$, then $z^{(i)}(hg)=z^{(r)}(h)$.
By (\romannumeral2), $z|_{B_{i+1}}=z^{(i)}|_{B_{i+1}}$, thus 
we have $z(hg)=z^{(i)}(hg)=z^{(r)}(h)$.

If $z^{(i)}(hg)=0$ and $hg\in B_{i+1}$, then we still have $z(hg)=z^{(i)}(hg)=z^{(r)}(h)$. 
If $z^{(i)}(hg)=0$ and $hg\not\in B_{i+1}$, since $B_n\subset B_{n+1}$ for any $n\in\bbn$ and $\bigcup_{n=1}^\infty B_n\supset \bigcup_{n=1}^\infty G_n=G$, $hg\in B_{t}$ for some $t>i+1$. Note that $1\leq r<i<i+1<t$, by (\romannumeral1), 
\begin{align*}
    N(z^{(t)}, [z^{(r)}|_{B_{r+1}}])
    &\supset  A_{r+1}^{(r+1)}\cup A_{r+2}^{(r+1)}\cup \dotsc\cup A_{t}^{(r+1)}\\
    &\supset A_{r+1}^{(r+1)}\cup A_{r+2}^{(r+1)}\cup \dotsc\cup A_{i}^{(r+1)}.
\end{align*}
By (\romannumeral2), 
$z|_{B_{t}}=z^{(t)}|_{B_{t}}$,
then we have $z(hg)=z^{(t)}(hg)=z^{(r)}(h)$ for $g\in A_{r+1}^{(r+1)}\cup A_{r+2}^{(r+1)}\cup \dotsc\cup A_{i}^{(r+1)}$ and $h\in B_{r+1}$.

In conclusion, for any $g\in A_{r+1}^{(r+1)}\cup A_{r+2}^{(r+1)}\cup \dotsc\cup A_{i}^{(r+1)}$ and for any $h\in B_{r+1}$, 
we have $T_g(z)(h)=z(hg)=
z^{(i)}(hg)=z^{(r)}(h)$, which implies that $g\in N(z, [z^{(r)}|_{B_{r+1}}])$.

\smallskip

Now we claim that $z$ is a $\calf$-recurrent point of $\Sigma_2$. To see this, let $R$
be a neighborhood of $z$ and pick $r\in\bbn$ such that $[z^{(r)}|_{B_{r+1}}]\subset R$. Thus we have
\[N\big(z,R\big)\supset N\big(z,[z^{(r)}|_{B_{r+1}}]\big)\supset 
\bigcup_{i=r+1}^{\infty} (\bigcup_{j=r+1}^{i}A_{j}^{(r+1)})=\bigcup_{i=r+1}^{\infty} A_{i}^{(r+1)}\]
where the second inclusion holds by $(\romannumeral3)$. By the construction of $\{A_n^{(r+1)}\}_{n=1}^\infty$, 
\[\bigcup_{i=r+1}^{\infty} A_{i}^{(r+1)}\in\calf.\]
So $z$ is a $\calf$-recurrent point of $\Sigma_2$.

By (\romannumeral2) $[1]=\{z\in \Sigma_2\colon z(e)=1\}$ is a neighborhood of $z$. 
We conclude the proof by showing that $N(z,[1])\subset F$.
Note that $N(z,[1])=\{g\in G: T_gz\in[1]\}=\{g\in G: z(g)=1\}$. By the construction $B_n\subset B_{n+1}$ for any $n\in\bbn$ and $\bigcup_{n=1}^\infty B_n\supset \bigcup_{n=1}^\infty G_n=G$. Thus for any $g\in N(z,[1])$, there exists $r\in\bbn$ such that $g\in B_{r+1}$, then 
by $(\romannumeral2)$ $z(g)=z^{(r)}(g)=1$, which implies that $g\in N(z^{r},[1])$. 
So $N(z,[1])\subset \bigcup_{r=1}^\infty N(z^{(r)},[1])$. By hypothesis $(1)$, for each $r\in\bbn$, $N(z^{(r)},[1])\subset F_1$ so $N(z,[1])\subset F_1\subset F$.
\end{proof}

\begin{rem}
In Section 3,  we showed that
$\fps$ and $\finf$ satisfy the properties (P1) and (P2). 
If $G$ is amenable and  $\{F_n\}$ is a  F{\o}lner sequence in $G$, $\fpud^{\{F_n\}}$ and $\fpubd$ also
satisfy the properties (P1) and (P2). 
So we can apply Theorem~\ref{thm:main2} to Furstenberg families $\fps$, $\finf$,
$\fpud^{\{F_n\}}$ and $\fpubd$. 
\end{rem}

\begin{defn}
Let $(X,G)$ be a $G$-system. 
A pair $(x_1,x_2)\in X\times X$ is said to be \emph{proximal} if $\inf_{g\in G}d(gx_1, gx_2)=0$, and \emph{distal} if it is not proximal. 
A point $x\in X$ is called \emph{distal} if for any $y\in \overline{Gx}$ with $y\neq x$,
$(x,y)$ is  distal.
\end{defn}

\begin{defn}
If for any $G$-system $(Y,G)$ and any recurrent point $y\in Y$, $(x,y)$ is recurrent in the product system $(X\times Y,G)$, 
then we say that $x$ is \emph{product recurrent}.
\end{defn}

\begin{defn}
Let $G$ be a countable infinite discrete group. 
A subset $F\subset G$ is called \emph{central} if there exists a  
$G$-system $(X,G)$, a point $x\in X$, an almost periodic point $y\in X$ and a neighborhood $U$ of $y$ such that $(x,y)$ is proximal and 
$N(x,U)\subset F$. Denote by $\fcen$ the collection of all central subsets of $G$.

A subset $A\subset G$ is called IP$^*$-set (resp.\@ central$^*$-set) if for any IP-subset (reps.\@ central subset) $F$ of $G$, $A\cap F\not=\emptyset$.
Denote by $\fip^*$ and $\fcen^*$ the collection of all IP$^*$-subsets and central$^*$-subset of $G$.
It is not hard to see that $\ft\subset \fcen \subset \fip$ 
and $\fip^*\subset\fcen^*\subset\fs$, see e.g.\@ \cite{HS12}.
\end{defn}

The following characterizations of distal points were proved by Furstenberg in \cite{F81} for topological dynamical systems and 
\cite{EEN01} for $G$-systems (see Corollaries 5.30 and 5.36 of \cite{EEN01}). 

\begin{thm}\label{thm:distal}
Let $(X,G)$ be a $G$-system and $x\in X$.
Then the following assertions are equivalent:
	\begin{enumerate}
		\item $x$ is a distal point;
		\item $x$ is an $\fip^*$-recurrent point;
		\item $x$ is an $\fcen^*$-recurrent point;
        \item $x$ is a product recurrent point.
	\end{enumerate}
\end{thm}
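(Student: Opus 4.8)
The plan is to pass to the action of the Stone--\v{C}ech compactification $\beta G$ on $X$ and to read each of the four conditions off the algebraic structure of $\beta G$. Writing $px$ for the limit of $gx$ along the ultrafilter $p\in\beta G$, one has $px\in U$ if and only if $N(x,U)\in p$ (for open $U$), so that $px=x$ holds exactly when $N(x,U)\in p$ for every neighborhood $U$ of $x$. It is well known (see \cite{HS12}) that a subset of $G$ is an IP-set precisely when it belongs to some idempotent of $\beta G$, and---since $\fcen\subset\fip$ and every central set lies in a minimal idempotent---that it is central precisely when it belongs to some minimal idempotent. Dualizing, $N(x,U)$ is an $\fip^*$-set iff it lies in \emph{every} idempotent, and a $\fcen^*$-set iff it lies in every \emph{minimal} idempotent. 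Hence condition (2) translates into ``$px=x$ for every idempotent $p\in\beta G$'' and condition (3) into ``$px=x$ for every minimal idempotent $p$''.

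With this dictionary the easy implications are immediate. For (1)$\Rightarrow$(2), given any idempotent $u$ the pair $(x,ux)$ is proximal, because $u$ sends both $x$ and $ux$ to the single point $ux=u(ux)$, while $ux\in\overline{Gx}$; distality of $x$ then forces $ux=x$, and as $u$ was arbitrary this is exactly (2). The implication (2)$\Rightarrow$(3) is nothing but the inclusion $\fip^*\subset\fcen^*$ (every minimal idempotent is an idempotent). The substance of the cycle is the converse (3)$\Rightarrow$(1): assuming $ux=x$ for every minimal idempotent, I would take $y\in\overline{Gx}$ proximal to $x$, use the standard refinement that proximality is witnessed by a minimal idempotent $v$ with $vx=vy$, deduce $vy=vx=x$, and then argue $y=x$. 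I expect this last step to be the main obstacle: controlling the handedness of the minimal left ideals of the right-topological semigroup $\beta G$ and upgrading $vx=vy=x$ to $y=x$ is delicate, and the cleanest route is to invoke the known equivalence ``$x$ is distal iff $ux=x$ for every minimal idempotent of $\beta G$'' (for $G$-systems, \cite[Corollaries 5.30 and 5.36]{EEN01}), whose backward direction is proved by choosing a minimal idempotent adapted to an element of $\beta G$ carrying $x$ to $y$.

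It remains to fold in product recurrence, and here I would use the IP-realization results of Theorem~\ref{thm:Fur-recurrent-IP} in their $G$-system form (\cite{EEN01}). For (1)$\Rightarrow$(4), let $y$ be a recurrent point of a system $(Y,G)$ and let $U,V$ be neighborhoods of $x,y$; then $N(y,V)$ is an IP-set, while (2) makes $N(x,U)$ an $\fip^*$-set, so the two meet, and since deleting finitely many points from an IP-set leaves an IP-set this intersection is infinite. As $N\bigl((x,y),U\times V\bigr)=N(x,U)\cap N(y,V)$, the pair $(x,y)$ is recurrent, giving (4). For (4)$\Rightarrow$(1) I argue contrapositively: if $x$ is not distal then, by the equivalence of (1) and (2) already established, $x$ fails to be $\fip^*$-recurrent, so some neighborhood $U$ of $x$ yields an $N(x,U)$ disjoint from an IP-set $F$; realizing $F$ (up to the identity) as $N(y,V)$ for a recurrent point $y$ in some system via the $G$-analogue of Theorem~\ref{thm:Fur-recurrent-IP}(2) makes $N\bigl((x,y),U\times V\bigr)$ finite, so $(x,y)$ is not recurrent and $x$ is not product recurrent. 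This closes the equivalences, the entire weight of the argument resting on the single converse highlighted above.
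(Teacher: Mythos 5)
Your proposal is correct in every step you actually carry out, but the comparison with the paper is unusual: the paper offers \emph{no proof} of this theorem at all --- it is quoted as a known result, attributed to Furstenberg \cite{F81} for classical systems and to \cite[Corollaries 5.30 and 5.36]{EEN01} for $G$-systems. So your write-up is strictly more detailed than the paper's treatment. Your route --- translating all four conditions into the algebra of $\beta G$ via the dictionary ``$px=x$ iff $N(x,U)\in p$ for every neighborhood $U$ of $x$,'' identifying $\fip^*$-recurrence with ``$px=x$ for every idempotent'' and $\fcen^*$-recurrence with ``$px=x$ for every minimal idempotent'' (the latter through the Bergelson--Hindman characterization of central sets, Theorem~\ref{thm:central-set-idempotent}), then proving (1)$\Rightarrow$(2)$\Rightarrow$(3) and (1)$\Leftrightarrow$(4) directly --- is exactly in the spirit of the machinery the paper only deploys later, in Sections~4 and~5. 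All of these steps are sound; the one caveat is that with the paper's convention that IP-sets are infinite, ``idempotent'' must be read as ``idempotent in $\beta G\setminus G$'' (equivalently, cite \cite[Theorem 5.12]{HS12} as the paper does), which is harmless since the principal idempotent $e$ fixes every point. The implication you leave to a citation, (3)$\Rightarrow$(1), is indeed the entire analytic core of the theorem, and since you invoke the very same reference the paper relies on, no gap arises \emph{relative to the paper}.

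That said, the obstacle you flag in (3)$\Rightarrow$(1) can be closed by a short classical argument, which would make your proof fully self-contained (something the paper does not attempt). Suppose $vx=x$ for every minimal idempotent $v$, and let $y\in\overline{Gx}$ be proximal to $x$. Since the map $p\mapsto px$ is the continuous extension of $g\mapsto gx$ and $\beta G$ is compact, $\overline{Gx}=\beta G x$, so $y=qx$ for some $q\in\beta G$. Proximality gives $p\in\beta G$ with $px=py$; choosing a minimal left ideal $L\subset \beta G p$, every $r\in L$ has the form $sp$, hence $rx=ry$ for all $r\in L$. Pick an idempotent $u\in L$; then $uy=ux=x$ by hypothesis, so $y=qx=q(uy)=(qu)y$, and $qu\in\beta G u\subset L$ because $L$ is a left ideal. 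Thus $\{r\in L\colon ry=y\}$ is a nonempty closed subsemigroup of $\beta G$, and by Theorem~\ref{thm:Ellis-Namakura} it contains an idempotent $v$, which is minimal since $v\in L$. Finally $y=vy=vx=x$, the middle equality because $v\in L$ and the last by hypothesis. This is precisely the ``handedness'' manipulation you anticipated being delicate; note it needs only left ideals and Ellis--Namakura, no structure theory of $K(\beta G)$.
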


The notion of weak product recurrence was first introduced in \cite{HO08} by Haddad and Ott for topological dynamical systems.
Let $(X,G)$ be a $G$-system and $x\in X$.
If for any $G$-system $(Y,G)$ and any almost periodic point $y\in Y$, $(x,y)$ is recurrent in the product system $(X\times Y,G)$, 
then we say that $x$ is \emph{weak product recurrent}.

In \cite{AF94} Auslander and Furstenberg  asked whether weak product recurrent point is product recurrent.
It is answered by Haddad and Ott  in \cite{HO08} negatively for topological  dynamical systems. 
In \cite{DSY12}, Dong, Shao and Ye related product recurrence with disjointness, which was introduced by Furstenberg in his seminal paper \cite{F67}, and proved that if a  non-trivial transitive system is disjoint from any minimal system, then every transitive point is weak product recurrent but not minimal. 
Here we generalize this result to $G$-systems. 

\begin{defn}
Let $(X,G)$ and $(Y,G)$ be two $G$-systems. 
We say that a nonempty closed subset
$J\subset X\times Y$ is a \emph{joining} of $(X,G)$ and $(Y,G)$ if it is $G$-invariant and its projections onto the first and second coordinates are $X$ and $Y$ respectively.

If every joining is equal to $X\times Y$,
then we say that $(X,G)$ and $(Y,G)$ are \emph{disjoint}.
\end{defn}

In \cite{GTWZ21},  Glasner et al.\@ showed that for any infinite discrete group $G$, the Bernoulli shift is disjoint from any minimal system.
Recently, Xu and Ye \cite{XY22} gave a necessary and sufficient
condition for a transitive system $(X,G)$ to be disjoint from any minimal system when $G$ is a countable discrete group. 
In the following we show that any  transitive point in such a non-trivial transitive system is weak product recurrent but not product recurrent, which shows that Question~\ref{question:AF94-2} is also negative for $G$-systems.

In \cite[Theorem 4.3]{DSY12} the authors proved the following result for a topological dynamical system $(X,T)$, we generalize the result to $G$-systems. 

\begin{thm}\label{thm:weak-prod-rec-not-prod-rec}
Let $(X,G)$ be a non-trivial transitive system. 
If $(X,G)$ is disjoint from any minimal system,
then every transitive point $x\in X$ is weak product recurrent but not product recurrent.
\end{thm}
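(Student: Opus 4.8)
The statement has two independent halves, and I would treat them separately, using Theorem~\ref{thm:distal} for the negative half and the disjointness hypothesis for the positive half.

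\textit{That $x$ is not product recurrent.} Suppose to the contrary that $x$ is product recurrent. By Theorem~\ref{thm:distal} the point $x$ is then distal, and I would invoke the standard fact that a distal point is almost periodic: taking a minimal idempotent $u$ in the enveloping semigroup (equivalently in $K(\beta G)$), the point $ux$ is almost periodic and proximal to $x$, so distality forces $ux=x$ and hence $x$ is almost periodic. Thus $\overline{Gx}$ is minimal, and since $x$ is transitive we get $\overline{Gx}=X$, so $(X,G)$ is a non-trivial minimal system. But then the diagonal $\{(z,z):z\in X\}$ is a proper closed $G$-invariant subset of $X\times X$ projecting onto both factors, i.e.\ a joining of $(X,G)$ with itself that is not all of $X\times X$; this contradicts the hypothesis that $(X,G)$ is disjoint from the minimal system $(X,G)$.

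\textit{That $x$ is weak product recurrent.} Let $(Y,G)$ be a $G$-system and $y\in Y$ an almost periodic point, so that $(\overline{Gy},G)$ is minimal. I would study the orbit closure $\overline{G(x,y)}$ inside $X\times\overline{Gy}$: it is closed and $G$-invariant, and because $x$ is transitive its projections are $\overline{Gx}=X$ and $\overline{Gy}$, so it is a joining of $(X,G)$ and $(\overline{Gy},G)$. By disjointness this joining is all of $X\times\overline{Gy}$, hence $(x,y)$ is a \emph{transitive} point of $X\times\overline{Gy}$; consequently, for every neighbourhood $V$ of $y$ the set $D_V:=\{gx:g\in G,\ gy\in V\}$ is dense in $X$. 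It then remains to upgrade transitivity of $(x,y)$ to recurrence, that is, to show $N(x,U)\cap N(y,V)$ is infinite for all neighbourhoods $U$ of $x$ and $V$ of $y$. I would argue by contradiction: if this set were finite, then the dense set $D_V$ would meet the open set $U$ in only finitely many points, which is impossible provided $x$ is not an isolated point of $X$ (a dense set meets every neighbourhood of a non-isolated point infinitely often).

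\textit{The main obstacle: ruling out isolated points.} The positive half thus rests entirely on the lemma that a non-trivial transitive system disjoint from every minimal system has no isolated points; this is exactly where disjointness is essential, since the two-point compactification of $\mathbb Z$ is transitive but has a non-recurrent transitive point. I would prove it by contradiction. If $X$ had an isolated point then, as homeomorphisms permute isolated points and any isolated point of $X=\overline{Gx}$ must lie in $Gx$, the transitive point $x$ itself would be isolated; hence $Gx\cong G/H$ (with $H:=\mathrm{Stab}_G(x)$ a proper subgroup of infinite index) would be open, discrete and dense, and $X$ would be infinite. Choosing a minimal $G$-system $(M,G)$ and a point $m\in M$ with $\overline{Hm}\neq M$, the set $J:=\overline{\{(gx,gm):g\in G\}}$ is a joining of $(X,G)$ and $(M,G)$; but since $x$ is isolated, any $(x,w)\in J$ is a limit of $(g_ix,g_im)$ with $g_ix=x$ eventually, i.e.\ $g_i\in H$ eventually, so the fibre of $J$ over $x$ lies in $\overline{Hm}\subsetneq M$. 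Hence $J\neq X\times M$, contradicting disjointness. The delicate point — and where I expect the real work to lie — is producing such a pair $(M,m)$ for an \emph{arbitrary} proper subgroup $H$: this is immediate when $H$ is finite (any infinite minimal $G$-system will do, since the finite $H$ has finite, hence non-dense, orbits) and whenever $H$ is contained in a proper normal subgroup $N$ with $G/H$ nontrivial quotient (take a non-trivial minimal $G/N$-flow, on which $H$ acts trivially), in particular whenever $H$ has finite index; the residual cases (e.g.\ $H$ an infinite proper subgroup of an infinite simple group) require more care. Once no isolated points are present, the recurrence of $(x,y)$ follows as above, completing the proof that $x$ is weak product recurrent.
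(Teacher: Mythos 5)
Your two halves are organized exactly as in the paper's proof: the negative half (product recurrent $\Rightarrow$ distal by Theorem~\ref{thm:distal} $\Rightarrow$ almost periodic $\Rightarrow$ $X$ minimal $\Rightarrow$ the diagonal is a proper self-joining of the non-trivial system $X$) is essentially the paper's argument verbatim, and your positive half begins with the same key step, namely that $\overline{G(x,y)}$ is a joining of $X$ with $\overline{Gy}$, hence equals $X\times\overline{Gy}$ by disjointness. Where you diverge is in what comes next, and your instinct is correct: density of the orbit only gives that $N(x,U)\cap N(y,V)$ is nonempty, and upgrading this to ``infinite'' requires the open set $U\times(V\cap\overline{Gy})$ to be infinite. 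The paper simply asserts that $G(x,y)\cap(U\times(V\cap\overline{Gy}))$ is an infinite set, with no justification; so the subtlety you isolate (and your two-point compactification example shows it is a real one) is passed over in silence there.

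However, your repair --- proving that $X$ has no isolated points by finding, for $H=\mathrm{Stab}_G(x)$, a minimal system on which $H$ does not act minimally --- is genuinely incomplete, as you yourself admit: the residual case ($H$ infinite, of infinite index, with normal closure equal to $G$, e.g.\ inside an infinite simple group) is exactly the hard part. That lemma amounts to saying that no infinite-index subgroup is a \emph{dense orbit set} in the sense of \cite{GTWZ21, KRS22}; it is true, but it is not reachable by your finite-$H$ and normal-subgroup tricks. The good news is that the theorem does not need it, because a different case split avoids isolated points of $X$ almost entirely. If $\overline{Gy}$ is infinite, then, being an infinite minimal system, it has no isolated points, so $V\cap\overline{Gy}$ is infinite, hence $U\times(V\cap\overline{Gy})$ is an infinite open set and density alone yields infinitely many return times --- no information about $X$ is needed. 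If $\overline{Gy}$ is finite, then $K=\mathrm{Stab}_G(y)$ has finite index in $G$ and $K\subseteq N(y,V)$, so it suffices that $N(x,U)\cap K$ be infinite: when $x$ is not isolated this follows because $\{y\}$ is open in $\overline{Gy}$, so $Kx\times\{y\}=G(x,y)\cap(X\times\{y\})$ is dense in $X\times\{y\}$ and $U$ is infinite; when $x$ is isolated we have $N(x,U)\cap K\supseteq \mathrm{Stab}_G(x)\cap K$, which is infinite as soon as $\mathrm{Stab}_G(x)$ is infinite (since $K$ has finite index); and the only remaining configuration --- $x$ isolated with \emph{finite} stabilizer --- is precisely your finite-$H$ case, which you already dispose of via a joining with an infinite minimal system (such a system exists and may be taken metrizable: any minimal left ideal of $\beta G$ is a free $G$-flow by Ellis' theorem, hence infinite, and it admits an infinite metrizable minimal factor). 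Rerouted this way, your argument closes completely, and it is in fact more careful than the paper's own proof, which never addresses the case that makes the infiniteness assertion delicate.
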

\begin{proof}
Let $x$ be a transitive point in $(X,G)$. 
First we show that $x$ is weak product recurrent. 
Given any almost periodic point $y$ in a $G$-system $(Y,G)$, 
we need to show that $(x,y)$
is recurrent.
Since $x$ is transitive,
$\overline{G(x,y)}$
is a joining of $X$ and $\overline{Gy}$. Since $(X,G)$ is disjoint from any minimal system, in particular $(X,G)$ and $(\overline{Gy},G)$ are disjoint, thus 
$\overline{G(x,y)}=X\times \overline{Gy}$.
Then for any neighborhood $U\times V$ of $(x,y)$ in $X\times Y$,
$G(x,y)\cap (U\times (V\cap\overline{Gy}))$ is an infinite set, i.e. $(x,y)$
is recurrent.

Now we show that $x$ is not product recurrent. Since $\calf^*_{cen}\subset \calf_s$, 
by Theorem~\ref{thm:distal}, it is sufficient to show that $x$ is not  almost periodic.
Assume on the contrary that $x$ is an almost periodic point.
Then $(X,G)$ is a minimal system.
By the assumption, $(X,G)$ is disjoint from itself. 
It is clear that $\{(z,z):z\in X\}$ is a joining of $(X,G)$ and $(X,G)$. Since 
$(X,G)$ is non-trivial, $\{(z,z):z\in X\}\neq X\times X$. This is a contradiction.
\end{proof}

In \cite{OZ13}, Oprocha and Zhang showed that the intersection of a dynamical syndetic set and a thick set contains a recurrent time set of a
piecewise syndetic recurrent point for topological dynamical systems.
In fact, a subset of $\bbn_0$ is the intersection of a dynamical syndetic set and a thick set if and only if it is central, see e.g.\@ \cite[Theorem 3.7]{HSY20}.
Using Theorem \ref{thm:main2}, we generalize Oprocha and Zhang's result to $G$-systems.

\begin{lem}\label{lem:central-lem}
Let $G$ be a countable infinite discrete group with identity $e$ and $F\subset G$.
If $F$ is a central set with $e\in F$, then 
there exists a $G$-system $(X,G)$, an $\fps$-recurrent point $x\in X$ and a neighborhood $U$ of $x$ such that $N(x,U)\subset F$.
\end{lem}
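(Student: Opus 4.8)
The statement to prove is Lemma~\ref{lem:central-lem}: a central set $F$ with $e\in F$ contains the return time set $N(x,U)$ of some $\fps$-recurrent point. The plan is to reduce this to the combinatorial criterion of Theorem~\ref{thm:main2}, applied to the family $\calf=\fps$, which we already know satisfies (P1) and (P2) by Lemma~\ref{lem:fps-P1-P2}. Since we need $F\in\fps$ to even invoke that theorem, the first step is to record that every central set is piecewise syndetic: indeed a central set arises from the return time set to a neighborhood of an almost periodic point $y$ with $(x,y)$ proximal, and return time sets of almost periodic points are syndetic, hence (by the definition of central together with $\ft\subset\fcen\subset\fip$ and the structure noted before Theorem~\ref{thm:distal}) central sets are piecewise syndetic. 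So $F\in\fps$.

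The heart of the argument is to produce the decreasing sequence $\{F_n\}$ of subsets of $F$ in $\fps$ satisfying the self-similarity condition: for every $n$ and every $f\in F_n$ there exists $m$ with $fF_m\subset F_n$. The plan is to read this directly off the dynamics defining centrality. By definition of central, there is a $G$-system $(X,G)$, a point $x\in X$, an almost periodic point $y\in X$, a neighborhood $U$ of $y$ with $(x,y)$ proximal and $N(x,U)\subset F$. First I would shrink to a nested neighborhood basis $\{U_n\}$ of $y$ with $U_1\subset U$ and $\overline{U_{n+1}}\subset U_n$, and set $F_n:=N(x,U_n)$. Each $F_n\subset F$ and $F_{n+1}\subset F_n$. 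The key dynamical input is that $y$ is almost periodic while $(x,y)$ is proximal: this is exactly the configuration guaranteeing that each $N(x,U_n)$ is central, hence piecewise syndetic, so $F_n\in\fps$. For the self-similarity condition, fix $f\in F_n$, so $fx\in U_n$; by continuity of the action and almost periodicity of $y$, I would use that $y\in\overline{Gy}$ is a minimal point to find a neighborhood relationship: since $fx$ is close to $y$ and $y$ is almost periodic, there is a smaller neighborhood $U_m$ so that whenever $gx\in U_m$ one has $fgx\in U_n$, i.e. $fF_m\subset F_n$.

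The main obstacle I anticipate is precisely this last self-similarity step, because $f$ moves $x$ into $U_n$ but we need $f$ to map the \emph{whole} return set $F_m$ into $U_n$, and the continuity of $x\mapsto fx$ only controls points near $x$, not near $y$. The clean way around this is to exploit that $y$ is a minimal (almost periodic) point and $(x,y)$ is proximal: the proximal relation lets us transfer recurrence of $y$ to $x$. Concretely, one replaces the naive neighborhood chase by the following: because $y$ is almost periodic and $(x,y)$ is proximal, the point $x$ lies in the same proximal cell as a minimal point, and the return sets $N(x,U_n)$ inherit the algebraic "central" self-replication from the minimal system $\overline{Gy}$. I would make this rigorous by working in the enveloping structure — pulling $f$ back through the relation $fy\in$ (a neighborhood of $y$ forced by almost periodicity) and using continuity of the map $g\mapsto fg$ composed with the action to choose $m$ uniformly. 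Once the two conditions $F_n\in\fps$ and the self-similarity are in hand, Theorem~\ref{thm:main2} applied with $\calf=\fps$ immediately yields a $G$-system, an $\fps$-recurrent point, and a neighborhood whose return set lies in $F$, completing the proof.
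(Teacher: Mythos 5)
Your overall strategy --- reduce to condition (2) of Theorem~\ref{thm:main2} with $\calf=\fps$, using Lemma~\ref{lem:fps-P1-P2} --- is exactly the paper's, and you correctly identified the crux: with $F_n:=N(x,U_n)$ the self-similarity step breaks down, because $f\in F_n$ only tells you $fx\in U_n$, so $f^{-1}U_n$ is an open neighborhood of $x$, not of $y$, and no ball $U_m$ around $y$ can be placed inside it. However, your proposed repair does not close this gap. You appeal to ``$fy\in$ a neighborhood of $y$ forced by almost periodicity,'' but almost periodicity of $y$ forces nothing about $fy$ for an arbitrary $f\in N(x,U_n)$: the return times of $x$ to $U_n$ and of $y$ to $U_n$ are a priori unrelated sets, and proximality of $(x,y)$ only produces a thick set of times at which $gx$ and $gy$ are close, which need not contain your given $f$. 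The invocations of ``proximal cell,'' ``enveloping structure,'' and ``inherited algebraic self-replication'' are not arguments; nothing in the proposal actually produces the containment $fF_m\subset F_n$.

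The missing idea is small but essential: define $F_n$ as a return time set of the \emph{pair} in the product system, namely $F_n:=N\bigl((x,y),B(y,\tfrac{\epsilon}{n})\times B(y,\tfrac{\epsilon}{n})\bigr)$, where $B(y,\epsilon)\subset U$. Then $f\in F_n$ records both $fx\in B(y,\tfrac{\epsilon}{n})$ (so $F_n\subset F$) and $fy\in B(y,\tfrac{\epsilon}{n})$; the latter makes $f^{-1}B(y,\tfrac{\epsilon}{n})$ an open neighborhood of $y$, so some $B(y,\tfrac{\epsilon}{m})$ lies inside it, and then $fF_m\subset F_n$ follows by applying $f$ to both coordinates. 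Piecewise syndeticity of each $F_n$ is then proved directly (rather than quoting that central sets are piecewise syndetic, which your sketch justifies incorrectly via $\ft\subset\fcen\subset\fip$): the set $A=N(y,B(y,\tfrac{\epsilon}{2n}))$ is syndetic since $y$ is almost periodic, the set $B=\{g\in G\colon d(gx,gy)<\tfrac{\epsilon}{2n}\}$ is thick since $(x,y)$ is proximal, and the triangle inequality gives $A\cap B\subset F_n$, so $F_n\in\fps$. With these $F_n$ the hypotheses of Theorem~\ref{thm:main2}(2) hold and the lemma follows.
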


\begin{proof}
It is sufficient to show that $F$ satisfies Theorem \ref{thm:main2} (2) for the case of $\calf=\fps$. That is, there exists a decreasing sequence $\{F_n\}$ of subsets of $F$ in $\fps$ such that for any $n\in\bbn$ and $f\in F_n$ there exists 
$m\in\bbn$ such that $f F_m \subset F_n$.

Since $F$ is a central set, by the definition,
there exists a $G$-system $(X,G)$, a point $x\in X$, an almost periodic point $y\in X$ and a neighborhood $U$ of $y$ such that $(x,y)$ is proximal and 
$N(x,U)\subset F$.	
Since $U$ is a neighborhood of $y$, 
there exists $\epsilon>0$ such that $B(y,\epsilon)\subset U$.

For $n\in\bbn$, define $F_n:=N((x,y),B(y,\frac{\epsilon}{n})\times B(y,\frac{\epsilon}{n}))$. It is clear that $F_n\subset F$ and $F_{n+1}\subset F_n$ for $n\in\bbn$.  
Fix $n\in\bbn$ and we will show that $F_n\in\fps$.
Let $A:=N(y,B(y,\frac{\epsilon}{2n}))$ and $B:=\{g\in G: d(gx,gy) <\frac{\epsilon}{2n}\}$.
Since $y$ is an almost periodic point, $A$ is a syndetic set.
Since $(x,y)$ is proximal, 
$B$ is a thick set.
For any $g\in A\cap B$,
$d(gx,y)\leq d(gx,gy)+d(gy,y)<\frac{\epsilon}{n}$, then $gx\in B(y,\frac{\epsilon}{n})$. Thus 
$A\cap B\subset N((x,y),B(y,\frac{\epsilon}{n})\times B(y,\frac{\epsilon}{n}))=F_n$ and $F_n\in\fps$. 

Now fix $F_n$ and $f\in F_n$.
Note that $f(x,y)\in B(y,\frac{\epsilon}{n})\times B(y,\frac{\epsilon}{n})$ and
$y\in f^{-1}B(y,\frac{\epsilon}{n})$.
It is clear that $f^{-1}B(y,\frac{\epsilon}{n})$ is a neighborhood of $y$, thus
there exists
$m\in\bbn$ such that 
$B(y,\frac{\epsilon}{m})\subset f^{-1}B(y,\frac{\epsilon}{n})$.
Then we have \[
    fN\Big((x,y),B(y,\frac{\epsilon}{m})\times B(y,\frac{\epsilon}{m})\Big)\subset N\Big((x,y),B(y,\frac{\epsilon}{n})\times B(y,\frac{\epsilon}{n})\Big), 
\]
i.e.
$fF_m\subset F_n$.
\end{proof}

In \cite{DSY12}, Dong, Shao and Ye further studied product recurrent properties via Furstenberg families.
Let $\calf$ be a Furstenberg family and $(X,G)$ be a $G$-system.
We say that a point $x\in X$ is \emph{$\calf$-product recurrent} if for any given $\calf$-recurrent point $y$ in any $G$-system $(Y,G)$, $(x,y)$ is recurrent in the product system $(X\times Y, G)$. Dong, Shao and Ye \cite{DSY12} asked a question that if $x$ is $\calf_{ps}$-product recurrent, is $x$ necessarily a distal point? In \cite{OZ13} Oprocha and Zhang gave a positive answer on this question for topological dynamical systems. In the following result we will answer this question for $G$-systems.

\begin{thm}\label{thm:distal2}
Let $(X,G)$ be a $G$-system and $x\in X$. 
Then the following assertions are equivalent:
\begin{enumerate}
	\item $x$ is distal;
	\item  $x$ is $\fps$-product recurrent;
  \item for every $\fps$-recurrent point $y$ in the Bernoulli shift $(\Sigma_2,G)$, $(x,y)$ is recurrent in the product system $(X\times \Sigma_2,G)$.
\end{enumerate}
\end{thm}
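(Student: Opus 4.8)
The plan is to establish the cycle of implications $(1)\Rightarrow(2)\Rightarrow(3)\Rightarrow(1)$, in which only the last step carries real content. For $(1)\Rightarrow(2)$, if $x$ is distal then by Theorem~\ref{thm:distal} it is product recurrent; since every piecewise syndetic set is infinite (so that $\fps\subset\finf$), any $\fps$-recurrent point is in particular recurrent, and hence $(x,y)$ is recurrent for every such $y$, which is exactly what $\fps$-product recurrence asserts. The implication $(2)\Rightarrow(3)$ is immediate, since $(3)$ is merely the restriction of $(2)$ to the single $G$-system $(\Sigma_2,G)$.

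For $(3)\Rightarrow(1)$ I would argue by contraposition: assuming $x$ is \emph{not} distal, I will produce an $\fps$-recurrent point $y$ in the Bernoulli shift for which $(x,y)$ fails to be recurrent, contradicting $(3)$. By Theorem~\ref{thm:distal}, $x$ not distal means $x$ is not $\fcen^*$-recurrent, so there is a neighborhood $U_0$ of $x$ with $N(x,U_0)\notin\fcen^*$; unpacking the definition of the dual family, this yields a central set $F_0$ with $N(x,U_0)\cap F_0=\emptyset$. Since $\fcen$ is upward closed (any superset of a central set is central via the same witnessing data), the set $F:=F_0\cup\{e\}$ is again central and now satisfies $e\in F$.

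The remaining step is to realize $F$ dynamically inside the Bernoulli shift and then compute. Applying Lemma~\ref{lem:central-lem} to $F$ produces a $G$-system together with an $\fps$-recurrent point whose return time set to a suitable neighborhood is contained in $F$; feeding this into Proposition~\ref{prop:rec-sigma} with $\calf=\fps$ transports it to an $\fps$-recurrent point $y\in\Sigma_2$ with $y\in[1]$ and $N(y,[1])\subset F=F_0\cup\{e\}$. Now consider the neighborhood $U_0\times[1]$ of $(x,y)$. Since
\[
N\bigl((x,y),U_0\times[1]\bigr)=N(x,U_0)\cap N(y,[1])\subset N(x,U_0)\cap\bigl(F_0\cup\{e\}\bigr),
\]
and $N(x,U_0)\cap F_0=\emptyset$ while $e\in N(x,U_0)$, this return time set is contained in $\{e\}$, hence finite. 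Therefore $(x,y)$ is not recurrent, contradicting $(3)$, and we conclude that $x$ must be distal.

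The main obstacle is entirely concentrated in $(3)\Rightarrow(1)$, and its crux is the availability of Lemma~\ref{lem:central-lem}: it provides the passage from the purely combinatorial notion of a central set to an $\fps$-recurrent point realizing it, after which Proposition~\ref{prop:rec-sigma} places this point in the Bernoulli shift so that hypothesis $(3)$ can be invoked. The only technical points to verify along the way are that $\fcen$ is upward closed (so that $F_0\cup\{e\}$ remains central with $e\in F$) and that piecewise syndetic sets are infinite (so that $\fps\subset\finf$ and the easy direction $(1)\Rightarrow(2)$ goes through).
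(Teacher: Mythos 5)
Your proof is correct and follows essentially the same route as the paper's: both establish the cycle $(1)\Rightarrow(2)\Rightarrow(3)\Rightarrow(1)$ and handle the key implication $(3)\Rightarrow(1)$ by reducing distality to $\fcen^*$-recurrence via Theorem~\ref{thm:distal}, then realizing a central set as (a superset of) the return-time set of an $\fps$-recurrent point in the Bernoulli shift through Lemma~\ref{lem:central-lem} and Proposition~\ref{prop:rec-sigma}. The only differences are cosmetic: you argue by contraposition where the paper argues directly, and you spell out the upward-closedness of $\fcen$ and the inclusion $\fps\subset\finf$, which the paper uses implicitly.
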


\begin{proof}
    (1)$\Rightarrow$(2).
It follows from Theorem \ref{thm:distal}.

	(2)$\Rightarrow$(3). It is clear.
	
	(3)$\Rightarrow$(1).
By Theorem \ref{thm:distal}	it is sufficient to show that $x$ is an $\fcen^*$-recurrent point. 
For any neighborhood $U$ of $x$ and any central subset $A$ of $G$, 
by Lemma~\ref{lem:central-lem} there exists a $G$-system $(Y,G)$, 
an $\fps$-recurrent point $y\in Y$ and 
a neighborhood $V$ of $y$ such that $N(y,V)\subset A\cup\{e\}$. 
Then by Proposition \ref{prop:rec-sigma}, 
there exists an $\fps$-recurrent point $z\in \Sigma_2$ with $z\in [1]$
such that $N(z,[1])\subset A\cup \{e\}$.
By (3), $(x,z)$ is recurrent. Thus 
\[N(x,U)\cap N(z,[1])=N((x,z),U\times [1]) \]
is an infinite set.
Then we have $N(x,U)\cap A\neq\emptyset$, which implies that $N(x,U)\in \fcen^*$.
\end{proof}

\section{Return time sets  for \texorpdfstring{$G$}{G}-systems on compact Hausdorff spaces}\label{section5}

In this section, by virtue of  the algebraic properties of the Stone-\v{C}ech compactification $\beta G$ of $G$, we investigate  return time sets for general $G$-systems on compact Hausdorff spaces. 

First, we briefly introduce the concept of a compact right topological semigroup and its basic properties. 
By a \emph{compact right topological semigroup}, we mean a triple $(E,\cdot,\mathcal{T})$, where $(E,\cdot)$ is a semigroup, and $(E,\mathcal{T})$ is a compact Hausdorff space, and for every $p\in E$, the right translation $\rho_p\colon S\to S$, $q\mapsto q\cdot p$ is continuous. 
If there is no ambiguous, we will say that  $E$, instead of the triple $(E,\cdot,\mathcal{T})$, is a compact right topological semigroup.
A nonempty subset $L$ of $E$ is called a \emph{left ideal} of $E$ if  $E \cdot L\subset L$; is called a \emph{right ideal} of $E$ if  $L \cdot E\subset L$. 
A \emph{minimal left ideal} is the left ideal that does not contain any proper left ideal. 
A subset $I$ of $E$ is called an \emph{ideal} of $E$ if $I$ is both a left ideal and a right ideal of $E$. 
It is well known that $E$ has a smallest ideal, denoted by $K(E)$, which is the union of all minimal left ideals of $E$, see e.g.\@ \cite[Theorem 2.8]{HS12}. 
An element $p\in E$ is called \emph{idempotent} if $p\cdot p=p$.
An idempotent $p\in E$ is called a \emph{minimal idempotent} if there exists a minimal left ideal $L$ of $E$ such that $p\in L$.  
The following celebrated Ellis-Namakura Theorem reveals every compact right topological semigroup must contains an idempotent, see e.g.~\cite[Theorem 2.5]{HS12}.

\begin{thm}\label{thm:Ellis-Namakura}
Let $E$ be a compact right topological semigroup.
Then there exists $p\in E$ such that $p\cdot p=p$.
\end{thm}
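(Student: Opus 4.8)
The plan is to use a Zorn's Lemma argument to produce a minimal nonempty closed subsemigroup of $E$ and then extract an idempotent from it. First I would consider the collection $\mathcal{S}$ of all nonempty closed subsemigroups of $E$, ordered by reverse inclusion; it is nonempty since $E\in\mathcal{S}$. To apply Zorn's Lemma I must check that every chain has an upper bound, i.e.\@ that the intersection of a chain of nonempty closed subsemigroups is again a nonempty closed subsemigroup. Closedness and the subsemigroup property pass to arbitrary intersections at once; nonemptiness is where compactness enters: a chain has the finite intersection property (any finite subfamily has a smallest member, which is nonempty), so by compactness of $E$ the total intersection is nonempty. Hence $\mathcal{S}$ has a minimal element $A$ with respect to inclusion.

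Next I would fix any point $p\in A$ and analyze the right translate $Ap=\{q\cdot p:q\in A\}$. I claim $Ap$ is a nonempty closed subsemigroup contained in $A$. It is contained in $A$ because $A$ is a subsemigroup and $p\in A$; it is a subsemigroup because $(q_1p)(q_2p)=(q_1pq_2)p\in Ap$, using that $q_1pq_2\in A$; and it is closed because it is the image of the compact set $A$ under the continuous right translation $q\mapsto q\cdot p$, hence compact and so closed in the Hausdorff space $E$. By minimality of $A$ we conclude $Ap=A$.

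Finally, since $p\in A=Ap$, the set $B=\{q\in A:q\cdot p=p\}$ is nonempty. It is closed, being the intersection of $A$ with the preimage of the closed singleton $\{p\}$ under the continuous right translation $q\mapsto q\cdot p$, and it is a subsemigroup, since $q_1,q_2\in B$ gives $(q_1q_2)p=q_1(q_2p)=q_1p=p$. Thus $B$ is a nonempty closed subsemigroup of $A$, and minimality forces $B=A$. In particular $p\in B$, which says exactly $p\cdot p=p$, so $p$ is the desired idempotent.

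The point requiring the most care, rather than a genuine obstacle, is that continuity is available only for the right translations $q\mapsto q\cdot p$ and never for left translations. Every closedness step above---the closedness of $Ap$ and of $B$---must therefore be arranged so that only right multiplication by the fixed element $p$ appears; replacing these by $pA$ or by $\{q:p\cdot q=p\}$ would break the argument, since those sets need not be closed.
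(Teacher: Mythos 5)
Your proof is correct, and it is precisely the standard argument for the Ellis--Namakura theorem: the paper itself gives no proof, citing \cite[Theorem 2.5]{HS12}, and that reference proves it exactly this way (Zorn's Lemma on nonempty closed subsemigroups via compactness, then $Ap=A$ and $\{q\in A: q\cdot p=p\}=A$ by minimality). Your closing remark correctly identifies the one subtle point, namely that every continuity appeal must be routed through right translations only.
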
 

Now we recall the definition and algebraic structure of Stone-\v{C}ech compactification of a countable infinite discrete group. 
For further details on this topic, we refer the reader to the book \cite{HS12}. 
Let $G$ be a countable infinite discrete group and $\beta G$ be the collection of ultrafilters on $G$. By  Theorem 3.6 in \cite{HS12}, we know that each ultrafilter has the Ramsey property. 
Given $A\subset G$, let $\widehat{A}:=\{p\in\beta G: A\in p\}$.
If $g\in G$, then $\mathfrak{e}(g):=\{A\in \mathcal{P}(G): g\in A\}$ is easily seen to be an ultrafilter on $G$, which is called the \emph{principal ultrafilter} defined by $g$. 
Once we have identified $g\in G$ with $\mathfrak{e}(g)\in\beta G$, we shall suppose that $G\subset \beta G$.
In fact, the set $\{\widehat{A}: A\subset G \}$ forms a basis of a topology $\mathcal{T}$ on $\beta G$ (see\cite[Section 3.2]{HS12}). 
Then $(\beta G,\mathcal{T})$ is the Stone-\v{C}ech compactification of $G$ (see\cite[Section 3.3]{HS12}),
that is, for any compact Hausdorff space $Y$ and any function $\varphi\colon G\to Y$ there exists a continuous function $\widetilde\varphi\colon \beta G \to Y$ such that $\widetilde\varphi|_G=\varphi$. 
The operation $\cdot$ on $G$ can be uniquely extended to an operation $\cdot$ on $\beta G$ such that for any $p,q\in \beta G$, $p\cdot q=\{A\subset G:\{x\in G: x^{-1}A\in q\}\in p\}$.
Then $(\beta G,\cdot,\mathcal{T})$ is a compact Hausdorff right topological semigroup.

Recall that we introduced the definition of central set in Section 4. In \cite{BH90} Bergelson and Hindman obtained
the following characterization  of  central sets via the algebra properties of $\beta G$.

\begin{thm}\label{thm:central-set-idempotent}
Let $G$ be a countable infinite discrete group. 
A subset $F$ of $G$ is central if and only if there exists a minimal idempotent $p\in\beta G$ such that $F\in p$.
\end{thm}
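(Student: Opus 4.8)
The plan is to pass to the action of $\beta G$ on a $G$-system by $p$-limits and to translate the dynamical notions in the definition of a central set into algebraic statements about idempotents. Recall that for a $G$-system $(X,G)$ on a compact Hausdorff space every $p\in\beta G$ acts on $X$ by $px=p\text{-}\lim_{g\to p}gx$, that the map $p\mapsto px$ is continuous, and that for an open set $U$ one has $px\in U\Rightarrow N(x,U)\in p$. I would use three standard consequences of the algebra of $\beta G$: (a) if $p$ is a minimal idempotent (so $p\in L$ for some minimal left ideal $L$ with $p\cdot p=p$) and $py=y$, then $\overline{Gy}=Ly$ is minimal, so $y$ is almost periodic; (b) $(x,y)$ is proximal if and only if there is $p\in\beta G$ with $px=py$, in which case $I=\{p\in\beta G\colon px=py\}$ is a left ideal; (c) for any left ideal $L$, the set $Ly$ is a closed $G$-invariant subset of $\overline{Gy}$.

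For the direction ``minimal idempotent $\Rightarrow$ central'', I would realize $F$ inside the Bernoulli shift. Let $p$ be a minimal idempotent with $F\in p$, put $x=\mathbf{1}_F\in\Sigma_2$ and $y=px$. Since $py=ppx=px=y$ and $p$ is a minimal idempotent, $y$ is almost periodic by (a); since $px=py\,(=y)$, the pair $(x,y)$ is proximal by (b). Because $F=\{g\in G\colon \mathbf{1}_F(g)=1\}\in p$, the $p$-limit defining the coordinate $y(e)$ equals $1$, so $y\in[1]$. Taking $U=[1]$ then gives a neighborhood of $y$ with $N(x,U)=N(\mathbf{1}_F,[1])=F\subset F$, so the data $(\Sigma_2,G)$, $x$, $y$, $U$ witness that $F$ is central.

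For the converse ``central $\Rightarrow$ minimal idempotent'', suppose $(X,G)$, $x$, an almost periodic $y$, and a neighborhood $U$ of $y$ realize $F$, with $(x,y)$ proximal and $N(x,U)\subset F$; shrinking $U$ I may assume it is open. Proximality gives the left ideal $I=\{p\colon px=py\}$, which contains a minimal left ideal $L$, so $px=py$ for every $p\in L$. Since $y$ is almost periodic, $\overline{Gy}$ is minimal, and by (c) the closed invariant set $Ly$ equals $\overline{Gy}$, so $y\in Ly$ and the closed subsemigroup $S_y=\{p\colon py=y\}$ meets $L$. The intersection $L\cap S_y$ is a nonempty closed subsemigroup (it is closed under multiplication because $L$ is a left ideal and $S_y$ is a semigroup), so by the Ellis--Namakura Theorem~\ref{thm:Ellis-Namakura} it contains an idempotent $p$. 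This $p$ is a minimal idempotent with $py=y$, and since $p\in L\subset I$ we get $px=py=y\in U$; as $U$ is open this yields $N(x,U)\in p$, and upward closure of the ultrafilter $p$ together with $N(x,U)\subset F$ gives $F\in p$.

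The main obstacle is the bookkeeping in the converse: proximality only supplies \emph{some} minimal left ideal $L$ on which $x$ and $y$ are identified, while almost periodicity supplies a possibly \emph{different} minimal idempotent fixing $y$, and there is no reason these coincide. The device that reconciles them is to apply Ellis--Namakura to the closed subsemigroup $L\cap S_y$, which is nonempty precisely because $Ly=\overline{Gy}\ni y$; this produces a single minimal idempotent that simultaneously fixes $y$ and collapses $x$ onto $y$. The remaining ingredients---continuity of $p\mapsto px$, the identity $\overline{Gy}=\beta G\,y$, and the reduction to an open neighborhood---are routine, and I would treat them briefly.
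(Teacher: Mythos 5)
Your proposal is correct, but note that the paper itself offers no proof of this theorem: it is quoted verbatim from Bergelson--Hindman \cite{BH90}, so there is no internal argument to compare against. What you have written is essentially the standard proof from the literature (cf.\ \cite{BH90} and the treatment in \cite{HS12}): for ``minimal idempotent $\Rightarrow$ central'' one takes $x=\mathbf{1}_F$ in the Bernoulli shift $\Sigma_2$, sets $y=px$, and uses $ppx=px$ to get almost periodicity of $y$, proximality of $(x,y)$, and $N(\mathbf{1}_F,[1])=F$; for the converse one passes to the left ideal $I=\{p\colon px=py\}$, picks a minimal left ideal $L\subset I$, uses minimality of $\overline{Gy}$ to see $Ly=\overline{Gy}\ni y$, and applies the Ellis--Namakura Theorem~\ref{thm:Ellis-Namakura} to the closed subsemigroup $L\cap\{p\colon py=y\}$ to produce a single minimal idempotent that both fixes $y$ and collapses $x$ onto $y$. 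All the supporting facts you invoke are sound in this setting: $\Sigma_2=\{0,1\}^G$ is compact metrizable (so the paper's metric definitions of proximality and almost periodicity apply), $y(e)=1$ follows from continuity of the coordinate evaluation together with $F\in p$, minimal left ideals of $\beta G$ are closed so $Ly$ is compact and $G$-invariant, and $px\in U$ with $U$ open forces $N(x,U)\in p$, whence $F\in p$ by upward closure of ultrafilters. In short: the proof is complete and correct, and it reconstructs precisely the argument the paper delegates to its reference.
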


The extension of the operation $\cdot$ on $G$ can be expressed by $p$-limits. 
We refer to \cite[Section 3.5]{HS12} for more about $p$-limits.

\begin{defn}
Let $p\in\beta G$, $\{x_g\}_{g\in G}$ be an indexed family in a compact Hausdorff space $X$ and $y\in X$. 
If for every neighborhood $U$ of $y$, $\{g\in G\colon x_g\in U\}\in p$,
then we say that the \emph{$p$-limit} of $\{x_g\}_{g\in G}$ is $y$, denoted by $p\text{-}\lim_{g\in G}x_g=y$. 
As $X$ is a compact Hausdorff space, $p\text{-}\lim_{g\in G}x_g$ exists and is unique. 

If viewing $\{g\}_{g\in G}$ as an indexed family in $\beta G$, then $p\text{-}\lim_{g\in G}g=p$.
\end{defn}

For a Furstenberg family $\calf\subset \calpg$, the \emph{hull} of $\calf$ is defined as  
\[h(\calf)=\{p\in\beta G: p\subset\calf\}.\]
If $\calf$ has the Ramsey property, then $h(\calf)$ is a nonempty closed subset of $\beta G$. 
For further details on this notion, we refer to  \cite{G80},  which in fact establishes a one-to-one correspondence between the set of Furstenberg families with the Ramsey property and the set of nonempty closed subsets of $\beta G$. 

A Furstenberg family $\calf\subset \calpg$ is called \emph{left shift-invariant} if for any $A\in \calf$ and $g\in G$, $gA\in\calf$.
We have the following equivalent condition for $h(\calf)$ to be a nonempty closed left ideal, see  \cite[Lemma 3.4]{L12} for the case $\bbn$ and \cite[Theorem 5.1.2]{C14} for a general discrete group.

\begin{lem}\label{lem:h-f}
Let $G$ be a countable infinite discrete group and $\calf\subset \calpg$ be a Furstenberg family with the Ramsey property.
Then $h(\calf)$ is a nonempty closed left ideal of $\beta G$ if and only if 
$\calf$ is left shift-invariant.
\end{lem}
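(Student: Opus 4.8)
The plan is to prove the equivalence by exploiting the right topological structure of $\beta G$ together with the standard dictionary between a Furstenberg family with the Ramsey property and its hull; since the excerpt already grants that $h(\calf)$ is a nonempty closed subset of $\beta G$, only the left ideal property is at issue. First I would record the translation formula: identifying $g\in G$ with its principal ultrafilter $\mathfrak{e}(g)$ and unwinding the definition $p\cdot q=\{A:\{x:x^{-1}A\in q\}\in p\}$, one gets $g\cdot p=\{A\subset G: g^{-1}A\in p\}=\{gB: B\in p\}$ for every $p\in\beta G$. Then I would establish the crucial correspondence: for $\calf$ with the Ramsey property, a set $A\subset G$ lies in $\calf$ if and only if $A\in p$ for some $p\in h(\calf)$. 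The easy implication is immediate; for the other, writing $h(\calf)=\bigcap_{S\notin\calf}\widehat{G\setminus S}$, the set $\widehat A\cap h(\calf)$ is nonempty exactly when the family $\{A\}\cup\{G\setminus S: S\notin\calf\}$ has the finite intersection property. This is precisely where the Ramsey property enters: if $A\subset S_1\cup\dots\cup S_k$, then $A=\bigcup_{i}(A\cap S_i)$ forces some $A\cap S_i\in\calf$, whence $S_i\in\calf$ by upward closedness of $\calf$, a contradiction; so the finite intersection property holds and compactness of $\beta G$ yields the desired $p$.

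For the forward implication, assume $\calf$ is left shift-invariant. Fixing $p\in h(\calf)$ and $g\in G$, every member of $g\cdot p$ is of the form $gB$ with $B\in p\subset\calf$, so $gB\in\calf$ by shift-invariance; hence $g\cdot p\subset\calf$, that is, $g\cdot p\in h(\calf)$. I would then upgrade from $G$ to all of $\beta G$ using that the right translation $\rho_p\colon q\mapsto q\cdot p$ is continuous: it maps the dense subset $G$ into the closed set $h(\calf)$, so
\[
\rho_p(\beta G)=\rho_p(\overline{G})\subset\overline{\rho_p(G)}\subset\overline{h(\calf)}=h(\calf).
\]
Since $p\in h(\calf)$ was arbitrary, $\beta G\cdot h(\calf)\subset h(\calf)$, so $h(\calf)$ is a left ideal.

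For the converse, assume $h(\calf)$ is a left ideal and take $A\in\calf$ and $g\in G$. By the correspondence there is $p\in h(\calf)$ with $A\in p$; as $h(\calf)$ is a left ideal, $g\cdot p\in h(\calf)$, and since $gA\in g\cdot p\subset\calf$ we conclude $gA\in\calf$. By the arbitrariness of $A$ and $g$, the family $\calf$ is left shift-invariant, completing the proof.

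The step I expect to be the main obstacle is the correspondence lemma, namely making the finite-intersection-property computation rigorous and pinning down exactly where the Ramsey property (equivalently, $\calf^*$ being a filter) is used; this part may also simply be invoked from \cite{G80,L12,C14}. Once it is in place, both implications reduce to the translation formula $g\cdot p=\{gB:B\in p\}$ together with the continuity-and-density passage from $G$ to $\beta G$, so the remaining work is routine.
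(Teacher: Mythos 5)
Your proof is correct. Note that the paper itself gives no argument for this lemma---it is quoted from the literature, with \cite{L12} (Lemma 3.4, for $\bbn$) and \cite{C14} (Theorem 5.1.2, for general discrete groups) cited---so there is no internal proof to compare against; what you have written is exactly the standard argument those references use, supplied in full. All three ingredients check out: the translation formula $g\cdot p=\{gB\colon B\in p\}$ follows directly from the definition of the product of ultrafilters; the correspondence that, under the Ramsey property, $A\in\calf$ if and only if $A\in p$ for some $p\in h(\calf)$ is handled correctly by your finite-intersection-property computation (if $A\subset S_1\cup\dotsb\cup S_k$ with every $S_i\notin\calf$, then iterating the Ramsey property over the decomposition $A=\bigcup_i(A\cap S_i)$ and using upward closedness of $\calf$ yields a contradiction, and compactness of $\beta G$ then produces the required ultrafilter); and the upgrade from $G\cdot h(\calf)\subset h(\calf)$ to $\beta G\cdot h(\calf)\subset h(\calf)$ via continuity of the right translation $\rho_p$, density of $G$ in $\beta G$, and closedness of $h(\calf)$ is precisely the right use of the right-topological structure. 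The step you flagged as the likely obstacle, the correspondence lemma, is indeed the crux where the Ramsey property enters, and your treatment of it is complete.
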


The following lemma is folklore, see e.g. \cite[Theorem 4.4]{L12} or \cite[Lemma 5.2.2]{C14}.

\begin{lem} \label{lem:f-rec-idempotent}
Let $G$ be a countable infinite discrete group and $\calf\subset \calpg$ be a Furstenberg family with the Ramsey property.
If $h(\calf)$ is a nonempty closed subsemigroup of $\beta G$, then for any $G$-system $(X,G)$ on a compact Hausdorff space $X$, a point $x\in X$ is $\calf$-recurrent if and only if there exists an idempotent $p\in h(\calf)$ such that $p\text{-}\lim_{g\in G}gx=x$. 
\end{lem}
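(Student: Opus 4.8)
The plan is to prove the two implications separately, with essentially all of the work in the forward direction. For sufficiency, suppose $p\in h(\calf)$ is an idempotent with $p\text{-}\lim_{g\in G}gx=x$. Then for every neighborhood $U$ of $x$, the definition of the $p$-limit gives $N(x,U)=\{g\in G:gx\in U\}\in p$, and since $p\in h(\calf)$ means $p\subset\calf$, we obtain $N(x,U)\in\calf$; hence $x$ is $\calf$-recurrent. Note that idempotency is not used in this direction.

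For necessity I would pass to the set
\[
S=\{p\in\beta G: p\text{-}\lim_{g\in G}gx=x\},
\]
and write $px:=p\text{-}\lim_{g\in G}gx$ for brevity. Since the condition $p\text{-}\lim_{g\in G}gx=x$ is equivalent to $N(x,U)\in p$ for every neighborhood $U$ of $x$, we have $S=\bigcap_U\widehat{N(x,U)}$, an intersection of clopen subsets of $\beta G$, so $S$ is closed. To see that $S$ is a subsemigroup I would invoke the standard identity for the operation on $\beta G$ (see \cite{HS12}): for any indexed family $(y_g)_{g\in G}$ in a compact Hausdorff space and any $p,q\in\beta G$,
\[
(p\cdot q)\text{-}\lim_{g\in G}y_g=p\text{-}\lim_{s\in G}\,q\text{-}\lim_{t\in G}y_{st}.
\]
Applying this with $y_g=gx$, and using $(st)x=s(tx)$ together with the continuity of each map $z\mapsto sz$ on $X$, yields the action identity $(pq)x=p(qx)$. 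Consequently, if $p,q\in S$ then $(pq)x=p(qx)=px=x$, so $pq\in S$; thus $S$ is a closed subsemigroup of $\beta G$.

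It remains to intersect with $h(\calf)$ and locate an idempotent. The key local point is that for every neighborhood $U$ of $x$ the set $\widehat{N(x,U)}\cap h(\calf)$ is nonempty. Indeed, $\calf$-recurrence of $x$ gives $N(x,U)\in\calf$; since $\calf$ has the Ramsey property, $\calf^*$ is a filter, and because $N(x,U)$ meets every member of $\calf^*$, the family $\calf^*\cup\{N(x,U)\}$ has the finite intersection property and extends to an ultrafilter $p$. Any ultrafilter $p\supseteq\calf^*$ satisfies $p\subset\calf$: for $C\in p$ we have $G\setminus C\notin p\supseteq\calf^*$, and since $C\in\calf$ if and only if $G\setminus C\notin\calf^*$ (a consequence of the upward closure of $\calf$), it follows that $C\in\calf$. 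Hence $p\in h(\calf)$ and $N(x,U)\in p$. As finite intersections reduce to a single neighborhood via $N(x,U_1)\cap\dots\cap N(x,U_n)=N(x,U_1\cap\dots\cap U_n)$, the closed sets $\{\widehat{N(x,U)}\cap h(\calf)\}_U$ have the finite intersection property, so by compactness of $\beta G$,
\[
S\cap h(\calf)=\Bigl(\bigcap_U\widehat{N(x,U)}\Bigr)\cap h(\calf)\neq\emptyset.
\]
Since $h(\calf)$ is a closed subsemigroup by hypothesis and $S$ is a closed subsemigroup, $S\cap h(\calf)$ is a nonempty closed subsemigroup of $\beta G$, hence a compact right topological semigroup; by the Ellis--Namakura Theorem (Theorem~\ref{thm:Ellis-Namakura}) it contains an idempotent $p$. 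This $p$ lies in $h(\calf)$, is idempotent, and satisfies $px=x$, i.e. $p\text{-}\lim_{g\in G}gx=x$, as required.

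The main obstacle I anticipate lies in the necessity direction, in two structural facts: verifying the action identity $(pq)x=p(qx)$, which forces careful bookkeeping with iterated $p$-limits (though it is standard), and, more importantly, establishing that $S\cap h(\calf)$ is nonempty. The latter is where the Ramsey hypothesis on $\calf$ is genuinely used, through the duality $C\in\calf\iff G\setminus C\notin\calf^*$ and the filter property of $\calf^*$; the compactness argument then packages these local statements into the existence of a single ultrafilter compatible with both $S$ and $h(\calf)$, after which Ellis--Namakura delivers the idempotent.
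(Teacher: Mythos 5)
Your proof is correct. Note that the paper does not actually prove this lemma: it is stated as folklore with pointers to \cite[Theorem 4.4]{L12} and \cite[Lemma 5.2.2]{C14}, and your argument is essentially the standard one from those sources — the sufficiency direction straight from the definition of the $p$-limit (idempotency unused, as you observe), and the necessity direction by showing that $\{p\in h(\calf)\colon p\text{-}\lim_{g\in G}gx=x\}$ is a nonempty closed subsemigroup of $\beta G$ and applying the Ellis--Namakura Theorem (Theorem~\ref{thm:Ellis-Namakura}). The two points that needed genuine care are both handled soundly: the action identity $(pq)x=p(qx)$, which uses continuity of each translation $z\mapsto sz$ for $s\in G$ (valid here because the action is by a $G$-system, not merely a $\beta G$-action), and the nonemptiness of $\widehat{N(x,U)}\cap h(\calf)$, where your duality $C\in\calf\iff G\setminus C\notin\calf^*$ together with the filter property of $\calf^*$ is exactly the point at which the Ramsey hypothesis is used, before compactness and the finite intersection property assemble the local statements into $S\cap h(\calf)\neq\emptyset$.
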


We say a subset $F$ of $G$ is an \emph{essential $\calf$-set}
if there exists an idempotent $p\in h(\calf)$ such that $F\in p$.
We present the following combinatorial characterization of essential $\calf$-sets, 
which was proved in \cite[Proposition 4.13]{L12} for the case of $\bbn$; however, it is routine to verify the proof extends to a general countably  infinite discrete group $G$. 

\begin{prop}\label{prop:ess-rec}
Let $G$ be a countable infinite discrete group and $\calf\subset \calpg$ be a Furstenberg family with the Ramsey property.
If $h(\calf)$ is a nonempty closed subsemigroup of $\beta G$,
then a subset $F$ of $G$ is an essential $\calf$-set if and only if there exists a decreasing sequence $\{F_n\}$ of subsets of $F$ in $\calf$ such that for any $n\in\bbn$ and $f\in F_n$ there exists $m\in\bbn$ such that $f F_m \subset F_n$.
\end{prop}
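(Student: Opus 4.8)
The plan is to prove Proposition~\ref{prop:ess-rec} by establishing both implications, using Lemma~\ref{lem:f-rec-idempotent} to transfer the algebraic condition (existence of an idempotent $p\in h(\calf)$ with $F\in p$) to a dynamical one, and then extracting the combinatorial sequence $\{F_n\}$ from the dynamics. The cleanest route to the hard direction is to apply Theorem~\ref{thm:main2} rather than argue from scratch, since that theorem already gives the equivalence between the existence of such a decreasing sequence and the existence of an $\calf$-recurrent point $x$ with $N(x,U)\subset F$. The main work will then be to identify ``essential $\calf$-set'' with the existence of such a recurrent point. Note, however, that Theorem~\ref{thm:main2} is stated for compact \emph{metric} $G$-systems and requires $\calf$ to satisfy (P1) and (P2), whereas here the hypothesis is only that $h(\calf)$ is a nonempty closed subsemigroup with the Ramsey property; so I expect I cannot simply cite Theorem~\ref{thm:main2} verbatim, and will instead mirror its construction while drawing on the ultrafilter machinery of this section.

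For the easy direction, assume $F$ is an essential $\calf$-set, so there is an idempotent $p\in h(\calf)$ with $F\in p$. First I would build the decreasing sequence by iterating the idempotent relation $p\cdot p=p$. Recall that for an idempotent $p$ and any $A\in p$, the set $A^\star:=\{g\in A: g^{-1}A\in p\}$ again lies in $p$ and satisfies the key property that for every $g\in A^\star$, one has $g^{-1}A^\star\in p$ (this is the standard lemma for idempotent ultrafilters, e.g.\ \cite[Lemma 4.14]{HS12}). Setting $F_1:=F\in p$ and defining $F_{n+1}:=(F_n)^\star$, I obtain a decreasing sequence with each $F_n\in p\subset\calf$ and $F_n\subset F$; moreover for $f\in F_n$ we have $f^{-1}F_n\in p$, and taking any $F_m\subset f^{-1}F_n$ (for instance $F_m=F_{n+1}\cap f^{-1}F_n$, which lies in $p$ hence in $\calf$) yields $fF_m\subset F_n$. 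This gives the desired sequence directly from the algebra, with no appeal to dynamics.

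For the converse, suppose the decreasing sequence $\{F_n\}$ with the self-replicating property $fF_m\subset F_n$ exists. The goal is to produce an idempotent $p\in h(\calf)$ with $F\in p$. The natural approach is to consider the set $E$ of all $q\in h(\calf)$ such that $F_n\in q$ for every $n$; equivalently $E=h(\calf)\cap\bigcap_n\widehat{F_n}$. Since each $F_n\in\calf$ and $h(\calf)$ is closed, $E$ is a nonempty closed subset of $\beta G$ (nonemptiness because $\{F_n\}$ is a decreasing chain of $\calf$-sets, so the filter they generate extends to an ultrafilter in $h(\calf)$). The crux is to show $E$ is a subsemigroup: given $q,r\in E$ I must check $q\cdot r\in E$, i.e.\ $F_n\in q\cdot r$ for all $n$. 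Unwinding the definition of the product, $F_n\in q\cdot r$ means $\{g: g^{-1}F_n\in r\}\in q$, and here the hypothesis $fF_m\subset F_n$ (equivalently $F_m\subset f^{-1}F_n$) is exactly what forces $g^{-1}F_n\in r$ for every $g\in F_m$, so the defining set contains $F_m\in q$. Once $E$ is a nonempty closed subsemigroup of the compact right topological semigroup $\beta G$, the Ellis–Namakura Theorem~\ref{thm:Ellis-Namakura} furnishes an idempotent $p\in E\subset h(\calf)$, and $F_1=F\in p$ shows $F$ is an essential $\calf$-set.

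The main obstacle I anticipate is verifying the semigroup closure of $E$, since this is where the somewhat delicate definition of the ultrafilter product $q\cdot r=\{A: \{g: g^{-1}A\in r\}\in q\}$ must be matched precisely against the combinatorial condition $fF_m\subset F_n$; getting the quantifiers and the direction of the inclusion right (that $g\in F_m$ implies $g^{-1}F_n\supset F_{m'}\in r$ for a suitable $m'$) is the technical heart. I would also be careful that $E$ is genuinely nonempty: this requires checking that the family $\{F_n\}$ together with membership in $\calf$ generates a consistent filter whose extension can be taken inside the closed set $h(\calf)$, which uses that $h(\calf)=\{p:p\subset\calf\}$ and that each $F_n\in\calf$. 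Everything else is bookkeeping with $p$-limits and the standard idempotent lemma, so the proof reduces to these two verifications.
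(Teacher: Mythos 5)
Your sufficiency half (from the sequence $\{F_n\}$ to an idempotent) is correct and is the standard argument: $E=h(\calf)\cap\bigcap_{n}\widehat{F_n}$ is closed, it is a subsemigroup by exactly the computation you describe (for $f\in F_n$ the hypothesis gives some $F_m\subset f^{-1}F_n$, and since every $F_m\in r$ this yields $F_n\subset\{g\in G\colon g^{-1}F_n\in r\}\in q$, so $F_n\in q\cdot r$; that $q\cdot r\in h(\calf)$ is the hypothesis that $h(\calf)$ is a subsemigroup), and Ellis--Namakura produces the idempotent. The nonemptiness of $E$ does require the Zorn-type argument you allude to: the filter generated by the decreasing chain $\{F_n\}$ is contained in $\calf$ (each member contains some $F_n\in\calf$ and $\calf$ is upward closed), and any filter contained in a family with the Ramsey property extends to an ultrafilter still contained in that family; this is precisely where the Ramsey property enters. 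Note that the paper itself does not write out a proof of this proposition at all: it cites \cite[Proposition 4.13]{L12} and remarks that the argument transfers from $\bbn$ to a general countable group.

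The necessity half, however, has a genuine gap. With $F_1:=F$ and $F_{n+1}:=(F_n)^\star$, first, the claim that $f^{-1}F_n\in p$ for all $f\in F_n$ fails at $n=1$, since for $f\in F$ there is no reason that $f^{-1}F\in p$ (fixable by starting from $F_1:=F^\star$). Second, and more seriously, the statement requires $fF_m\subset F_n$ for some \emph{term $F_m$ of the sequence}, whereas your proposed witness $F_{n+1}\cap f^{-1}F_n$ is not a term of the sequence you defined; and no iterated star $F_m$ with $m>n$ is forced to lie inside $f^{-1}F_n$ --- the sets $F_m$ and $f^{-1}F_n$ are merely all members of $p$, and members of a non-principal ultrafilter need not be nested. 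If you instead try to enlarge the sequence to include the sets $F_{n+1}\cap f^{-1}F_n$, you must do this for every $f\in F_n$ (infinitely many $f$) and then verify the self-replication property for each newly added set as well, which is exactly the recursion your write-up skips. The standard repair uses countability of $G$: fix an enumeration $G=\{g_1,g_2,\dots\}$, set $F_1=F^\star$, and recursively define
\[
F_{n+1}=F_n\cap\bigcap\bigl\{f^{-1}F_j\colon\ j\le n,\ f\in F_j\cap\{g_1,\dots,g_n\}\bigr\},
\]
maintaining the invariant that $f^{-1}F_j\in p$ for every $f\in F_j$. The invariant guarantees each displayed intersection is a finite intersection of members of $p$, hence in $p\subset\calf$; it survives the recursion because $f'\in f^{-1}F_j$ means $ff'\in F_j$, so $f'^{-1}f^{-1}F_j=(ff')^{-1}F_j\in p$ (this is where \cite[Lemma 4.14]{HS12} is really used, at the base step). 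Then for any $j$ and $f\in F_j$, writing $f=g_i$, one gets $fF_m\subset F_j$ for every $m>\max(i,j)$. With this diagonalization your overall strategy --- pure ultrafilter algebra, no dynamics --- does give a complete and self-contained proof.
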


Now we have the following main result of this section, which characterizes the recurrent time sets of $\calf$-recurrent points in a $G$-system on a compact Hausdorff space.

\begin{thm}\label{thm:main3}
Let $G$ be a countable infinite discrete group with identity $e$ and $\calf\subset \calpg$ be a Furstenberg family with the Ramsey property.
If $\calf$ satisfies (P1) and (P2) and $h(\calf)$ is a nonempty closed subsemigroup of $\beta G$, then
\begin{enumerate}
    \item for any $G$-system $(X,G)$ on a compact Hausdorff space $X$, 
    if a point $x\in X$ is $\calf$-recurrent, then for
every neighborhood $U$ of $x$, $N(x,U)$ is an essential $\calf$-set;
    \item for any essential $\calf$-subset  $F$ of $G$, 
    there exists a $G$-system 
    $(X,G)$, an $\calf$-recurrent point
    $x\in X$ and a neighborhood $U$ of $x$
    such that $N(x,U)\subset F\cup \{e\}$.
\end{enumerate} 
\end{thm}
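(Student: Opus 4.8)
The plan is to assemble the two parts directly from the machinery already developed, separating the compact-Hausdorff statement (needed for part (1)) from the compact-metric statement of Theorem~\ref{thm:main2} (which I would invoke for part (2)). The whole proof should be a matter of matching definitions, since Lemma~\ref{lem:f-rec-idempotent}, Proposition~\ref{prop:ess-rec} and Theorem~\ref{thm:main2} were tailored to make this result fall out.

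For part (1) I would argue as follows. Since $\calf$ has the Ramsey property and $h(\calf)$ is a nonempty closed subsemigroup of $\beta G$, Lemma~\ref{lem:f-rec-idempotent} applies to the compact Hausdorff system $(X,G)$: the $\calf$-recurrence of $x$ yields an idempotent $p\in h(\calf)$ with $p\text{-}\lim_{g\in G}gx=x$. Now fix a neighborhood $U$ of $x$. By the very definition of the $p$-limit, $\{g\in G\colon gx\in U\}\in p$; but this set is exactly $N(x,U)$. Hence $N(x,U)\in p$ for the idempotent $p\in h(\calf)$, which is precisely the definition of an essential $\calf$-set. This settles (1) with no computation: the content is carried entirely by Lemma~\ref{lem:f-rec-idempotent}, which is the correct tool here because it is valid for compact Hausdorff (not merely metric) systems, whereas Theorem~\ref{thm:main2} is stated only in the metric category.

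For part (2) I would reduce to Theorem~\ref{thm:main2}. Given an essential $\calf$-set $F$, pick an idempotent $p\in h(\calf)$ with $F\in p$. Since $p$ is an ultrafilter, it is upward closed, so $F\cup\{e\}\in p$ as well; thus $F\cup\{e\}$ is again an essential $\calf$-set, it contains $e$, and it lies in $\calf$ because $p\subset\calf$. Applying Proposition~\ref{prop:ess-rec} to $F\cup\{e\}$ produces a decreasing sequence $\{F_n\}$ of subsets of $F\cup\{e\}$ in $\calf$ such that for every $n\in\bbn$ and every $f\in F_n$ there is $m\in\bbn$ with $fF_m\subset F_n$. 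This is exactly condition (2) of Theorem~\ref{thm:main2} for the set $F\cup\{e\}$, so the implication (2)$\Rightarrow$(1) of that theorem furnishes a $G$-system $(X,G)$ on a compact metric space, an $\calf$-recurrent point $x$ and a neighborhood $U$ with $N(x,U)\subset F\cup\{e\}$. Since every compact metric space is compact Hausdorff, this is the required conclusion, and indeed it yields the slightly stronger statement that the witnessing system may be taken metric.

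I do not expect a genuine obstacle; the only points requiring care are the passage from $F$ to $F\cup\{e\}$ (using upward closure of the ultrafilter $p$ both to preserve the essential $\calf$-set property and to guarantee that $e$ belongs to the set, so that the hypotheses of Theorem~\ref{thm:main2} are met) and the bookkeeping of which earlier result lives in the Hausdorff category versus the metric category. The substantive work has all been front-loaded into the constructive proof of Theorem~\ref{thm:main2} and into the algebraic characterizations in Lemma~\ref{lem:f-rec-idempotent} and Proposition~\ref{prop:ess-rec}.
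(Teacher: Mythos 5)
Your proposal is correct and follows essentially the same route as the paper: part (1) via Lemma~\ref{lem:f-rec-idempotent} and the definition of the $p$-limit, part (2) via Proposition~\ref{prop:ess-rec} combined with the implication (2)$\Rightarrow$(1) of Theorem~\ref{thm:main2}. Your explicit passage from $F$ to $F\cup\{e\}$ (using upward closure of the ultrafilter to keep the essential $\calf$-set property and secure the hypothesis $e\in F$ of Theorem~\ref{thm:main2}) is a detail the paper leaves implicit, so your write-up is if anything slightly more careful.
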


\begin{proof}
(1) Let $(X,G)$ be a $G$-system and $x\in X$ be an $\calf$-recurrent point. 
As $h(\calf)$ is a nonempty closed subsemigroup of $\beta G$, by Lemma~\ref{lem:f-rec-idempotent} there exists an idempotent $p\in h(\calf)$ such that $p\text{-}\lim_{g\in G}gx=x$. 
For every neighborhood $U$ of $x$, $N(x,U)=\{g\in G\colon gx\in U\}\in p$.
So $N(x,U)$ is an essential $\calf$-set.

(2) Let $F\subset G$ be an essential $\calf$-set.
As $h(\calf)$ is a nonempty closed subsemigroup of $\beta G$,
by Proposition~\ref{prop:ess-rec} there exists a decreasing sequence $\{F_n\}$ of subsets of $F$ in $\calf$ such that for any $n\in\bbn$ and $f\in F_n$ there exists $m\in\bbn$ such that $f F_m \subset F_n$.
As $\calf$ satisfies (P1) and (P2), by Theorem~\ref{thm:main2} there exists a $G$-system $(X,G)$, an $\calf$-recurrent point $x\in X$ and a neighborhood $U$ of $x$ such that $N(x,U)\subset F\cup \{e\}$.
\end{proof}

The following examples show that some Furstenberg families introduced in Section 3 satisfy the conditions of Theorem~\ref{thm:main3}.

\begin{exam}
Recall that $\finf$ is the collection of all infinite subsets of $G$.
It is easy to verify that $\finf$ satisfies the properties (P1) and (P2) and has the Ramsey property.
Note that $h(\finf)=\beta G\setminus G$. Then $h(\finf)$ is a closed ideal of $\beta G$. 
Therefore, all the conditions of Theorem~\ref{thm:main3} are satisfied for 
$\finf$. 
By \cite[Theorem 5.12]{HS12} a subset $F$ of $G$ is an essential $\finf$-set if and only if it is an IP-set.
It should be noticed that the IP-set defined in this paper must be an infinite subset of $G$.  
So Theorem~\ref{thm:main3} for the Furstenberg family $\finf$ characterizes the recurrent time sets of recurrent points via IP-sets.
\end{exam}

\begin{exam}
Recall that $\fps$ is the collection of all piecewise syndetic subsets of $G$.
Then $\fps$ has the Ramsey property and
by Lemma~\ref{lem:fps-P1-P2} $\fps$ satisfies  (P1) and (P2).
We know that $h(\fps)=\cl_{\beta G} K(\beta G)$, see e.g.\@ \cite[Corollary 4.41]{HS12},
and $\cl_{\beta G} K(\beta G)$ is a closed ideal of $\beta G$, see e.g.\@ \cite[Theoerem 4.44]{HS12}.
Therefore, all the conditions of Theorem~\ref{thm:main3} are satisfied for 
$\fps$.  
Following \cite{HMS96}, we say that a subset $A$ of $G$ is \emph{quasi-central} if there exists an idempotent $p\in \cl_{\beta G} {K(\beta G)}$ such that $A\in p$.
So Theorem~\ref{thm:main3} for the Furstenberg family $\fps$ characterizes the recurrent time sets of $\fps$-recurrent points via quasi-central sets,
which is similar to Theorem~\ref{thm:main1} in the introduction. 
\end{exam} 

\begin{exam}
Let $G$ be a countable infinite discrete amenable group and $\{F_n\}$ be a  F{\o}lner sequence in $G$.
Recall that $\fpud^{\{F_n\}}$ and $\fpubd$ are the collection of all subset of $G$ with positive upper density with respect to $\{F_n\}$ and
the collection of all subsets of $G$ with positive upper Banach density. 
By Lemma~\ref{lem:fpud-fpubd-P1-P2} $\fpud^{\{F_n\}}$ and $\fpubd$ satisfy (P1) and (P2). 
By Lemma~\ref{lem:h-f}, $h(\fpud^{\{F_n\}})$ and $h(\fpubd)$ are closed left ideals of $\beta G$.
Therefore, all the conditions of Theorem~\ref{thm:main3} are satisfied for 
$\fpud^{\{F_n\}}$ and $\fpubd$.

Following \cite{BD08}, we say that a subset $A$ of $G$ is a \emph{D-set} if there exists an idempotent $p\in h(\fpubd)$ such that $A\in p$.
So Theorem~\ref{thm:main3} for the Furstenberg family $\fpubd$ characterizes the recurrent time sets of $\fpubd$-recurrent points via D-sets.
\end{exam}

\section{\texorpdfstring{$\beta G$}{beta G}-actions and product recurrence}

In \cite{AF94} Auslander and Furstenberg initiated the study of the action of a compact right topological semigroup on a compact Hausdorff space.
In this section, we will focus on the $\beta G$-action and give a sufficient condition for the closed semigroups $S$ of $\beta G$ for which an $S$-product recurrent point is a distal point.

\begin{defn}\label{defn:beta-G-action}
Let $G$ be a countable infinite discrete group and $\beta G$ be the Stone-\v{C}ech compactification of $G$. 
By an \emph{action} of $\beta G$ on a compact  Hausdorff space $X$,
we mean a map $\Phi\colon \beta G \times X\to X$, $(p,x)\mapsto px$, such that 
$p(qx)=(pq)x$, for all $p,q\in \beta G$ and $x\in X$, and 
such that for each $x\in X$ the map $\Phi_x: \beta G\to X$, $p\mapsto px$, is continuous.  For convenience, we denote such an action of 
$\beta G$ on $X$ as $(X,\beta G)$.
It should be noticed that it is not assume that for each $p\in \beta G$,
the map $X\to X$, $x\mapsto px$, is continuous.

For two actions $(X,\beta G)$ and $(Y,\beta G)$, define a map
	$\Psi\colon \beta G \times (X\times Y)\to X\times Y$, $(p,(x,y))\mapsto (px,py)$, then it is an action on $X\times Y$, 
	we denote such an action of 
	$\beta G$ on $X\times Y$ as $(X\times Y,\beta G)$.
\end{defn}

\begin{rem}\label{rem:p-limit-x}
Let $(X,\beta G)$ be a $\beta G$-action. By the definition of $\beta G$-action, for each $x\in X$,  $\Phi_x: p\mapsto px$ is a continuous map from $\beta G$ to $X$. 
For every neighborhood $V$ of $px$, there exists some $A\in p$  such that $\Phi_x(\widehat{A})\subset V$.
Since $p\text{-}\lim_{g\in G}g=p$, $\{g\in G:g\in \widehat{A}\}\in p$.
Note that
$\{g\in G:g\in \widehat{A}\}\subset \{g\in G: gx\in V\}$, 
so we have 
$\{g\in G: gx\in V\}\in p$. 
By the uniqueness of $p$-limit, $p\text{-}\lim_{g\in G}gx=px$.
\end{rem}

\begin{rem}\label{rem:induced}
When $(X,G)$ is a $G$-system with $X$ being a compact Hausdorff space, 
there is a naturally induced action of $\beta G$ on $X$. 
For every $g\in G$, we view $g$ as a continuous map from $X$ to $X$.
Define $\theta: G\to X^X$ by  $\theta(g)=g$. 
As $\beta G$ is the Stone-\v{C}ech compactification of $G$,  
$\theta$ has a continuous extension  $\widetilde{\theta}\colon \beta G\to X^X$.
By the map $\widetilde{\theta}$, $\beta G$ actions on $X$. 
\end{rem}

Now we recall some basic dynamical concepts in the context of $\beta G$-actions. 

\begin{defn}
Let $(X,\beta G)$ be a $\beta G$-action. 
We say that  a pair $(x,y)$ of points in $X$ is \emph{proximal} if there exists some $p\in \beta G$ such that $px=py$.
If $(x,y)$ is not proximal, then $(x,y)$ is said to be \emph{distal}.
A point $x\in X$ is called \emph{distal} if for any $y\in \beta G x$ with $y\neq x$, $(x,y)$ is distal. 
\end{defn}

\begin{defn}
Let $(X,\beta G)$ be a $\beta G$-action.
We say that a point $x\in X$ is \emph{recurrent} if there exists some $p\in \beta G\setminus G$ such that $px=x$, and \emph{almost periodic} if there exists some minimal idempotent $p$ in $\beta G$ such that $px=x$.

\begin{rem}
It should be noticed that the notation $(X,\beta G)$ denotes the action of $\beta G$ on $X$ as defined in Definition \ref{defn:beta-G-action}. In general $(X,\beta G)$ is not a dynamical system since it is not assume that the map $\Phi\colon \beta G \times X\to X$ is continuous in Definition \ref{defn:beta-G-action}.
Here we define the notions "proximal", "distal", "recurrent" and "almost periodic" for $(X,\beta G)$. It is not hard to see that if the $\beta G$-action is induced by a $G$-system (see Remark \ref{rem:induced}) then the notions of "proximal", "distal", "recurrent" and "almost periodic" introduced here agree with the corresponding notions for $G$-systems. 
\end{rem}

Let $S$ be a nonempty closed subsemigroup of $\beta G\setminus G$.
A point $x\in X$ is said to be \emph{$S$-recurrent} if there exists some $p\in S$ such that $px=x$.

It is easy to see that a point $x$ is recurrent of $(X,\beta G)$ if and only if there exists an idempotent $p\in \beta G\setminus G$ such that $px=x$,
and a point
is almost periodic of $(X,\beta G)$ if and only if it is 
$L$-recurrent for some minimal left ideal $L$ of $\beta G$.
If $x\in X$ and $u$ is a minimal idempotent in $\beta G$, 
then $(x,ux)$ is proximal of $(X,\beta G)$ as $u(ux)=ux$.
It follows that a distal point of $(X,\beta G)$ is almost periodic of $(X,\beta G)$.
\end{defn}

In \cite{AF94} Auslander and Furstenberg generalized the characterization of distal points to general compact right topological semigroup actions.

\begin{thm}[{\cite[Theorem 1]{AF94}}]\label{thm:distal-idem} 
Let $(X,\beta G)$ be a $\beta G$-action and $x\in X$.
Then the following assertions are equivalent:
\begin{enumerate}
    \item $x$ is a distal point;
    \item for any almost periodic point $y\in X$, 
    $(x,y)$ is almost periodic in $(X\times X,\beta G)$;
    \item for any $\beta G$-action $(Y,\beta G)$ and any almost periodic point $y\in Y$, 
    $(x,y)$ is an almost periodic point in $(X\times Y,\beta G)$;
    \item for any idempotent $p\in \beta G$, $px=x$;
    \item for any minimal idempotent $p\in \beta G$, $px=x$;
    \item there is a minimal left ideal $L$ in $\beta G$ such that 
    for any idempotent $p$ in $L$, $px=x$.
\end{enumerate}
\end{thm}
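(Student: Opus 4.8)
The plan is to prove the six conditions equivalent through the cycle
\[
(1)\Rightarrow(4)\Rightarrow(5)\Rightarrow(3)\Rightarrow(2)\Rightarrow(1),
\]
supplemented by the side equivalence $(5)\Leftrightarrow(6)$, working entirely with the \emph{algebraic} descriptions of proximality and almost periodicity, since the point maps $x\mapsto px$ need not be continuous. Several implications are formal. For $(1)\Rightarrow(4)$, any idempotent $p$ makes $(x,px)$ proximal, since $p\cdot x=p(px)$ exhibits a common image of $x$ and $px$ under $p$; as $px\in\beta G x$, distality of $x$ forces $px=x$. Then $(4)\Rightarrow(5)$ is trivial, and $(5)\Rightarrow(6)$ amounts to choosing any minimal left ideal (which exists as $K(\beta G)\neq\emptyset$), whose idempotents are minimal and hence fix $x$. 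The implication $(5)\Rightarrow(3)$ is equally direct: if $y$ is almost periodic in some $(Y,\beta G)$, then $uy=y$ for a minimal idempotent $u$; by $(5)$ also $ux=x$, so $u(x,y)=(x,y)$ and $(x,y)$ is almost periodic. Finally $(3)\Rightarrow(2)$ is the case $Y=X$.

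For $(6)\Rightarrow(5)$ I would invoke the Rees-type structure of the smallest ideal from \cite{HS12} (whose idempotents are furnished by Theorem~\ref{thm:Ellis-Namakura}). Given the minimal left ideal $L$ of $(6)$ and an arbitrary minimal idempotent $u$, let $R_u=u\beta G$ be its minimal right ideal. The intersection $R_u\cap L$ is a group; let $w$ be its identity, an idempotent of $L$. Since $w\in R_u$ we may write $w=ub$, whence $uw=u(ub)=ub=w$. By $(6)$ we have $wx=x$, and therefore $ux=u(wx)=(uw)x=wx=x$, using the action axiom $p(qz)=(pq)z$. As $u$ was arbitrary, $(5)$ follows.

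The conceptual heart is $(2)\Rightarrow(1)$, which I would organize in three steps. First, $x$ is almost periodic: for a minimal idempotent $u$ the point $ux$ is almost periodic, so $(2)$ makes $(x,ux)$ almost periodic, and the minimal idempotent fixing $(x,ux)$ fixes $x$. Next, every $y\in\beta G x$ is almost periodic: writing $y=rx=(rw_0)x$ where $w_0$ is a minimal idempotent fixing $x$, the element $rw_0$ lies in the minimal left ideal $\beta G w_0\subseteq K(\beta G)$, and the identity $e$ of the maximal subgroup of $\beta G w_0$ containing $rw_0$ satisfies $e(rw_0)=rw_0$, so $ey=y$. Finally, suppose $y\in\beta G x$ and $(x,y)$ is proximal; by the previous step $y$ is almost periodic, so $(2)$ yields a minimal idempotent $w$ with $w(x,y)=(x,y)$, lying in the minimal left ideal $L'=\beta G w$. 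Choosing $q$ with $qx=qy$ and setting $t_0=qw\in L'$, we get
\[
t_0(x,y)=\bigl(q(wx),q(wy)\bigr)=(qx,qy)\in\Delta,
\]
where $\Delta=\{(a,a):a\in X\}$ is the diagonal, which satisfies $\beta G\,\Delta\subseteq\Delta$. Since $t_0\in L'$, minimality of $L'$ gives $\beta G t_0=L'\ni w$, so $rt_0=w$ for some $r$, and hence
\[
(x,y)=w(x,y)=(rt_0)(x,y)=r\bigl(t_0(x,y)\bigr)\in\beta G\,\Delta\subseteq\Delta,
\]
forcing $x=y$. Thus $x$ is distal, closing the cycle.

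I expect the structural inputs from \cite{HS12} to be the main obstacle rather than the dynamical bookkeeping: the group structure of $R_u\cap L$ used in $(6)\Rightarrow(5)$, the left-identity idempotent attached to an element of a minimal left ideal used to show that orbit points are almost periodic, and the minimality identity $\beta G t_0=L'$ that replaces an explicit group inverse in the final rigidity argument. Once these are available, the six equivalences assemble along the cycle above.
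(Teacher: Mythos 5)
Your proposal is correct, but there is nothing in the paper to compare it against: the paper states this theorem as a citation of \cite[Theorem 1]{AF94} and gives no proof, so your argument is a blind reconstruction rather than a variant of an internal proof. That said, every step checks out against the paper's definitions (proximality as the existence of $p$ with $px=py$, almost periodicity as being fixed by a minimal idempotent): $(1)\Rightarrow(4)$ via $p(px)=(pp)x=px$; the formal implications $(4)\Rightarrow(5)\Rightarrow(6)$, $(5)\Rightarrow(3)\Rightarrow(2)$; the computation $uw=w$, $ux=u(wx)=(uw)x=x$ in $(6)\Rightarrow(5)$; and the three-step argument for $(2)\Rightarrow(1)$, whose final rigidity step (replacing a group inverse by the minimality identity $\beta G t_0=L'$ and the invariance $\beta G\,\Delta\subseteq\Delta$) is exactly right and works even though the maps $z\mapsto pz$ are not continuous, because it only ever composes elements of $\beta G$ on the left of the action.

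One point deserves care in how you source the structural inputs. In a compact \emph{right} topological semigroup, right ideals $p\beta G$ need not be closed, so the facts you invoke --- that $u\beta G$ is a minimal right ideal for a minimal idempotent $u$, that a minimal right ideal meets a minimal left ideal in a group, and that a minimal left ideal is a disjoint union of maximal groups --- are not direct applications of Theorem~\ref{thm:Ellis-Namakura}; they are the algebraic Rees-type structure theorems of \cite{HS12} (Chapter 1), whose standing hypothesis (a minimal left ideal containing an idempotent) is satisfied because minimal left ideals of $\beta G$ are closed, hence compact, hence contain idempotents. Your citations are therefore legitimate, but if you want a lighter argument you can avoid minimal right ideals entirely: for instance, in your Step 2, the set $T=\{q\in \beta G w_0\colon q(rw_0)=rw_0\}$ is nonempty (since $(\beta G w_0)(rw_0)$ is a left ideal inside the minimal left ideal $\beta G w_0$, hence equals it), closed (it is the preimage of a point under the continuous right translation by $rw_0$), and a subsemigroup, so Theorem~\ref{thm:Ellis-Namakura} directly supplies the idempotent $e$ with $e(rw_0)=rw_0$; a similar fixed-point-set trick handles $(6)\Rightarrow(5)$.
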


\begin{defn}
Let $(X,\beta G)$ be a $\beta G$-action
and $S$ be a nonempty closed subsemigroup of $\beta G\setminus G$.
A point $x\in X$ is said to be \emph{$S$-product recurrent} if  
for any $\beta G$-action $(Y,\beta G)$ and any $S$-recurrent point $y\in Y$,
$(x,y)$ is an $S$-recurrent point in $(X\times Y,\beta G)$,
and \emph{weakly $S$-product recurrent} if  
for any $\beta G$-action $(Y,\beta G)$ and any $S$-recurrent point $y\in Y$,
$(x,y)$ is a recurrent point in $(X\times Y,\beta G)$.

By Theorem \ref{thm:distal-idem}, if $L$ is a minimal left ideal in $\beta G$, then $L$-product recurrence coincides with distality.
\end{defn}

In \cite{AF94}, Auslander and Furstenberg studied the general compact right
topological semigroup $E$ actions on a compact Hausdorff space $X$.
They introduced the cancellation semigroup condition and showed that if a nonempty closed subsemigroup $S\subset E$ satisfies the cancellation semigroup condition and contains a minimal left ideal of $E$, then 
$S$-product recurrence coincides with distality, see \cite[Corollary 4 and Theorem 4]{AF94}. This inspires Auslander and Furstenberg to proposal the Question~\ref{question:AF94-1}.

We obtain the following sufficient conditions on the closed subsemigroup $S$ of $\beta G$ for which $S$-product recurrence coincides with distality, which partly answers Question~\ref{question:AF94-1} for $\beta G$-actions.
Note that Theorem~\ref{thm:main-result-S-product-recurrent} is a direct  consequence of the following result.

\begin{thm} \label{thm:action-pro-distal}
Let $(X,\beta G)$ be a $\beta G$-action and $x\in X$. 
If $S$ be a nonempty closed subsemigroup of $\beta G\setminus G$ with $K(\beta G)\subset S$, then the following assertions are equivalent:
\begin{enumerate}
    \item $x$ is distal;
    \item $x$ is $S$-product recurrent; 
    \item $x$ is weakly $S$-product recurrent. 
\end{enumerate} 
\end{thm}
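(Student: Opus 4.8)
The plan is to establish the cycle $(3)\Rightarrow(2)\Rightarrow(1)\Rightarrow(3)$, with the two outer implications being short and $(2)\Rightarrow(1)$ carrying all the weight. The implication $(3)\Rightarrow(2)$ is immediate from the definitions: if $(x,y)$ is $S$-recurrent then some $q\in S\subset\beta G\setminus G$ fixes $(x,y)$, and an element of $\beta G\setminus G$ fixing a point already witnesses recurrence. For $(1)\Rightarrow(3)$ I would fix a $\beta G$-action $(Y,\beta G)$ and an $S$-recurrent point $y$, so that $py=y$ for some $p\in S$, and consider $T=\{q\in S:qy=y\}$. Since $q\mapsto qy$ is continuous and $Y$ is Hausdorff, $T$ is closed, hence compact; since $S$ is a semigroup and $q_1q_2y=q_1y=y$ for $q_1,q_2\in T$, it is a subsemigroup. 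Thus $T$ is a compact right topological semigroup, and the Ellis--Namakura Theorem~\ref{thm:Ellis-Namakura} yields an idempotent $q\in T\subset\beta G$. As $x$ is distal, Theorem~\ref{thm:distal-idem} gives $qx=x$, so $q(x,y)=(x,y)$ with $q\in S$; that is, $(x,y)$ is $S$-recurrent.

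The core is $(2)\Rightarrow(1)$, which I would prove in contrapositive form: assuming $x$ is \emph{not} distal, I construct an $S$-recurrent point $y$ for which $(x,y)$ fails to be recurrent. By Theorem~\ref{thm:distal-idem} there is a minimal idempotent $u\in\beta G$ with $ux\neq x$. Using that $X$ is Hausdorff, choose disjoint open sets $U\ni x$ and $W\ni ux$. By Remark~\ref{rem:p-limit-x} we have $u\text{-}\lim_{g\in G}gx=ux\in W$, so $A:=N(x,W)\in u$; since $u$ is a minimal idempotent, Theorem~\ref{thm:central-set-idempotent} shows $A$ is central, and $e\notin A$ because $x\in U$ lies outside $W$. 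Moreover $U\cap W=\emptyset$ forces $N(x,U)\cap A=\emptyset$. Now $A\cup\{e\}$ is central and contains $e$, so Lemma~\ref{lem:central-lem} provides a (compact metric) $G$-system $(Y,G)$, an $\fps$-recurrent point $y\in Y$ and a neighborhood $V$ of $y$ with $N(y,V)\subset A\cup\{e\}$. Passing to the induced $\beta G$-action on $Y$ and invoking Lemma~\ref{lem:f-rec-idempotent}, the $\fps$-recurrence of $y$ yields an idempotent $p\in h(\fps)=\cl_{\beta G}K(\beta G)\subset S$ with $py=y$, so $y$ is $S$-recurrent.

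It then remains to check that $(x,y)$ is not recurrent in the product $\beta G$-action on $X\times Y$. Here I would compute $N\bigl((x,y),U\times V\bigr)=N(x,U)\cap N(y,V)\subset N(x,U)\cap(A\cup\{e\})=\{e\}$, using $N(x,U)\cap A=\emptyset$ together with $e\in N(x,U)\cap N(y,V)$. If some idempotent $q\in\beta G\setminus G$ fixed $(x,y)$, then by Remark~\ref{rem:p-limit-x} both $N(x,U)$ and $N(y,V)$ would belong to the non-principal ultrafilter $q$, whence their intersection would lie in $q$ and be infinite, contradicting that it equals $\{e\}$. Thus $(x,y)$ is not recurrent, contradicting weak $S$-product recurrence of $x$, and so $x$ must be distal.

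The main obstacle is precisely the construction in $(2)\Rightarrow(1)$: a single almost periodic witness such as $ux$ is not enough, since a proximal pair can still be recurrent, which is exactly why weak product recurrence over almost periodic points fails to force distality (Question~\ref{question:AF94-2}). The essential move is to manufacture a genuinely non-almost-periodic $\fps$-recurrent witness whose return set is squeezed inside a central set disjoint from $N(x,U)$; this is where the hypothesis $\cl_{\beta G}K(\beta G)\subset S$ is used, guaranteeing through the identity $h(\fps)=\cl_{\beta G}K(\beta G)$ that such $\fps$-recurrent points are indeed $S$-recurrent.
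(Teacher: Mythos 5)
Your proof is correct and follows essentially the same route as the paper's: the same cycle $(3)\Rightarrow(2)\Rightarrow(1)\Rightarrow(3)$, with Ellis--Namakura plus Theorem~\ref{thm:distal-idem} for $(1)\Rightarrow(3)$, and the contrapositive of $(2)\Rightarrow(1)$ built from a minimal idempotent, Theorem~\ref{thm:central-set-idempotent}, Lemma~\ref{lem:central-lem}, and Lemma~\ref{lem:f-rec-idempotent} to manufacture an $S$-recurrent witness whose return set meets $N(x,U)$ only in $\{e\}$. The only cosmetic difference is that the paper additionally passes through Proposition~\ref{prop:rec-sigma} to realize the witness inside the Bernoulli shift, whereas you use the system from Lemma~\ref{lem:central-lem} directly, which works just as well.
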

\begin{proof}
(1)$\Rightarrow$(2).
Assume that $x$ is a distal point. Given any $S$-recurrent point $y$ in any action $(Y,\beta G)$, there exists $p\in S$ such that $py=y$. Let
$L:=\{q\in S:qy=y\}$. Then $L$ is a nonempty closed subsemigroup of $\beta G$.  
By Ellis-Namakura Theorem (Theorem~\ref{thm:Ellis-Namakura}) there exists an idempotent $u\in L$. 
That is, there exists an idempotent $u\in S$ such that $uy=y$. 
Since $x$ is a distal point, by Theorem \ref{thm:distal-idem}, 
$ux=x$, and then $u(x,y)=(x,y)$.
and then $(x,y)$ is $S$-recurrent
in $(X\times Y,\beta G)$. 

(2)$\Rightarrow$(3). It is clear.

(3)$\Rightarrow$(1).
Assume on the contrary  that $x$ is not distal.
Then by Theorem~\ref{thm:distal-idem}, there exists a minimal idempotent $p\in\beta G$ such that $px\neq x$.
By Remark~\ref{rem:p-limit-x} and the Ramsey property of ultrafilter, there exists a neighborhood $U$ of $x$ such that $\{g\in G\colon gx\in X\setminus U\}\in p$. 
By Theorem~\ref{thm:central-set-idempotent}, $\{g\in G\colon gx\in X\setminus U\}$ is a central set.
Now by Lemma~\ref{lem:central-lem} and Proposition~\ref{prop:rec-sigma},
there exists an $\fps$-recurrent point $y$ with $y\in[1]$ in the Bernoulli shift $(\Sigma_2,G)$ such that 
$N(y,[1])\subset \{g\in G\colon gx\in X\setminus U\} \cup\{e\}$. 
Let $(\Sigma_2,\beta G)$ be the action of $\beta G$ on $\Sigma_2$ induced by $(\Sigma_2,G)$. 
Since $h(\fps)=\cl_{\beta G} K(\beta G)$, by Lemma~\ref{lem:f-rec-idempotent}, Remark~\ref{rem:p-limit-x} and $\cl_{\beta S}{K(\beta G)}\subset S$, $y$ is $S$-recurrent in $(\Sigma_2,\beta G)$. 
As $x$ is weakly $S$-product recurrent, $(x,y)$ is recurrent in $(X\times \Sigma_2,\beta G)$.
But $\{g\in G\colon (gx,gy)\in U\times [1]\}\subset \{g\in G\colon gx\in U\}\cap (\{g\in G\colon gx\in X\setminus U\}\cup\{e\})=\{e\}$, which is a contradiction.
\end{proof}

Applying Theorem~\ref{thm:action-pro-distal}, we prove  Theorem~\ref{thm:main-result-distal-product-rec} as follows.

\begin{proof}[Proof of Theorem~\ref{thm:main-result-distal-product-rec}]
For a Furstenberg family $\calf\subset \calpg$,
if $\calf$ has the Ramsey property, then the hull $h(\calf)$ of $\calf$ is a nonempty closed subset of $\beta G\setminus G$.
If $\calf\supset \fps$, then $h(\calf)\supset h(\fps)=\cl_{\beta G} K(\beta G)$.
Let $(X,G)$ be a $G$-system. Consider the action $\beta G$ of $G$ induced by $(X,G)$.  By Lemma~\ref{lem:f-rec-idempotent} and Remark~\ref{rem:p-limit-x}, the result is an immediate  consequence of Theorem~\ref{thm:action-pro-distal}. 
\end{proof}

\begin{rem}
It should be noticed that Theorem~\ref{thm:main-result-distal-product-rec} holds for the Furstenberg families $\fps$ and $\finf$, and if in addition $G$ is amenable, then it holds for the Furstenberg family $\fpubd$.
\end{rem}

Let $G$ be a countable infinite discrete amenable group and $\{F_n\}$ be a F{\o}lner sequence in $G$. 
Recall that $\fpud^{\{F_n\}}$ is the collection of all subsets of $G$ with positive upper density with respect to $\{F_n\}$.
We know that $h(\fpud^{\{F_n\}})$ is a nonempty closed left ideal of $\beta G$. 
As $\fps\not\subset \fpud^{\{F_n\}}$, we can not apply Theorem~\ref{thm:main-result-distal-product-rec}. 
So we have the following natural question:
\begin{question} 
Is $\fpud^{\{F_n\}}$-product recurrence equivalent to distality? 
\end{question}

\noindent \textbf{Acknowledgment.}
J. Li was partially supported by National Key R\&D Program of China (No.~2024YFA1013601) and  NSF of China (Grant nos.~12222110 and 12171298). Y. Yang was partially supported by STU Scientific Research Initiation Grant (SRIG, No. NTF24025T).

\bibliographystyle{amsplain}

\end{document}